\numberwithin{equation}{section}
\newtheorem{theorem}{Theorem}[section]
\newtheorem{lemma}[theorem]{Lemma}
\newtheorem{proposition}[theorem]{Proposition}
\theoremstyle{remark}
\newtheorem{remark}[theorem]{Remark}
\theoremstyle{definition}
\newtheorem*{main-definition}{Main Definition}
\begin{document}

\title[Elliptic boundary-value problems]{Elliptic boundary-value problems\\ in some distribution spaces\\ of generalized smoothness}


\author[A. Anop]{Anna Anop}

\address{Georg-August-Universit\"at G\"ottingen, Mathematisches Institut, 3-5 Bunsenstr., G\"ottingen, D-37073, Germany}

\email{anna.anop@mathematik.uni-goettingen.de, anop@imath.kiev.ua}


\author[A. Murach]{Aleksandr Murach}

\address{Institute of Mathematics of the National Academy of Sciences of Ukraine, 3 Tereshchen\-kivs'ka, Kyiv, 01024, Ukraine}

\email{murach@imath.kiev.ua}

\subjclass[2010]{35J40, 46E35, 46B70}


\keywords{Sobolev spaces; Besov spaces; generalized smoothness; interpolation functor; elliptic problem; parameter-elliptic problem; Fredholm operator; a priori estimate}


\begin{abstract}
We build a solvability theory of elliptic boundary-value problems in normed Sobolev spaces of generalized smoothness for any integrability exponent $p>1$. The smoothness is given by a number parameter and a supplementary function parameter that varies slowly at infinity. These spaces are obtained by a combination of the methods of the complex interpolation with number parameter between Banach spaces and the quadratic interpolation with function parameter between Hilbert spaces applied to classical Sobolev spaces. We show that the spaces under study admit localization near a smooth boundary and describe their trace spaces in terms of Besov spaces with the same supplementary function parameter. We prove that a general differential elliptic problem induces Fredholm bounded operators on appropriate pairs of the spaces under study. We also find exact sufficient conditions for solutions of the problem to have a prescribed generalized or classical smoothness on a given set and establish corresponding \textit{a~priori} estimates of the solution. These results are specified for parameter-elliptic problems.
\end{abstract}

\maketitle

\section{Introduction}\label{sec1}

In the last two decades, the mathematical literature on function spaces demonstrates a growing interest in distribution spaces of generalized smoothness and their applications (see, e.g., \cite{AlmeidaCaetano11, Baghdasaryan10, CobosKuhn09, DominguezTikhonov23, FarkasLeopold06, HaroskeMoura08, HaroskeLeopoldMouraSkrzypczak23, LoosveldtNicolay19, MikhailetsMurachZinchenko25, Milatovic24, MouraNevesSchneider14}). These spaces characterize the smoothness properties of distributions by means of a function parameter depending on the frequency variables (or by a number sequence representing this function). This allows describing such properties far more finely than the scales of classical distribution spaces, which proved to be very useful in approximation theory \cite{Stepanets05}, theory of stochastic process \cite{Jacob010205}, and theory of partial differential equations (PDEs) \cite{MikhailetsMurach14, NicolaRodino10}. As to boundary-value problems for PDEs, the applications of distribution spaces of generalized smoothness has been restricted only to Hilbert spaces \cite{AnopChepurukhinaMurach21Axioms, AnopDenkMurach21, LosMikhailetsMurach17CPAA1, LosMikhailetsMurach21CPAA10, MikhailetsMurach12BJMA2, MikhailetsMurach14}, which can be investigated with the help of Spectral Theorem applied to a self-adjoint operator (see, e.g., \cite{MikhailetsMurachZinchenko25}). However,  theorems on solvability of various classes of boundary-value problems are well known for non-Hilbert distribution spaces of the H\"older or Sobolev smoothness given by a number (see, e.g., \cite{AgmonDouglisNirenberg59, LadyzhenskajaSolonnikovUraltzeva67, Lunardi95, Triebel83, Triebel95}). Concerning elliptic boundary-value problems, the expediency and prospects of their study in normed spaces of generalized smoothness were discussed by Triebel in \cite[pp. 57--60]{Triebel10}.

The purpose of this paper is to extend the classical theory of solvability of elliptic boundary-value problems in $L_{p}$-Sobolev spaces $H^{s}_{p}$ with $1<p<\infty$ (see, e.g., \cite[Chapter~V]{AgmonDouglisNirenberg59} and \cite[Chapter~5]{Triebel95}) to some Sobolev spaces of generalized smoothness.  The latter spaces form the scale $\{H^{s,\varphi}_{p}\}$ calibrated with the help of a supplementary function parameter $\varphi$ much more finely than the Sobolev scale. The positive function $\varphi(t)$ of $t\geq1$ varies slowly at infinity in the sense of Karamata, which implies that the space $H^{s,\varphi}_{p}$ is located between any spaces $H^{s-\varepsilon}_{p}$ and $H^{s+\varepsilon}_{p}$ with $\varepsilon>0$ on this scale. We study interpolation properties of the scale and show that every space $H^{s,\varphi}_{p}$ is obtained by the complex interpolation between certain spaces $H^{s_{0},\varphi_{0}}_{2}$ and $H^{s_{1}}_{p_{1}}$. Since the scale $\{H^{s,\varphi}_{2}\}$ of Hilbert spaces is obtained by the quadratic interpolation with function parameter between inner product Sobolev spaces \cite[Theorem~1.14]{MikhailetsMurach14}, we use these two interpolation methods to study the spaces $H^{s,\varphi}_{p}$ and elliptic problems in them. It is important for applications and follows from these interpolation properties that the space $H^{s,\varphi}_{p}(\Omega)$ over a bounded Euclidean domain $\Omega$ with a boundary $\Gamma\in C^{\infty}$ admits localization and that the trace operator acts continuously from whole  $H^{s,\varphi}_{p}(\Omega)$ onto the Besov space $B^{s-1/p,\varphi}_{p}(\Gamma)$ of the same supplementary generalized smoothness $\varphi$ whenever $s>1/p$. The distribution space $B^{\sigma,\varphi}_{p}(\Gamma)$ with arbitrary $\sigma\in\mathbb{R}$ also admits localization and has quite similar interpolation properties. These results are discussed in Section~\ref{sec2} of the paper.

Section~\ref{sec3} is devoted to applications of the above distribution spaces of generalized smoothness to an arbitrary differential elliptic boundary-value problem in the domain $\Omega$. We prove that the problem induces a Fredholm bounded operator between the spaces
\begin{equation}\label{b-v-problem-spaces}
H^{s,\varphi}_{p}(\Omega)\quad\mbox{and}\quad H^{s-2q,\varphi}_{p}(\Omega)\times
\prod_{j=1}^{q}B^{s-m_{j}-1/p,\varphi}_{p}(\Gamma)
\end{equation}
whenever $s>\max\{m_{1},\ldots,m_{q}\}+1/p$, with $2q$ being the even order of the elliptic equation, and $m_{1},\ldots,m_{q}$ being the orders of the boundary conditions. Studying solutions to this problem, we find sufficient (and necessary) conditions for the solution to have a prescribed generalized smoothness on a given subset of $\Omega$ and prove a corresponding \textit{a~priori} estimate of the solution. In terms of  the spaces \eqref{b-v-problem-spaces}, we also find exact sufficient  conditions for the generalized solution of the problem to be $\ell$ times continuously differentiable on the given set. The use of the function parameter $\varphi$ allows us to attain the limiting values of the number parameters determining the main smoothness of the right-hand sides of the problem.

Section~\ref{sec4} gives an application of spaces \eqref{b-v-problem-spaces} to a general parameter-elliptic boundary-value problem. We show that this problem sets an isomorphism between these spaces for sufficiently large absolute values of the parameter from the ellipticity angle of the problem. Moreover, the solutions of the problem admit a corresponding two-sided \textit{a~priori} estimate in  parameter-dependent norms with constants that do not depend on the parameter. This result is based on interpolation properties of such norms.   The results can be applied to the study of parabolic initial-boundary-value problems in some anisotropic distribution spaces of generalized smoothness (cf. \cite[Chapter~II]{AgranovichVishik64}).

Note that the solvability theory of elliptic equations and elliptic boundary-value problems in Hilbert distribution spaces of generalized smoothness is set forth in \cite{MikhailetsMurach12BJMA2, MikhailetsMurach14, MilkhailetsMurachChepurukhina23UMJ}. These spaces have various applications to the spectral theory of elliptic differential operators \cite{MikhailetsMurach24ProcEdin, MikhailetsMurachZinchenko25}.

\section{Spaces of generalized smoothness}\label{sec2}

We use distribution spaces whose generalized smoothness is given by a real number and a function parameter
 from a certain class $\Upsilon$.

By definition, $\Upsilon$ consists of all infinitely smooth functions
$\varphi:[1,\infty]\to(0,\infty)$ that satisfy the following two conditions:
\begin{itemize}
\item[(i)] $\varphi$ varies slowly at infinity in the sense of Karamata, i.e. $\varphi(\lambda t)/\varphi(t)\to1$ as $t\to\infty$ for every $\lambda>0$;
\item[(ii)] for each integer $m\geq1$, there exists a number $c_{m}>0$ such that $t^{m}|\varphi^{(m)}(t)|\leq c_{m}\varphi(t)$ whenever $t\geq1$.
\end{itemize}
Note \cite[Section 1.2, formula (1.11)]{Seneta76} that the condition $t\varphi'(t)/\varphi(t)\to0$ as $t\to\infty$ implies~(i). Of course, if $\varphi,\omega\in\Upsilon$ and $r\in\mathbb{R}$, then $\varphi\omega,\varphi/\omega,\varphi^{r}\in\Upsilon$.

An important example of a function of class $\Upsilon$ is given by an arbitrary positive function $\varphi\in C^{\infty}([1,\infty])$ of the form
\begin{equation*}
\varphi(t):=(\log t)^{r_{1}}(\log\log
t)^{r_{2}}\ldots(\underbrace{\log\ldots\log}_{k\;\mbox{\tiny times}}
t)^{r_{k}}\quad\mbox{of}\quad t\gg1,
\end{equation*}
where $1\leq k\in\mathbb{Z}$ and  $r_{1},\ldots,r_{k}\in\mathbb{R}$.

Assume that $1\leq n\in\mathbb{Z}$. Let $s\in\mathbb{R}$, $\varphi\in\Upsilon$, and $p\in(1,\infty)$. The complex linear space $H^{s,\varphi}_{p}(\mathbb{R}^{n})$ is defined to consist of all  distributions $w\in\mathcal{S}'(\mathbb{R}^{n})$ such that the distribution
\begin{equation}\label{w-s-varphi}
w_{s,\varphi}:=\mathcal{F}^{-1}
[\langle\xi\rangle^{s}\varphi(\langle\xi\rangle)(\mathcal{F}w)(\xi)]
\end{equation}
belongs to $L_{p}(\mathbb{R}^{n})$. Here, as usual, $\mathcal{S}'(\mathbb{R}^{n})$ is the linear topological space of all tempered distributions on $\mathbb{R}^{n}$; thus, $\mathcal{S}'(\mathbb{R}^{n})$ is dual to the linear topological Schwartz space $\mathcal{S}(\mathbb{R}^{n})$ of all infinitely smooth rapidly decreasing functions on $\mathbb{R}^{n}$. Moreover, $\mathcal{F}$ and $\mathcal{F}^{-1}$ denote the Fourier transform and its inverse; $\langle\xi\rangle:=(1+|\xi|^{2})^{1/2}$ is the smoothed absolute value of the vector $\xi\in\mathbb{R}^{n}$, and $L_{p}(\mathbb{R}^{n})$ is the normed space of all Lebesgue measurable functions $h:\mathbb{R}^{n}\to\mathbb{C}$ such that $|h|^{p}$ is integrable over $\mathbb{R}^{n}$. The space $H^{s,\varphi}_{p}(\mathbb{R}^{n})$ is endowed with the norm
\begin{equation*}
\|w,H^{s,\varphi}_{p}(\mathbb{R}^{n})\|:=
\|w_{s,\varphi},L_{p}(\mathbb{R}^{n})\|=
\Biggl(\;\,\int\limits_{\mathbb{R}^{n}}
|w_{s,\varphi}(x)|^{p}dx\Biggr)^{1/p}.
\end{equation*}
This space is well defined because the function $\langle\xi\rangle^{s}\varphi(\langle\xi\rangle)$ of $\xi\in\mathbb{R}^{n}$ is a pointwise multiplier on $\mathcal{S}'(\mathbb{R}^{n})$ by condition~(ii) and in view of the property $\varphi(t)\leq t$ whenever $t\gg1$, which follows from condition~(i) \cite[Section 1.5, property~$1^{\circ}$]{Seneta76}. We consider complex linear spaces throughout the paper.

The space $H^{s,\varphi}_{p}(\mathbb{R}^{n})$ belongs to the classes of Sobolev spaces of generalized smoothness introduced and investigated by Volevich and Paneach \cite[\S~13]{VolevichPaneah65}, Schechter \cite[Section~4]{Schechter67}, Goldman \cite{Goldman76}, Triebel \cite[Section~5]{Triebel77III}, Merucci \cite[Section~4]{Merucci84}, and others. This space is complete and separable and is continuously embedded in $\mathcal{S}'(\mathbb{R}^{n})$, with $\mathcal{S}(\mathbb{R}^{n})$ being  continuously and densely embedded in $H^{s,\varphi}_{p}(\mathbb{R}^{n})$. This follows from the analogous properties of $L_{p}(\mathbb{R}^{n})$ in view of the evident fact that the mapping $w\mapsto w_{s,\varphi}$ sets an isometric isomorphism between $H^{s,\varphi}_{p}(\mathbb{R}^{n})$ and $L_{p}(\mathbb{R}^{n})$. Hence, the set $C^{\infty}_{0}(\mathbb{R}^{n})$
of infinitely smooth compactly supported functions on $\mathbb{R}^{n}$ is dense in $H^{s,\varphi}_{p}(\mathbb{R}^{n})$.

If $\varphi(t)=1$ whenever $t\geq1$, then $H^{s,\varphi}_{p}(\mathbb{R}^{n})$ is the Sobolev space $H^{s}_{p}(\mathbb{R}^{n})$ with smoothness index $s$ and integrability exponent $p$. Generally, we have the dense continuous embeddings
\begin{equation}\label{ebbeddins-H}
H^{s+\varepsilon}_{p}(\mathbb{R}^{n})\hookrightarrow
H^{s,\varphi}_{p}(\mathbb{R}^{n})\hookrightarrow H^{s-\varepsilon}_{p}(\mathbb{R}^{n})\quad\mbox{for every}\;\;
\varepsilon>0.
\end{equation}
Indeed, it follows from property~(i) that $t^{-\varepsilon/2}\varphi(t)\to0$ and $t^{\varepsilon/2}\varphi(t)\to\infty$ as $t\to\infty$ \cite[Section 1.5, property~$1^{\circ}$]{Seneta76}; hence, the functions $t^{s}\varphi(t)/t^{s+\varepsilon}$ and $t^{s-\varepsilon}/t^{s}\varphi(t)$ of $t\geq1$ are integrable over $[1,\infty)$ with respect to $dt/t$, which implies \eqref{ebbeddins-H} by \cite[Subsection~4.1, Proposition~6]{Merucci84}.

These embeddings show that the function parameter $\varphi$ defines a supplementary generalized smoothness relative to the main smoothness $s$. We say that $s$ and $\varphi$ are the main and supplementary smoothness indexes, resp. Briefly saying, $\varphi$ refines the main smoothness. Therefore, the class
\begin{equation}\label{refined-H-scale}
\{H^{s,\varphi}_{p}(\mathbb{R}^{n}):s\in\mathbb{R},\varphi\in\Upsilon\}
\end{equation}
is naturally called the refined Sobolev scale (over $\mathbb{R}^{n}$)
with integrability exponent $p$. In the $p=2$ case of Hilbert spaces, such a scale was introduced by Mikhailets and Murach \cite{MikhailetsMurach05UMJ5} and applied to elliptic operators and elliptic boundary-value problems \cite{MikhailetsMurach06UMJ3, MikhailetsMurach07UMJ5, MikhailetsMurach12BJMA2, MikhailetsMurach14, Murach07UMJ6}. Various distribution spaces of generalized smoothness given by a slowly varying function were investigated by Haroske and Moura \cite{HaroskeMoura04}, specifically Besov spaces over $\mathbb{R}^{n-1}$ used below.

Let $\Omega$ be a nonempty open subset of $\mathbb{R}^{n}$, and introduce a version of scale \eqref{refined-H-scale} for~$\Omega$. By definition, the linear space $H^{s,\varphi}_{p}(\Omega)$ consists of the restrictions of all distributions $w\in H^{s,\varphi}_{p}(\mathbb{R}^{n})$ to $\Omega$ and is endowed with the norm
\begin{equation}\label{Sobolev-norm-domain}
\|u,H^{s,\varphi}_{p}(\Omega)\|:=
\inf\bigl\{\|w,H^{s,\varphi}_{p}(\mathbb{R}^{n})\|:
w\in H^{s,\varphi}_{p}(\mathbb{R}^{n}),\;w=u\;\mbox{in}\;\Omega\bigr\},
\end{equation}
where $u\in H^{s,\varphi}_{p}(\Omega)$. This space is complete and separable, is continuously embedded in $\mathcal{S}'(\Omega)$,
and $C^{\infty}_{0}(\overline{\Omega})$ is dense in it, which follows from the above-mentioned properties of $H^{s,\varphi}_{p}(\mathbb{R}^{n})$. Here, $\mathcal{S}'(\Omega)$ is the linear topological space of the restrictions of all distributions $w\in \mathcal{S}'(\mathbb{R}^{n})$ to $\Omega$, and
$C^{\infty}_{0}(\overline{\Omega})$ denotes the set of the restrictions of all functions $w\in C^{\infty}_{0}(\mathbb{R}^{n})$ to the closure $\overline{\Omega}$ of $\Omega$.

If $\varphi(\cdot)\equiv1$, then $H^{s,\varphi}_{p}(\Omega)$ becomes the Sobolev space $H^{s}_{p}(\Omega)$. The dense continuous embeddings \eqref{ebbeddins-H} remain valid if we replace $\mathbb{R}^{n}$ with $\Omega$. If $\Omega$ is bounded, these embeddings are compact, which follows from the known fact that the inequality $\ell<s$ implies the compact embedding $H^{s}_{p}(\Omega)\hookrightarrow H^{\ell}_{p}(\Omega)$ of Sobolev spaces.

Of course, $H^{s,\varphi}_{p}(\Omega)$ is the factor space of $H^{s,\varphi}_{p}(\mathbb{R}^{n})$ by the subspace
\begin{equation*}
\{w\in H^{s,\varphi}_{p}(\mathbb{R}^{n}):
\mathrm{supp}\,w\subset\mathbb{R}^{n}\setminus\Omega\}.
\end{equation*}
Hence, each $H^{s,\varphi}_{2}(\Omega)$ is a Hilbert space. It is important that this space is obtained by the quadratic interpolation (with an appropriate function parameter) between Sobolev spaces provided that
\begin{equation}\label{Omega-assumption}
\Omega=\mathbb{R}^{n}\;\;\mbox{or}\;\;
\Omega\;\,\mbox{is an open half-space}\;\;
\mbox{or}\;\;\Omega\;\,\mbox{is a bounded Lipschitz domain}
\end{equation}
(the last assumption is considered in the $n\geq2$ case).

\begin{proposition}\label{prop-quadratic-interp}
Let $s,\varepsilon,\delta\in\mathbb{R}$, $\varepsilon,\delta>0$, and $\varphi\in\Upsilon$. Define the interpolation function parameter as follows:
\begin{equation}\label{quadratic-interp-parameter}
\psi(t):=\left\{
\begin{array}{ll}
t^{\varepsilon/(\varepsilon+\delta)}\varphi(t^{1/(\varepsilon+\delta)})& \mbox{if}\;\;t\geq1 \\
1&\mbox{if}\;\;0<t<1.
\end{array}
\right.
\end{equation}
Suppose that $\Omega$ satisfies \eqref{Omega-assumption}. Then
\begin{equation}\label{quadratic-interp-Sobolev}
H^{s,\varphi}_{2}(\Omega)=
(H^{s-\varepsilon}_{2}(\Omega),H^{s+\delta}_{2}(\Omega))_{\psi}
\end{equation}
up to equivalence of norms.
\end{proposition}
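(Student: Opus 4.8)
The plan is to prove \eqref{quadratic-interp-Sobolev} first in the case $\Omega=\mathbb{R}^{n}$, where it reduces to the known quadratic-interpolation description of the refined Sobolev scale, and then to transfer it to a half-space and to a bounded Lipschitz domain by means of a universal linear extension operator together with the interpolation theorem for retracts.

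For $\Omega=\mathbb{R}^{n}$ I would argue as follows. Since $\varepsilon/(\varepsilon+\delta)\in(0,1)$ and $\varphi$ varies slowly at infinity, the function $\psi$ defined by \eqref{quadratic-interp-parameter} is regularly varying at infinity of index $\varepsilon/(\varepsilon+\delta)$; hence $\psi$ is (equivalent to) an admissible function parameter for the quadratic interpolation between Hilbert spaces. A direct substitution gives $\langle\xi\rangle^{s-\varepsilon}\psi(\langle\xi\rangle^{\varepsilon+\delta})=\langle\xi\rangle^{s}\varphi(\langle\xi\rangle)$ for every $\xi\in\mathbb{R}^{n}$, which is exactly the Fourier multiplier occurring in \eqref{w-s-varphi}. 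Consequently \cite[Theorem~1.14]{MikhailetsMurach14}, applied with $s_{0}=s-\varepsilon$ and $s_{1}=s+\delta$, yields
\begin{equation*}
H^{s,\varphi}_{2}(\mathbb{R}^{n})=
\bigl(H^{s-\varepsilon}_{2}(\mathbb{R}^{n}),H^{s+\delta}_{2}(\mathbb{R}^{n})\bigr)_{\psi}
\end{equation*}
with equivalence of norms, which is \eqref{quadratic-interp-Sobolev} for $\Omega=\mathbb{R}^{n}$.

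Now let $\Omega$ be an open half-space or a bounded Lipschitz domain. The restriction operator $R_{\Omega}$, sending $w$ to $w$ in $\Omega$, acts continuously (with norm $\le 1$) from $H^{t}_{2}(\mathbb{R}^{n})$ onto $H^{t}_{2}(\Omega)$ for every $t\in\mathbb{R}$, directly by the definition \eqref{Sobolev-norm-domain}. On the other hand, there exists a linear \emph{universal} extension operator $T_{\Omega}$ (e.g.\ Rychkov's operator) that is bounded from $H^{t}_{2}(\Omega)$ into $H^{t}_{2}(\mathbb{R}^{n})$ for \emph{every} $t\in\mathbb{R}$ simultaneously and satisfies $R_{\Omega}T_{\Omega}=I$ on $H^{t}_{2}(\Omega)$. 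Taking $t=s-\varepsilon$ and $t=s+\delta$, we see that $(R_{\Omega},T_{\Omega})$ is a retraction--coretraction pair for the Banach couples $\bigl(H^{s-\varepsilon}_{2}(\mathbb{R}^{n}),H^{s+\delta}_{2}(\mathbb{R}^{n})\bigr)$ and $\bigl(H^{s-\varepsilon}_{2}(\Omega),H^{s+\delta}_{2}(\Omega)\bigr)$. Since the quadratic interpolation $(\cdot,\cdot)_{\psi}$ is an interpolation functor, the retract theorem shows that $R_{\Omega}$ maps $\bigl(H^{s-\varepsilon}_{2}(\mathbb{R}^{n}),H^{s+\delta}_{2}(\mathbb{R}^{n})\bigr)_{\psi}$ onto $\bigl(H^{s-\varepsilon}_{2}(\Omega),H^{s+\delta}_{2}(\Omega)\bigr)_{\psi}$ and that the norm in the latter space is equivalent to the infimum norm $u\mapsto\inf\{\|w\|:R_{\Omega}w=u\}$, where $\|\cdot\|$ is the norm of the interpolation space over $\mathbb{R}^{n}$ and the infimum is taken over that space. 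Combining this with the equality already proved for $\mathbb{R}^{n}$, the space $\bigl(H^{s-\varepsilon}_{2}(\Omega),H^{s+\delta}_{2}(\Omega)\bigr)_{\psi}$ coincides, as a set, with $R_{\Omega}\bigl(H^{s,\varphi}_{2}(\mathbb{R}^{n})\bigr)=H^{s,\varphi}_{2}(\Omega)$, and its interpolation norm is equivalent to the infimum norm \eqref{Sobolev-norm-domain} defining $H^{s,\varphi}_{2}(\Omega)$; this gives \eqref{quadratic-interp-Sobolev}.

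The step I expect to be the main obstacle is securing the extension operator $T_{\Omega}$: the retract argument forces it to be bounded on $H^{t}_{2}$ for \emph{all} real $t$, in particular for the possibly strongly negative endpoint $t=s-\varepsilon$, whereas classical reflection-type extensions are tailored only to a bounded range of $t$. The existence of a single universal extension operator for a bounded Lipschitz domain (and for a half-space) is precisely what makes the passage from $\mathbb{R}^{n}$ to $\Omega$ go through; the remaining ingredients --- the admissibility of $\psi$ and the retract property of the quadratic interpolation functor --- are routine consequences of the machinery recalled in Section~\ref{sec2} and in \cite{MikhailetsMurach14}.
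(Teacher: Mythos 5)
Your proposal is correct. The paper does not prove Proposition~\ref{prop-quadratic-interp} itself but cites it from \cite[Theorem~5.1]{MikhailetsMurach15ResMath1} (and \cite[Theorems 1.14 and~3.2]{MikhailetsMurach14} for $\Omega=\mathbb{R}^{n}$ or a smooth bounded domain); your reconstruction --- the Fourier-multiplier identity $\langle\xi\rangle^{s-\varepsilon}\psi(\langle\xi\rangle^{\varepsilon+\delta})=\langle\xi\rangle^{s}\varphi(\langle\xi\rangle)$ giving the case $\Omega=\mathbb{R}^{n}$ via the spectral calculus of the operator $J$ with symbol $\langle\xi\rangle^{\varepsilon+\delta}$, followed by the Rychkov extension operator and the retract argument for the functor $(\cdot,\cdot)_{\psi}$ --- is exactly the route of the cited reference and matches the machinery the paper itself deploys (Rychkov's operator and retraction/coretraction pairs) in the proofs of Theorems~\ref{th-C-interp-Sobolev}, \ref{th-C-interp-Sobolev-got}, and \ref{th-C-interp-Besov}. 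The one ingredient you flag as a possible obstacle, a single extension operator bounded on $H^{t}_{2}(\Omega)$ for all real $t$, is precisely what \cite[Theorem~4.1]{Rychkov99} supplies, and the admissibility of $\psi$ follows from \cite[Theorem~1.11]{MikhailetsMurach14} as recalled in Appendix~\ref{appendix}.
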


Here, $(E_{0},E_{1})_{\psi}$ denotes the result of the quadratic interpolation with the function parameter $\psi$ between separable Hilbert spaces $E_{0}$ and $E_{1}$ satisfying the dense continuous embedding $E_{1}\hookrightarrow E_{0}$. The definition and properties of this interpolation are given, e.g., in \cite[Section~1.1]{MikhailetsMurach14}. We will recall this notion in Appendix~\ref{appendix}. Proposition~\ref{prop-quadratic-interp} is contained in \cite[Theorem~5.1]{MikhailetsMurach15ResMath1} (see also \cite[Theorems 1.14 and~3.2]{MikhailetsMurach14} in the case where $\Omega=\mathbb{R}^{n}$ or when the bounded domain $\Omega$ has infinitely smooth boundary).

If $p\neq2$, then the refined Sobolev scale
with integrability exponent $p$ is obtained by complex interpolation between Hilbert spaces $H^{s_0,\varphi_0}_{2}(\Omega)$ and Sobolev spaces $H^{s_1}_{p_1}(\Omega)$. Moreover, the collection of these scales is stable with respect to this interpolation. Let us formulate these properties in the form of two theorems. As usual, $[E_{0},E_{1}]_{\theta}$ denotes the result of the complex interpolation with the number parameter $\theta\in(0,1)$ between Banach spaces $E_{0}$ and $E_{1}$ continuously embedded in a Hausdorff topological linear space (see., e.g., \cite[Section~1.9]{Triebel95}). Such a pair of the spaces is called an interpolation pair.

\begin{theorem}\label{th-C-interp-Sobolev}
Let $s_{0},s_{1}\in\mathbb{R}$, $\varphi_0,\varphi_1\in\Upsilon$, $p_0,p_1\in(1,\infty)$, and $\theta\in(0,1)$. Define numbers $s$, $p$, and a function $\varphi\in\Upsilon$ by the formulas
\begin{equation}\label{C-interp-parameters}
s=(1-\theta)s_{0}+\theta s_{1},\quad
\frac{1}{p}=\frac{1-\theta}{p_0}+\frac{\theta}{p_1},\quad
\varphi(t)\equiv\varphi_{0}^{1-\theta}(t)\varphi_{1}^{\theta}(t).
\end{equation}
Suppose that $\Omega$ satisfies \eqref{Omega-assumption}. Then
\begin{equation}\label{C-interp-Sobolev}
[H^{s_0,\varphi_0}_{p_{0}}(\Omega),
H^{s_1,\varphi_1}_{p_{1}}(\Omega)]_{\theta}=
H^{s,\varphi}_{p}(\Omega)
\end{equation}
up to equivalence of the norms.
\end{theorem}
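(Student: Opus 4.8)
The plan is to reduce the identity \eqref{C-interp-Sobolev} to the case $\Omega=\mathbb{R}^{n}$ and then to transplant the classical Calder\'on formula $[L_{p_{0}}(\mathbb{R}^{n}),L_{p_{1}}(\mathbb{R}^{n})]_{\theta}=L_{p}(\mathbb{R}^{n})$ (with $1/p=(1-\theta)/p_{0}+\theta/p_{1}$) to the refined Sobolev scale by means of an admissible analytic family of Fourier-multiplier operators. For the first step I would use that, for every $\Omega$ as in \eqref{Omega-assumption}, there is a universal (Rychkov-type) linear extension operator $\Theta$ bounded from $H^{\sigma,\omega}_{q}(\Omega)$ to $H^{\sigma,\omega}_{q}(\mathbb{R}^{n})$ for all $\sigma\in\mathbb{R}$, $\omega\in\Upsilon$, $q\in(1,\infty)$, satisfying $R_{\Omega}\Theta=\mathrm{id}$ with $R_{\Omega}$ the restriction operator. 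Then $\bigl(H^{s_{0},\varphi_{0}}_{p_{0}}(\Omega),H^{s_{1},\varphi_{1}}_{p_{1}}(\Omega)\bigr)$ is a retract of $\bigl(H^{s_{0},\varphi_{0}}_{p_{0}}(\mathbb{R}^{n}),H^{s_{1},\varphi_{1}}_{p_{1}}(\mathbb{R}^{n})\bigr)$; since complex interpolation commutes with retracts and the infimum norm that $R_{\Omega}$ induces on $R_{\Omega}\bigl(H^{s,\varphi}_{p}(\mathbb{R}^{n})\bigr)=H^{s,\varphi}_{p}(\Omega)$ is precisely \eqref{Sobolev-norm-domain}, the case of a general $\Omega$ follows from the case $\Omega=\mathbb{R}^{n}$.

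Over $\mathbb{R}^{n}$ I would exploit that $J^{\sigma,\omega}:=\mathcal{F}^{-1}\langle\xi\rangle^{\sigma}\omega(\langle\xi\rangle)\mathcal{F}$ is an isometric isomorphism of $H^{\sigma,\omega}_{q}(\mathbb{R}^{n})$ onto $L_{q}(\mathbb{R}^{n})$ with inverse $J^{-\sigma,1/\omega}$, and that $\mathcal{S}(\mathbb{R}^{n})$ is dense in each endpoint space and hence in their interpolation space. It then suffices to prove that $J^{s,\varphi}$ maps $[H^{s_{0},\varphi_{0}}_{p_{0}}(\mathbb{R}^{n}),H^{s_{1},\varphi_{1}}_{p_{1}}(\mathbb{R}^{n})]_{\theta}$ isomorphically onto $[L_{p_{0}}(\mathbb{R}^{n}),L_{p_{1}}(\mathbb{R}^{n})]_{\theta}=L_{p}(\mathbb{R}^{n})$. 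To this end, for $z$ in the closed strip $0\le\mathrm{Re}\,z\le1$ set $\sigma(z):=(1-z)s_{0}+zs_{1}$ and $\omega_{z}(t):=\varphi_{0}(t)^{1-z}\varphi_{1}(t)^{z}$, and consider the analytic families of Fourier multipliers $W_{z}:=J^{\sigma(z),\omega_{z}}$ and $V_{z}:=J^{-\sigma(z),1/\omega_{z}}$, defined initially on $\mathcal{S}(\mathbb{R}^{n})$, where analyticity and continuity up to the boundary are immediate from the symbol formulas. A short symbol computation gives $W_{\theta}=J^{s,\varphi}$ and $V_{\theta}=J^{-s,1/\varphi}$, while on the boundary lines $W_{it}=M_{-it}J^{s_{0},\varphi_{0}}$, $W_{1+it}=M_{-it}J^{s_{1},\varphi_{1}}$, $V_{it}=J^{-s_{0},1/\varphi_{0}}M_{it}$, $V_{1+it}=J^{-s_{1},1/\varphi_{1}}M_{it}$, with $M_{it}$ the Fourier multiplier of symbol $a(\xi)^{it}$ and $a(\xi):=\langle\xi\rangle^{s_{0}-s_{1}}\varphi_{0}(\langle\xi\rangle)/\varphi_{1}(\langle\xi\rangle)>0$.

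The crux, and the step I expect to be the main obstacle, is the estimate $\|M_{\pm it}\|_{L_{q}(\mathbb{R}^{n})\to L_{q}(\mathbb{R}^{n})}\le c\,(1+|t|)^{N}$ for $1<q<\infty$, where $N:=\lfloor n/2\rfloor+1$: here $a>0$ gives $|a^{it}|\equiv1$, and condition~(ii) for $\varphi_{0},\varphi_{1}$, together with the elementary bounds $|\partial^{\alpha}_{\xi}\log\langle\xi\rangle|\le c_{\alpha}\langle\xi\rangle^{-|\alpha|}$ for $|\alpha|\ge1$, yields $|\partial^{\alpha}_{\xi}\log a(\xi)|\le c_{\alpha}\langle\xi\rangle^{-|\alpha|}$ for $1\le|\alpha|\le N$; differentiating $a^{it}=\exp(it\log a)$ by the chain rule then gives $|\partial^{\alpha}_{\xi}a^{it}(\xi)|\le c_{\alpha}(1+|t|)^{|\alpha|}\langle\xi\rangle^{-|\alpha|}$, so the H\"ormander--Mikhlin multiplier theorem applies. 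Granting this, $\{W_{z}\}$ and $\{V_{z}\}$ are admissible analytic families whose boundary operators map boundedly between the relevant endpoint spaces with norms $O((1+|t|)^{N})$; hence by the abstract Stein interpolation theorem---equivalently, by inserting the regularizing factor $e^{\delta(z^{2}-\theta^{2})}$ into the competing analytic functions in Calder\'on's space, which only costs a $\delta$-dependent constant and so is harmless for equivalence of norms---$W_{\theta}=J^{s,\varphi}$ is bounded from $[H^{s_{0},\varphi_{0}}_{p_{0}}(\mathbb{R}^{n}),H^{s_{1},\varphi_{1}}_{p_{1}}(\mathbb{R}^{n})]_{\theta}$ into $L_{p}(\mathbb{R}^{n})$, and $V_{\theta}=J^{-s,1/\varphi}$ is bounded in the opposite direction. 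Since $J^{s,\varphi}$ and $J^{-s,1/\varphi}$ are mutually inverse isometries between $H^{s,\varphi}_{p}(\mathbb{R}^{n})$ and $L_{p}(\mathbb{R}^{n})$, these two bounds give \eqref{C-interp-Sobolev} over $\mathbb{R}^{n}$ up to equivalence of norms, and the first step then finishes the proof. The reduction to $\mathbb{R}^{n}$ and the operator set-up are routine given the isometries $J^{\sigma,\omega}$ and standard interpolation facts; the polynomial-in-$t$ symbol estimate for $a^{it}$, resting on condition~(ii), is the only genuinely technical point.
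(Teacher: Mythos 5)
Your argument over $\mathbb{R}^{n}$ is sound and is a genuinely different route from the paper's: the paper simply cites Triebel's interpolation theorem for the spaces $F^{g(\cdot)}_{p,2}(\mathbb{R}^{n})$, whereas you reprove that case directly by Stein interpolation of the analytic multiplier family $J^{\sigma(z),\omega_{z}}$. The step you single out as the main obstacle is in fact fine: condition (ii) on $\Upsilon$ does give $|\partial^{\alpha}_{\xi}\log a(\xi)|\leq c_{\alpha}\langle\xi\rangle^{-|\alpha|}$, hence the Mikhlin bound $\|M_{\pm it}\|_{L_{q}\to L_{q}}\lesssim(1+|t|)^{N}$, and the polynomial growth is admissible for the abstract Stein theorem. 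This part makes the proof more self-contained than the paper's.

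The genuine gap is in the step you call routine. You assume that a Rychkov-type extension operator $\Theta$ is bounded from $H^{\sigma,\omega}_{q}(\Omega)$ to $H^{\sigma,\omega}_{q}(\mathbb{R}^{n})$ for \emph{all} $\sigma\in\mathbb{R}$, $\omega\in\Upsilon$, $q\in(1,\infty)$; you need this at the two endpoints $H^{s_{0},\varphi_{0}}_{p_{0}}(\Omega)$ and $H^{s_{1},\varphi_{1}}_{p_{1}}(\Omega)$ to set up the retract. But Rychkov's theorem covers only the classical Besov and Triebel--Lizorkin scales; its extension to the refined scale over $\Omega$ is precisely what has to be proved, and the natural way to prove it is by expressing $H^{\sigma,\omega}_{q}(\Omega)$ as an interpolation space of spaces on which $\Theta$ is already known to be bounded --- which is the very statement \eqref{C-interp-Sobolev} under discussion. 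As stated, your reduction is therefore circular. The paper breaks the circle by a two-stage bootstrap: Proposition~\ref{prop-quadratic-interp} (quadratic interpolation with function parameter between Hilbert Sobolev spaces) first gives boundedness of $T$ on $H^{\sigma,\omega}_{2}(\Omega)$; this yields the mixed formula \eqref{C-interp-Sobolev-got} of Theorem~\ref{th-C-interp-Sobolev-got}, which in turn gives boundedness of $T$ on $H^{\sigma,\omega}_{q}(\Omega)$ for every $q$, and only then does the retract argument for \eqref{C-interp-Sobolev} go through. (Note also that the one-sided embedding $H^{s,\varphi}_{p}(\Omega)\hookrightarrow[H^{s_{0},\varphi_{0}}_{p_{0}}(\Omega),H^{s_{1},\varphi_{1}}_{p_{1}}(\Omega)]_{\theta}$ needs only the restriction operator and is free; it is the reverse embedding that requires the extension operator.) Your proof can be repaired by inserting exactly this bootstrap between your $\mathbb{R}^{n}$ argument and your reduction step.
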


\begin{theorem}\label{th-C-interp-Sobolev-got}
Let $s\in\mathbb{R}$, $\varphi\in\Upsilon$, and $p\in(1,\infty)$ with $p\neq2$. We arbitrarily chose a real number $p_{1}$ such that $1<p_{1}<p<2$ or $2<p<p_{1}<\infty$ and define a number $\theta\in(0,1)$ by the formula
\begin{equation}\label{p-2-interp}
\frac{1}{p}=\frac{1-\theta}{2}+\frac{\theta}{p_1}.
\end{equation}
Choosing one of two real numbers $s_{0}$ and $s_{1}$ arbitrarily, we define the second number by the formula $s=(1-\theta)s_{0}+\theta s_{1}$. Finally, define a function parameter $\varphi_{0}\in\Upsilon$ by the formula $\varphi_{0}(t):=\varphi^{1/(1-\theta)}(t)$ whenever $t\geq1$. Suppose that $\Omega$ satisfies \eqref{Omega-assumption}. Then
\begin{equation}\label{C-interp-Sobolev-got}
H^{s,\varphi}_{p}(\Omega)=[H^{s_0,\varphi_0}_{2}(\Omega),
H^{s_1}_{p_1}(\Omega)]_{\theta}
\end{equation}
up to equivalence of the norms.
\end{theorem}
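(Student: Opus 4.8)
The plan is to obtain Theorem~\ref{th-C-interp-Sobolev-got} as the special case of Theorem~\ref{th-C-interp-Sobolev} in which the second integrability exponent equals $2$ and the second function parameter is trivial. Concretely, in Theorem~\ref{th-C-interp-Sobolev} I would set $p_{0}:=2$, keep $p_{1}$, $s_{0}$, $s_{1}$, and $\theta$ as in the statement of Theorem~\ref{th-C-interp-Sobolev-got}, take $\varphi_{0}(t):=\varphi^{1/(1-\theta)}(t)$ for $t\geq1$, and put $\varphi_{1}(\cdot)\equiv1$. With these data Theorem~\ref{th-C-interp-Sobolev} is applicable because $\Omega$ satisfies \eqref{Omega-assumption}, and its conclusion \eqref{C-interp-Sobolev} will specialize to \eqref{C-interp-Sobolev-got}.

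First I would check that these are admissible data for Theorem~\ref{th-C-interp-Sobolev}. The constant function $\varphi_{1}\equiv1$ lies in $\Upsilon$, and $\varphi_{0}=\varphi^{1/(1-\theta)}\in\Upsilon$ since $\Upsilon$ is closed under taking arbitrary real powers of its elements. Moreover, the chosen $p_{1}$ satisfies $1<p_{1}<p<2$ or $2<p<p_{1}<\infty$, so in either case $1/p$ lies strictly between $1/2$ and $1/p_{1}$; hence the number $\theta$ determined by \eqref{p-2-interp} indeed belongs to $(0,1)$, which is all that Theorem~\ref{th-C-interp-Sobolev} requires of $\theta$. This range restriction on $p_{1}$ is precisely what makes the construction legitimate.

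Next I would verify that the parameters produced by \eqref{C-interp-parameters} with this choice reproduce the data of Theorem~\ref{th-C-interp-Sobolev-got}. The relation for the main smoothness reads $s=(1-\theta)s_{0}+\theta s_{1}$, which is exactly the defining relation between $s_{0}$ and $s_{1}$ in the hypothesis. The relation for the integrability exponent becomes $1/p=(1-\theta)/2+\theta/p_{1}$, i.e.\ \eqref{p-2-interp}. Finally, the function parameter is $\varphi_{0}^{1-\theta}\varphi_{1}^{\theta}=(\varphi^{1/(1-\theta)})^{1-\theta}\cdot 1=\varphi$, as required.

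Since $\varphi_{1}(\cdot)\equiv1$ yields $H^{s_{1},\varphi_{1}}_{p_{1}}(\Omega)=H^{s_{1}}_{p_{1}}(\Omega)$, the identity \eqref{C-interp-Sobolev} furnished by Theorem~\ref{th-C-interp-Sobolev} becomes $[H^{s_{0},\varphi_{0}}_{2}(\Omega),H^{s_{1}}_{p_{1}}(\Omega)]_{\theta}=H^{s,\varphi}_{p}(\Omega)$ up to equivalence of norms, which is \eqref{C-interp-Sobolev-got}. I do not expect a genuine obstacle here: the entire analytic content resides in Theorem~\ref{th-C-interp-Sobolev}, and what remains is the bookkeeping above, together with the two elementary verifications that $\varphi_{0}\in\Upsilon$ and that $\theta\in(0,1)$.
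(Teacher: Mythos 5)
Your parameter bookkeeping is correct, and the authors themselves remark that Theorem~\ref{th-C-interp-Sobolev-got} ``is a direct consequence'' of Theorem~\ref{th-C-interp-Sobolev}: taking $p_{0}=2$, $\varphi_{1}(\cdot)\equiv1$, and $\varphi_{0}=\varphi^{1/(1-\theta)}$ in \eqref{C-interp-parameters} does reproduce the data of Theorem~\ref{th-C-interp-Sobolev-got}, and $\theta\in(0,1)$ for the admissible $p_{1}$. The difficulty is that, in this paper, your reduction is circular: the proof of Theorem~\ref{th-C-interp-Sobolev} in the half-space and bounded Lipschitz-domain cases of \eqref{Omega-assumption} is itself built on formula \eqref{C-interp-Sobolev-got}, which is exactly the statement you are asked to prove. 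This is why the paper gives a common proof of the two theorems and establishes \eqref{C-interp-Sobolev-got} first.

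The reason the order cannot be reversed is the extension operator. Over $\mathbb{R}^{n}$ both interpolation formulas follow from Triebel's result, and there your specialization is harmless. For a domain, the restriction operator gives the embedding of $H^{s,\varphi}_{p}(\Omega)$ into the interpolation space for free, but the reverse embedding requires a linear extension operator $T$ bounded on both endpoint spaces. Rychkov's operator is known to be bounded on the classical spaces $H^{s_1}_{p_1}(\Omega)$, and Proposition~\ref{prop-quadratic-interp} (quadratic interpolation with function parameter) shows it is bounded on $H^{s_0,\varphi_0}_{2}(\Omega)$ --- but only because that endpoint is a Hilbert space. For a general endpoint $H^{s_0,\varphi_0}_{p_0}(\Omega)$ with $p_{0}\neq2$ and $\varphi_{0}\not\equiv1$, the boundedness of $T$ is obtained in the paper only \emph{after} \eqref{C-interp-Sobolev-got} is available. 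So the special case $p_{0}=2$ is precisely the one that must be proved directly; it carries the analytic content rather than being a corollary. To repair your argument you would need an independent proof either of the boundedness of an extension operator on $H^{s_0,\varphi_0}_{p_0}(\Omega)$ for all $p_{0}$ and $\varphi_{0}$, or of Theorem~\ref{th-C-interp-Sobolev} on domains, neither of which is available at the point where Theorem~\ref{th-C-interp-Sobolev-got} is needed.
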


Note that the interpolation formulas \eqref{quadratic-interp-Sobolev}, \eqref{C-interp-Sobolev}, and \eqref{C-interp-Sobolev-got} are especially useful when we study bounded linear operators on the corresponding spaces because the boundedness of such operators is preserved under the interpolation between these spaces. Namely, if $[E_{0},E_{1}]$ and $[G_{0},G_{1}]$ are interpolation pairs of Banach spaces and if a linear mapping $T$ is a bounded operator from $E_{j}$ to $G_{j}$ for each $j\in\{0,1\}$, then $T$ is a bounded operator from $[E_{0},E_{1}]_{\theta}$ to $[G_{0},G_{1}]_{\theta}$ whenever $0<\theta<1$. The quadratic interpolation between Hilbert spaces has an analogous property.

\begin{proof}[Proof of Theorems~$\ref{th-C-interp-Sobolev}$ and~$\ref{th-C-interp-Sobolev-got}$.] Of course, the second theorem is a direct consequence of the first one. However, our proof of Theorem~\ref{th-C-interp-Sobolev} will rely on formula \eqref{C-interp-Sobolev-got}. We will therefore give a common proof of these two theorems.

If $\Omega=\mathbb{R}^{n}$, then Theorem~\ref{th-C-interp-Sobolev} is a special case of Triebel's result \cite[Theorem~4.2/2]{Triebel77III} in view of \cite[Remark~5.1]{Triebel77III}. Specifically, this result describes the complex interpolation between the general Triebel--Lizorkin spaces $F^{g(\cdot)}_{p,q}(\mathbb{R}^{n})$ introduced in \cite[Definition 2.3/4(ii)]{Triebel77III}, whereas the remark explains that $F^{g(\cdot)}_{p,2}(\mathbb{R}^{n})$ coincides with the corresponding Sobolev space of generalized smoothness. The class $M$ of function parameters $g(\cdot)$ used by Triebel \cite[Definition~2.1/2]{Triebel77III} contains the function $\langle\xi\rangle^{s}\varphi(\langle\xi\rangle)$ of $\xi\in\mathbb{R}^{n}$ whatever $s\in\mathbb{R}$ and $\varphi\in\Upsilon$. Thus, Triebel's result is applicable to the spaces appearing in the interpolation formula \eqref{C-interp-Sobolev} provided that  $\Omega=\mathbb{R}^{n}$.

If $\Omega$ is an arbitrary open domain in $\mathbb{R}^{n}$, then the restriction operator $R_{\Omega}:w\mapsto w\!\upharpoonright\!\Omega$, where $w\in\mathcal{S}'(\mathbb{R}^{n})$, acts continuously from each space $H^{s,\varphi}_{p}(\mathbb{R}^{n})$ onto $H^{s,\varphi}_{p}(\Omega)$. This implies that $R_{\Omega}$ is a bounded operator between the spaces
\begin{equation*}
R_{\Omega}:H^{s,\varphi}_{p}(\mathbb{R}^{n})=
[H^{s_0,\varphi_0}_{p_{0}}(\mathbb{R}^{n}),
H^{s_1,\varphi_1}_{p_{1}}(\mathbb{R}^{n})]_{\theta}\to
[H^{s_0,\varphi_0}_{p_{0}}(\Omega),
H^{s_1,\varphi_1}_{p_{1}}(\Omega)]_{\theta}
\end{equation*}
under the hypotheses of Theorem~\ref{th-C-interp-Sobolev}. Hence,
\begin{equation}\label{proof-interp-embed}
H^{s,\varphi}_{p}(\Omega)=R_{\Omega}(H^{s,\varphi}_{p}(\mathbb{R}^{n}))
\hookrightarrow [H^{s_0,\varphi_0}_{p_{0}}(\Omega),
H^{s_1,\varphi_1}_{p_{1}}(\Omega)]_{\theta}.
\end{equation}
The embedding of the margin Banach spaces is continuous because the norms in them are compatible. As its special case we have the continuous embedding
\begin{equation}\label{proof-interp-embed-case}
H^{s,\varphi}_{p}(\Omega)\hookrightarrow [H^{s_0,\varphi_0}_{2}(\Omega),
H^{s_1}_{p_{1}}(\Omega)]_{\theta}
\end{equation}
under the hypotheses of Theorem~\ref{th-C-interp-Sobolev-got}.

It remains to prove the inverse of \eqref{proof-interp-embed} in the case where $\Omega$ is an open half-space and in the case when $\Omega$ is a bounded Lipschitz domain. Consider these cases together. We will first substantiate the inverse of \eqref{proof-interp-embed-case}.

We use Rychkov's \cite[Theorem~4.1]{Rychkov99} universal linear extension operator for $\Omega$ and denote it by $T$. This operator extends every distribution $u\in\mathcal{S}'(\Omega)$ to $\mathbb{R}^{n}$ and acts continuously from whole $H^{s}_{p}(\Omega)$ to $H^{s}_{p}(\mathbb{R}^{n})$ whatever $s\in\mathbb{R}$ and $1<p<\infty$ (the same is true for all classical Besov and Triebel--Lizorkin normed or quasi-normed spaces). It follows from Proposition~\ref{prop-quadratic-interp}, that $T$ is a bounded operator from $H^{s,\varphi}_{2}(\Omega)$ to $H^{s,\varphi}_{2}(\mathbb{R}^{n})$ whenever $s\in\mathbb{R}$ and $\varphi\in\Upsilon$. Hence, formula \eqref{C-interp-Sobolev-got} in the $\Omega=\mathbb{R}^{n}$ case implies the bounded extension operator
\begin{equation*}
T:[H^{s_0,\varphi_0}_{2}(\Omega),H^{s_1}_{p_1}(\Omega)]_{\theta}\to
[H^{s_0,\varphi_0}_{2}(\mathbb{R}^{n}),
H^{s_1}_{p_1}(\mathbb{R}^{n})]_{\theta}=H^{s,\varphi}_{p}(\mathbb{R}^{n})
\end{equation*}
under the hypotheses of Theorem~\ref{th-C-interp-Sobolev-got}. Thus, the identity mapping $R_{\Omega}T$ sets the continuous embedding
\begin{equation*}
R_{\Omega}T:[H^{s_0,\varphi_0}_{2}(\Omega),H^{s_1}_{p_1}(\Omega)]_{\theta}
\to H^{s,\varphi}_{p}(\Omega),
\end{equation*}
which means the inverse of \eqref{proof-interp-embed-case}. The interpolation formula \eqref{C-interp-Sobolev-got} is proved.

Now we can prove the inverse of \eqref{proof-interp-embed}. It follows from \eqref{C-interp-Sobolev-got} that $T$ is a bounded operator from $H^{s,\varphi}_{p}(\Omega)$ to $H^{s,\varphi}_{p}(\mathbb{R}^{n})$ whatever $s\in\mathbb{R}$, $\varphi\in\Upsilon$, and $1<p<\infty$. This implies that $T$ is also a bounded operator on the pair of spaces
\begin{equation*}
T:[H^{s_0,\varphi_0}_{p_{0}}(\Omega),
H^{s_1,\varphi_1}_{p_{1}}(\Omega)]_{\theta}\to
[H^{s_0,\varphi_0}_{p_{0}}(\mathbb{R}^{n}),
H^{s_1,\varphi_1}_{p_{1}}(\mathbb{R}^{n})]_{\theta}=
H^{s,\varphi}_{p}(\mathbb{R}^{n})
\end{equation*}
under the hypotheses of Theorem~\ref{th-C-interp-Sobolev} used in the case where $\Omega=\mathbb{R}^{n}$. Thus, the identity mapping $R_{\Omega}T$ sets the continuous embedding
\begin{equation*}
R_{\Omega}T:[H^{s_0,\varphi_0}_{p_{0}}(\Omega),
H^{s_1,\varphi_1}_{p_{1}}(\Omega)]_{\theta}\to H^{s,\varphi}_{p}(\Omega),
\end{equation*}
which means the inverse of \eqref{proof-interp-embed}. The interpolation formula \eqref{C-interp-Sobolev} is proved.
\end{proof}

We assume henceforth that $n\geq2$ and that $\Omega$ is a bounded open domain with a boundary $\Gamma$ of class $C^{\infty}$; as usual,  $\overline{\Omega}:=\Omega\cup\Gamma$. In this case, the space $H^{s,\varphi}_{p}(\Omega)$ admits localization; specifically, it can be described in local coordinates near $\Gamma$ by means of the space $H^{s,\varphi}_{p}(\mathbb{R}^{n}_{+})$, where
\begin{equation*}
\mathbb{R}^{n}_{+}:=\{(x',x_{n}):x'\in\mathbb{R}^{n-1},x_{n}>0\},
\end{equation*}
with
\begin{equation*}
\partial\mathbb{R}^{n}_{+}=\mathbb{R}^{n}_{0}:=
\{(x',x_{n}):x'\in\mathbb{R}^{n-1},x_{n}=0\in\mathbb{R}\}.
\end{equation*}

Consider an arbitrary finite covering of $\overline{\Omega}$ by open bounded sets $U_1,\ldots,U_\varkappa, U_{\varkappa+1},\ldots,U_{\varkappa+r}$ in $\mathbb{R}^{n}$ that satisfy the following two conditions:
\begin{itemize}
\item[(a)] if $1\leq j\leq\varkappa$, then $\Gamma_j:=U_{j}\cap\Gamma\neq\emptyset$ and there exists a $C^\infty$-diffeomorphisms $\pi_{j}:V_{j}\leftrightarrow U_{j}$, where  $V_{j}$ is an open subset of $\mathbb{R}^{n}$ such that $\pi_{j}(V_{j}\cap\mathbb{R}^{n}_+)=U_j\cap\Omega$ and $\pi_{j}(V_{j}\cap\mathbb{R}^{n}_0)=\Gamma_j$;
\item[(b)] if $\varkappa+1\leq j\leq\varkappa+r$, then $\overline{U}_{j}\subset\Omega$.
\end{itemize}
Choose functions $\chi_{1},\ldots,\chi_{\varkappa+r}\in
C^{\infty}(\mathbb{R}^{n})$ such that $\mathrm{supp}\,\chi_{j}\subset U_{j}$ whenever $1\leq j\leq\varkappa+r$ and that $\chi_{1}+\cdots+\chi_{\varkappa+r}=1$ on $\overline{\Omega}$. Let $\mathcal{O}$ and $\mathcal{O_+}$ denote the operators of the extension of a function/distribution by zero onto $\mathbb{R}^{n}$ and $\mathbb{R}^{n}_{+}$, resp.

\begin{theorem}\label{th-localization}
Let $s\in\mathbb{R}$, $\varphi\in\Upsilon$, and $1<p<\infty$. Then the space $H^{s,\varphi}_{p}(\Omega)$ consists of all distributions $u\in \mathcal{S}'(\Omega)$ such that
$\mathcal{O_+}((\chi_{j}u)\circ\pi_{j})\in H^{s,\varphi}_{p}(\mathbb{R}^{n}_+)$ whenever $1\leq j\leq\varkappa$ and that $\mathcal{O}(\chi_{j}u)\in H^{s,\varphi}_{p}(\mathbb{R}^{n})$ whenever $\varkappa+1\leq j\leq\varkappa+r$. Moreover, the following equivalence of norms holds true:
\begin{equation}\label{equiv-norms}
\|u,H^{s,\varphi}_{p}(\Omega)\|\asymp
\sum_{j=1}^{\varkappa}\|\mathcal{O}_{+}((\chi_{j}u)\circ\pi_{j}),
{H^{s,\varphi}_{p}(\mathbb{R}^{n}_+)}\| + \sum_{j=\varkappa+1}^{\varkappa+r}\|\mathcal{O}(\chi_{j}u), {H^{s,\varphi}_{p}(\mathbb{R}^{n})}\|.
\end{equation}
\end{theorem}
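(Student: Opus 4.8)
The plan is to realise the localisation as a bounded linear retraction with a bounded linear coretraction and to pass from the classical case $\varphi\equiv1$ to the general one by the interpolation formulas of Section~\ref{sec2}, in the same spirit as the proof of Theorems~\ref{th-C-interp-Sobolev} and~\ref{th-C-interp-Sobolev-got}. Write
\[
\mathcal{H}^{s,\varphi}_{p}:=\prod_{j=1}^{\varkappa}H^{s,\varphi}_{p}(\mathbb{R}^{n}_{+})\times\prod_{j=\varkappa+1}^{\varkappa+r}H^{s,\varphi}_{p}(\mathbb{R}^{n}).
\]
First I would introduce the ``flattening'' operator $L$ that sends $u\in\mathcal{S}'(\Omega)$ to the tuple with components $\mathcal{O}_{+}((\chi_{j}u)\circ\pi_{j})$ for $1\le j\le\varkappa$ and $\mathcal{O}(\chi_{j}u)$ for $\varkappa+1\le j\le\varkappa+r$, and a ``gluing'' operator $K$ going the other way. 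To define $K$, fix $\eta_{j}\in C^{\infty}_{0}(V_{j})$ with $\eta_{j}\equiv1$ on $\pi_{j}^{-1}(\mathrm{supp}\,\chi_{j})$ for $j\le\varkappa$, fix $\widetilde\chi_{j}\in C^{\infty}_{0}(U_{j})$ with $\widetilde\chi_{j}\equiv1$ on $\mathrm{supp}\,\chi_{j}$ for $j>\varkappa$, and use Rychkov's extension operator $T$ for $\mathbb{R}^{n}_{+}$ from \cite[Theorem~4.1]{Rychkov99}; then put
\[
K\bigl((v_{j})_{j}\bigr):=R_{\Omega}\Biggl(\sum_{j=1}^{\varkappa}\mathcal{O}\bigl((\eta_{j}\,Tv_{j})\circ\pi_{j}^{-1}\bigr)+\sum_{j=\varkappa+1}^{\varkappa+r}\widetilde\chi_{j}\,v_{j}\Biggr).
\]
Since each extension by zero and each change of variables occurring in $L$ and $K$ is applied to a distribution whose support lies compactly inside the relevant chart, these operators are well defined on $\mathcal{S}'$, and --- because $\chi_{1}+\dots+\chi_{\varkappa+r}=1$ on $\overline{\Omega}$ and $\eta_{j}\circ\pi_{j}^{-1}\equiv1$, $\widetilde\chi_{j}\equiv1$ on $\mathrm{supp}\,\chi_{j}$ --- a chart-by-chart computation yields the algebraic identity $KL=\mathrm{id}$ on $\mathcal{S}'(\Omega)$.

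Next I would record the classical ($\varphi\equiv1$) form of the statement: for every $t\in\mathbb{R}$ and $1<q<\infty$, the operators $L\colon H^{t}_{q}(\Omega)\to\mathcal{H}^{t}_{q}$ and $K\colon\mathcal{H}^{t}_{q}\to H^{t}_{q}(\Omega)$ are bounded (see, e.g., \cite{Triebel95}; the boundedness of $K$ follows from that of multiplication by $C^{\infty}$ cut-offs, of composition with $C^{\infty}$-diffeomorphisms, of $T$, and of extension by zero of compactly supported distributions). For $p=2$ I would then apply Proposition~\ref{prop-quadratic-interp}, with some fixed $\varepsilon,\delta>0$, to $\Omega$, to the half-space $\mathbb{R}^{n}_{+}$, and to $\mathbb{R}^{n}$ (all of which satisfy \eqref{Omega-assumption} because $n\ge2$), and use that quadratic interpolation with a function parameter commutes with finite direct sums to get
\[
H^{s,\varphi}_{2}(\Omega)=\bigl(H^{s-\varepsilon}_{2}(\Omega),H^{s+\delta}_{2}(\Omega)\bigr)_{\psi},\qquad\mathcal{H}^{s,\varphi}_{2}=\bigl(\mathcal{H}^{s-\varepsilon}_{2},\mathcal{H}^{s+\delta}_{2}\bigr)_{\psi}.
\]
By the interpolation property of $(\cdot,\cdot)_{\psi}$ this makes $L$ and $K$ bounded between $H^{s,\varphi}_{2}(\Omega)$ and $\mathcal{H}^{s,\varphi}_{2}$. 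For $p\neq2$ I would repeat the argument with Theorem~\ref{th-C-interp-Sobolev-got} in place of Proposition~\ref{prop-quadratic-interp} (again using it for $\Omega$, $\mathbb{R}^{n}_{+}$, $\mathbb{R}^{n}$, and the choice of parameters $p_{1},\theta,s_{0},s_{1},\varphi_{0}$ from that theorem), now starting from the $p=2$ case just obtained and the classical case, and using that complex interpolation also commutes with finite direct sums; this gives $L\colon H^{s,\varphi}_{p}(\Omega)\to\mathcal{H}^{s,\varphi}_{p}$ and $K\colon\mathcal{H}^{s,\varphi}_{p}\to H^{s,\varphi}_{p}(\Omega)$ bounded.

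Finally I would assemble the statement. If $u\in H^{s,\varphi}_{p}(\Omega)$, boundedness of $L$ shows that the $\varkappa+r$ localisation conditions hold and bounds the right-hand side of \eqref{equiv-norms} by a constant multiple of $\|u,H^{s,\varphi}_{p}(\Omega)\|$. Conversely, if $u\in\mathcal{S}'(\Omega)$ satisfies these conditions, then $Lu\in\mathcal{H}^{s,\varphi}_{p}$, so $u=KLu\in H^{s,\varphi}_{p}(\Omega)$ and, by boundedness of $K$, $\|u,H^{s,\varphi}_{p}(\Omega)\|$ is bounded by a constant multiple of $\|Lu,\mathcal{H}^{s,\varphi}_{p}\|$, which is the right-hand side of \eqref{equiv-norms}; combining the two estimates gives the equivalence \eqref{equiv-norms}. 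I expect the one delicate point to be bookkeeping rather than analysis: one must arrange the auxiliary cut-offs and, where necessary, global modifications of the diffeomorphisms $\pi_{j}$ so that every extension-by-zero and every change of variables in $L$ and $K$ acts on a distribution supported compactly in its chart, so that these operators are genuinely defined on $\mathcal{S}'$, are bounded on the classical spaces, and satisfy $KL=\mathrm{id}$ literally; once this is in place, all of the analytic content is borrowed from the classical localisation theorem and from the interpolation formulas of Section~\ref{sec2}.
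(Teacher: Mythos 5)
Your proposal is correct and follows essentially the same route as the paper: the same flattening operator, a sewing/gluing left inverse built from cut-offs $\eta_j$ with $KL=\mathrm{id}$ on $\mathcal{S}'(\Omega)$, boundedness of both operators taken from the classical $\varphi\equiv1$ case and transferred to general $\varphi$ via Proposition~\ref{prop-quadratic-interp} and Theorem~\ref{th-C-interp-Sobolev-got}. The only cosmetic difference is that your gluing operator detours through Rychkov's extension and $\mathcal{O}$ on $\mathbb{R}^{n}$ before restricting to $\Omega$, whereas the paper extends $(\eta_j v_j)\circ\pi_j^{-1}$ by zero directly onto $\Omega$; both variants are sound.
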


Here, of course, $(\chi_{j}u)\circ\pi_{j}$ denotes the image of the distribution $\chi_{j}u$ in the local chart $\pi_{j}$.

\begin{proof}
The flattening mapping
\begin{equation*}
T: u\mapsto\bigl(\mathcal{O}_{+}((\chi_{1}u)\circ\pi_{1}),\ldots,
\mathcal{O}_{+}((\chi_{\varkappa}u)\circ\pi_{\varkappa}),
\mathcal{O}(\chi_{\varkappa+1}u),\ldots,
\mathcal{O}(\chi_{\varkappa+r}u)\bigr),
\end{equation*}
where $u\in\mathcal{S}\,'(\Omega)$, induces a bounded linear operator
\begin{equation}\label{T-operator}
T:H^{s,\varphi}_{p}(\Omega)\to
(H^{s,\varphi}_{p}(\mathbb{R}^{n}_+))^{\varkappa}
\times (H^{s,\varphi}_{p}(\mathbb{R}^{n}))^{r}.
\end{equation}
This is a known fact in the case of Sobolev spaces, when $\varphi(\cdot)\equiv1$ (see, e.g., \cite[Proposition 3.2.3(iii)]{Triebel83}). Hence, the boundedness of this operator in our case follows from Proposition~\ref{prop-quadratic-interp} and Theorem~\ref{th-C-interp-Sobolev-got} by interpolation. (If $s>0$, then the boundedness also follows from \cite[Theorems 2 and~3]{Kalyabin87}.)

Let us construct a left inverse operator $K$ of $T$. For each $j\in\{1,\ldots,\varkappa\}$, we choose a function $\eta_j\in C^{\infty}_{0}(\mathbb{R}^{n})$ such that $\eta_j = 1$ on the set $\pi_{j}^{-1}(\mathrm{supp}\,\chi_j)$ and that $\mathrm{supp}\,\eta_j\subset V_{j}$. Moreover, for each $j\in\{\varkappa +1,\ldots,\varkappa+r\}$,  we choose a function $\eta_j\in C^{\infty}_{0}(\mathbb{R}^{n})$ such that $\eta_j=1$ on $\mathrm{supp}\,\chi_j$ and that $\mathrm{supp}\,\eta_j \subset U_{j}$.
Consider the sewing mapping
\begin{equation*}
K:(v_1,\ldots,v_\varkappa,v_{\varkappa+1},\ldots,v_{\varkappa+r})\mapsto
\sum_{j=1}^{\varkappa}
\mathcal{O}_\Omega((\eta_{j}v_{j})\circ\pi_{j}^{-1})+
\sum_{j=\varkappa+1}^{\varkappa+r}
(\eta_{j}v_{j})\!\!\upharpoonright\!\Omega,
\end{equation*}
where $v_1,\ldots,v_\varkappa\in\mathcal{S}'(\mathbb{R}^{n}_{+})$ and $v_{\varkappa+1},\ldots,v_{\varkappa+r}\in\mathcal{S}'(\mathbb{R}^{n})$.
Here, $\mathcal{O}_\Omega$ denotes the operator of the extension of a distribution by zero onto~$\Omega$. Then $KTu=u$ for every $u\in\mathcal{S}'(\Omega)$. This mapping induces a bounded linear operator
\begin{equation}\label{K-operator}
K: (H^{s,\varphi}_{p}(\mathbb{R}^{n}_+))^{\varkappa}
\times (H^{s,\varphi}_{p}(\mathbb{R}^{n}))^{r}\rightarrow H^{s,\varphi}_{p}(\Omega),
\end{equation}
which is justified by the same reasoning as that used for  \eqref{T-operator}.

The conclusion of Theorem~\ref{th-localization} follows from the boundedness of \eqref{T-operator} and \eqref{K-operator}. Indeed, if $u\in H^{s,\varphi}_{p}(\Omega)$, then $Tu$ lies in the target space of the operator \eqref{T-operator}, i.e. $u$ satisfies the properties indicated in the second sentence of this theorem. Conversely, if a distribution $u\in\mathcal{S}'(\Omega)$ satisfies these properties, then $u=KTu\in H^{s,\varphi}_{p}(\Omega)$. Formula \eqref{equiv-norms} is due to
\begin{equation*}
\|u,H^{s,\varphi}_{p}(\Omega)\|=\|KTu,H^{s,\varphi}_{p}(\Omega)\|\leq \|K\|\,\|Tu\|'\leq \|K\|\,\|T\|\,\|u,H^{s,\varphi}_{p}(\Omega)\|,
\end{equation*}
where  $\|K\|$ and $\|T\|$  are the norms of the operators  \eqref{K-operator} and \eqref{T-operator}, resp., and $\|\cdot\|'$  is the norm in the target space of \eqref{T-operator}.
\end{proof}

Considering $H^{s,\varphi}_{p}(\Omega)$ as a solution space of elliptic boundary-value problems, we need the space of the traces of all distributions $u\in H^{s,\varphi}_{p}(\Omega)$ on $\Gamma$, with $s>1/p$. The trace space is found to be the Besov space of generalized smoothness over $\Gamma$ with smoothness indexes $s-1/p$ and $\varphi$, as in the $\varphi(\cdot)\equiv1$ case. It can be defined on the base of the relevant space over $\mathbb{R}^{n-1}$ with the help of local charts on $\Gamma$. Let us introduce the latter space.

Put $d:=n-1$. Choose a function $\beta_0\in C^{\infty}(\mathbb{R}^{d})$ such that $\beta_0(y)=1$ whenever $|y|\leq1$ and that $\beta_0(y)=0$ whenever $|y|\geq2$, with $y\in\mathbb{R}^{d}$. Assuming  $1\leq k\in\mathbb{Z}$,  we define a function   $\beta_k(y):=\beta_{0}(2^{-k}y)-\beta_{0}(2^{1-k}y)$ of $y\in\mathbb{R}^{d}$. Thus, the system $\{\beta_{k}:0\leq k\in\mathbb{Z}\}$ is a $C^{\infty}$-resolution of unity on $\mathbb{R}^{d}$.

As above, $s\in\mathbb{R}$, $\varphi\in\Upsilon$, and $p\in(1,\infty)$. By definition, the linear space $B^{s,\varphi}_{p}(\mathbb{R}^{d})$ consists of all distributions $w\in\mathcal{S}'(\mathbb{R}^{d})$ such that the distribution $w_{k}:=\mathcal{F}^{-1}[\beta_{k}\mathcal{F}w]$ belongs to $L_{p}(\mathbb{R}^{d})$ whenever $0\leq k\in\mathbb{Z}$ and that
\begin{equation}\label{B-norm}
\|w,B^{s,\varphi}_{p}(\mathbb{R}^{d})\|^{p}:=\sum_{k=0}^{\infty}
\bigl(2^{sk}\varphi(2^{k})\|w_{k},L_{p}(\mathbb{R}^{d})\|\bigr)^{p}
<\infty.
\end{equation}
This space is endowed with the norm $\|w,B^{s,\varphi}_{p}(\mathbb{R}^{d})\|$.

The space $B^{s,\varphi}_{p}(\mathbb{R}^{d})$ belongs to the class of Besov spaces of generalized smoothness investigated, e.g., in \cite{AlmeidaCaetano11, CobosFernandez88, CobosKuhn09, FarkasLeopold06, HaroskeMoura04, LoosveldtNicolay19, Merucci84}. This space does not depend up to equivalence of norms on our choice of the function $\beta_{0}$ \cite[Subsection~2.4]{CobosFernandez88}. If $\varphi(\cdot)\equiv1$, then $B^{s,\varphi}_{p}(\mathbb{R}^{d})$ is the Besov space $B^{s}_{p}(\mathbb{R}^{d})$ (denoted also by $B^{s}_{p,p}(\mathbb{R}^{d})$) with smoothness index $s$ and integrability exponent $p$ (and summability exponent~$p$). Every space $B^{s,\varphi}_{p}(\mathbb{R}^{d})$ is complete and separable, is continuously embedded in $\mathcal{S}'(\mathbb{R}^{d})$, with $\mathcal{S}(\mathbb{R}^{d})$ being continuously and densely embedded in $B^{s,\varphi}_{p}(\mathbb{R}^{d})$. This follows from the analogous properties of the Besov space $B^{0}_{p}(\mathbb{R}^{d})$ because the mapping $w\mapsto w_{s,\varphi}$ (defined by \eqref{w-s-varphi}) sets a topological isomorphism between $B^{s,\varphi}_{p}(\mathbb{R}^{d})$ and $B^{0}_{p}(\mathbb{R}^{d})$ (see, e.g., \cite[Proposition~7]{Merucci84}). Hence, $C^{\infty}_{0}(\mathbb{R}^{d})$ is dense in  $B^{s,\varphi}_{p}(\mathbb{R}^{d})$.

We have the dense continuous embeddings
\begin{equation}\label{ebbeddins-B}
B^{s+\varepsilon}_{p}(\mathbb{R}^{d})\hookrightarrow
B^{s,\varphi}_{p}(\mathbb{R}^{d})\hookrightarrow B^{s-\varepsilon}_{p}(\mathbb{R}^{d})\quad\mbox{for every}\;\;
\varepsilon>0.
\end{equation}
They follow directly from \eqref{B-norm} and the fact that
$c^{-1}t^{-\varepsilon}\leq\varphi(t)\leq c\,t^{\varepsilon}$ whenever $t\geq1$ with some number $c\geq1$, due to property (i) of the class $\Upsilon$ (see \cite[Section 1.5, property~$1^{\circ}$]{Seneta76}). The spaces $B^{s,\varphi}_{p}(\mathbb{R}^{d})$, where $s\in\mathbb{R}$ and
$\varphi\in\Upsilon$, form the refined Besov scale (over $\mathbb{R}^{d}$) with integrability exponent $p$. Note that the Hilbert spaces $B^{s,\varphi}_{2}(\mathbb{R}^{d})$ and $H^{s,\varphi}_{2}(\mathbb{R}^{d})$ coincide up to equivalence of norms \cite[Subsection~4.3, formula~(19)]{Merucci84}.

Let us introduce the space $B^{s,\varphi}_{p}(\Gamma)$. Using the isomorphisms $\pi_{j}:V_j\leftrightarrow U_j$ from condition~(a), we put
$V_{j,0}:=V_j\cap\mathbb{R}^{n}_{0}$ and $\pi_{j,0}:=\pi_{j}\!\!\upharpoonright\!{V_{j,0}}$ whenever $1\leq j\leq\varkappa$. Thus, we have the collection of $\varkappa$ local charts $\pi_{j,0}:V_{j,0}\leftrightarrow \Gamma_{j}$ of class $C^{\infty}$ such that the open sets $\Gamma_{1},\ldots,\Gamma_{\varkappa}$ form a covering of $\Gamma$. We choose functions $\chi_{j,0}\in C^{\infty}(\Gamma)$, with $j\in\{1,\ldots,\varkappa\}$, such that
\begin{equation}\label{1-partition-Gamma}
\mathrm{supp}\,\chi_{j,0}\subset \Gamma_j
\quad\mbox{and}\quad
\chi_{1,0}+\cdots+\chi_{\varkappa,0}\equiv1.
\end{equation}

By definition, the linear space $B^{s,\varphi}_{p}(\Gamma)$ consists of all distributions $v\in \mathcal{D}'(\Gamma)$ such that $\mathcal{O}((\chi_{j,0}\,v)\circ\pi_{j,0})\in B^{s,\varphi}_{p}(\mathbb{R}^{d})$ for every $j\in\{1,\dots,\varkappa\}$. Here, $\mathcal{O}$ denotes the operator of the extension of a function/distribution by zero onto $\mathbb{R}^{d}$. As usual, $\mathcal{D}'(\Gamma)$ stands for the linear topological space of all distributions on the compact boundaryless $C^{\infty}$-manifold~$\Gamma$. The space $B^{s,\varphi}_{p}(\Gamma)$ is endowed with the norm
\begin{equation}\label{Besov-norm-boundary}
\|v,B^{s,\varphi}_{p}(\Gamma)\|:=\sum_{j=1}^{\varkappa}\,
\|\mathcal{O}((\chi_{j,0}\,v)\circ
\pi_{j,0}),B^{s,\varphi}_{p}(\mathbb{R}^{d})\|.
\end{equation}

If $\varphi(\cdot)\equiv1$, then $B^{s,\varphi}_{p}(\Gamma)$ is the Besov space $B^{s}_{p}(\Gamma)$ (see, e.g, \cite[Definition~3.6.1]{Triebel95}). Generally, we have the continuous embeddings \eqref{ebbeddins-B} with $\Gamma$ instead of $\mathbb{R}^{d}$. They are compact due to the fact that $\ell<s$ implies the compact embedding $B^{s}_{p}(\Gamma)\hookrightarrow B^{\ell}_{p}(\Gamma)$ of Besov spaces. Moreover, $B^{s,\varphi}_{p}(\Gamma)$ is continuously embedded in $\mathcal{D}'(\Gamma)$.

\begin{theorem}\label{th-Besov-properties}
Let $s\in\mathbb{R}$, $\varphi\in\Upsilon$, and $1<p<\infty$. Then the space $B^{s,\varphi}_{p}(\Gamma)$ is complete and separable and does not depend on the choice of open sets $\Gamma_{j}$, diffeomorphisms $\pi_{j,0}:V_{j,0}\leftrightarrow \Gamma_{j}$ of class $C^{\infty}$, and functions $\chi_{j,0}\in C^{\infty}(\Gamma)$, with $j\in\{1,\ldots,\varkappa\}$, that satisfy $\Gamma=\Gamma_{1}\cup\cdots\cup\Gamma_{\varkappa}$ and \eqref{1-partition-Gamma}. The set $C^{\infty}(\Gamma)$ is dense in $B^{s,\varphi}_{p}(\Gamma)$.
\end{theorem}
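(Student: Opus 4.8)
The plan is to reduce all three assertions of Theorem~\ref{th-Besov-properties} to the corresponding facts for the model space $B^{s,\varphi}_{p}(\mathbb{R}^{d})$, which were already recorded above, by means of a standard flattening/sewing pair of operators analogous to those used in the proof of Theorem~\ref{th-localization}. First I would introduce the flattening map
\[
T_{0}\colon v\mapsto\bigl(\mathcal{O}((\chi_{1,0}v)\circ\pi_{1,0}),\ldots,
\mathcal{O}((\chi_{\varkappa,0}v)\circ\pi_{\varkappa,0})\bigr),
\qquad v\in\mathcal{D}'(\Gamma),
\]
and, choosing cut-off functions $\eta_{j,0}\in C^{\infty}_{0}(V_{j,0})$ with $\eta_{j,0}=1$ on $\pi_{j,0}^{-1}(\operatorname{supp}\chi_{j,0})$, the sewing map
\[
K_{0}\colon(w_{1},\ldots,w_{\varkappa})\mapsto
\sum_{j=1}^{\varkappa}(\eta_{j,0}w_{j})\circ\pi_{j,0}^{-1},
\]
where each $(\eta_{j,0}w_{j})\circ\pi_{j,0}^{-1}$ is understood as extended by zero to a distribution on $\Gamma$. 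One checks the algebraic identity $K_{0}T_{0}v=v$ for all $v\in\mathcal{D}'(\Gamma)$, using only that $\sum_{j}\chi_{j,0}\equiv1$ and that $\eta_{j,0}\equiv1$ where $\chi_{j,0}$ is supported. By the very definition \eqref{Besov-norm-boundary} of the norm, $T_{0}$ is by construction a bounded operator from $B^{s,\varphi}_{p}(\Gamma)$ into $\bigl(B^{s,\varphi}_{p}(\mathbb{R}^{d})\bigr)^{\varkappa}$ (indeed an isometric embedding onto its closed range $T_{0}K_{0}$).

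The substantive point is the boundedness of $K_{0}$ from $\bigl(B^{s,\varphi}_{p}(\mathbb{R}^{d})\bigr)^{\varkappa}$ into $B^{s,\varphi}_{p}(\Gamma)$, which amounts to showing that two operations preserve $B^{s,\varphi}_{p}(\mathbb{R}^{d})$ up to equivalent norms: multiplication by a function from $C^{\infty}_{0}(\mathbb{R}^{d})$, and pullback by a $C^{\infty}$-diffeomorphism between open subsets of $\mathbb{R}^{d}$ acting on compactly supported distributions. For the classical Besov case $\varphi\equiv1$ both are well known (see, e.g., \cite[Theorems 2.8.2 and 4.3.2]{Triebel95}); for general $\varphi\in\Upsilon$ one gets them for free by interpolation: using the embeddings \eqref{ebbeddins-B} one reduces to a fixed pair of Sobolev/Besov exponents, or one simply invokes that $B^{s,\varphi}_{p}(\mathbb{R}^{d})=(B^{s-\varepsilon}_{p}(\mathbb{R}^{d}),B^{s+\varepsilon}_{p}(\mathbb{R}^{d}))_{\psi}$ in the $p=2$ case together with the complex interpolation description via Sobolev spaces with $p_{1}$, exactly as in Theorems~\ref{th-C-interp-Sobolev} and~\ref{th-C-interp-Sobolev-got} (the refined Besov scale is covered by the same Triebel spaces $F^{g(\cdot)}_{p,q}$ with $q=p$). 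Alternatively, one can cite directly the diffeomorphism- and multiplier-stability of the spaces $B^{s,\varphi}_{p}(\mathbb{R}^{d})$ from \cite{HaroskeMoura04} or \cite{Merucci84}. Once $K_{0}$ is bounded, completeness and separability of $B^{s,\varphi}_{p}(\Gamma)$ follow because $B^{s,\varphi}_{p}(\Gamma)$ is then (via $T_{0}$) isomorphic to the closed subspace $T_{0}K_{0}\bigl(\bigl(B^{s,\varphi}_{p}(\mathbb{R}^{d})\bigr)^{\varkappa}\bigr)$ of a complete separable space, and density of $C^{\infty}(\Gamma)$ follows from density of $C^{\infty}_{0}(\mathbb{R}^{d})$ in $B^{s,\varphi}_{p}(\mathbb{R}^{d})$ by applying $K_{0}$ to smooth tuples and using $K_{0}T_{0}=\mathrm{id}$.

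Independence of the norm from the choice of atlas and partition of unity is then the familiar two-sided estimate: given a second admissible system $(\widetilde{\Gamma}_{j},\widetilde{\pi}_{j,0},\widetilde{\chi}_{j,0})$ with its maps $\widetilde{T}_{0},\widetilde{K}_{0}$, the composition $\widetilde{T}_{0}K_{0}$ is a bounded operator on the product spaces $\bigl(B^{s,\varphi}_{p}(\mathbb{R}^{d})\bigr)^{\varkappa}\to\bigl(B^{s,\varphi}_{p}(\mathbb{R}^{d})\bigr)^{\widetilde{\varkappa}}$ — its entries are again compositions of multiplications by $C^{\infty}_{0}$-functions and pullbacks by the transition diffeomorphisms $\widetilde{\pi}_{j,0}^{-1}\circ\pi_{i,0}$, all of which are smooth on the relevant overlaps — and $\widetilde{T}_{0}K_{0}T_{0}=\widetilde{T}_{0}$, so the two norms bound each other by the operator norms of $\widetilde{T}_{0}K_{0}$ and $T_{0}\widetilde{K}_{0}$. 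I expect the only genuine obstacle to be the diffeomorphism invariance of $B^{s,\varphi}_{p}(\mathbb{R}^{d})$ for arbitrary (not necessarily affine) $C^{\infty}$-diffeomorphisms; since these are all local statements about compactly supported distributions and the exponent is outside any exceptional range, they are handled either by the cited references or, uniformly in $\varphi$, by the interpolation machinery already developed in this section, so no new difficulty arises.
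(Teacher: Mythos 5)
Your overall architecture coincides with the paper's: the flattening/sewing pair $T_{0},K_{0}$ with $K_{0}T_{0}=\mathrm{id}$, the isometry of $T_{0}$, the boundedness of $K_{0}$ reduced to multiplier and diffeomorphism stability of $B^{s,\varphi}_{p}(\mathbb{R}^{d})$, and the retract structure yielding completeness, separability and density of $C^{\infty}(\Gamma)$. Your treatment of the independence of the atlas (a direct estimate via the transition operators $\widetilde{T}_{0}K_{0}$) is a harmless variation on the paper's route, which instead deduces it from the interpolation formula \eqref{C-interp-Besov} for $W=\Gamma$ together with the known independence for classical Besov spaces.

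There is, however, a genuine gap at the step you yourself flag as the substantive one. To get multiplier and diffeomorphism invariance of $B^{s,\varphi}_{p}(\mathbb{R}^{d})$ for $p\neq 2$ by interpolation from the classical case, you need the complex interpolation description of the refined Besov scale over $\mathbb{R}^{d}$ (Theorem~\ref{th-C-interp-Besov-got} with $W=\mathbb{R}^{d}$), and none of your three justifications for it holds up. The embeddings \eqref{ebbeddins-B} alone do not transfer operator boundedness to the intermediate space, so the ``reduce to a fixed pair of exponents'' reduction is vacuous without an interpolation property. Your main route --- arguing ``exactly as in Theorems~\ref{th-C-interp-Sobolev} and \ref{th-C-interp-Sobolev-got}'' because the refined Besov scale is ``covered by the same Triebel spaces $F^{g(\cdot)}_{p,q}$ with $q=p$'' --- is precisely what the paper's remark after Theorem~\ref{th-trace}'s neighbourhood rules out: Triebel's interpolation result for $B^{g(\cdot)}_{p,q}$/$F^{g(\cdot)}_{p,q}$ does \emph{not} cover \eqref{C-interp-Besov} in the $W=\mathbb{R}^{d}$ case, since Triebel's Besov spaces of generalized smoothness do not coincide with the classical Besov spaces for power smoothness indexes. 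The citations of \cite{HaroskeMoura04} and \cite{Merucci84} for diffeomorphism stability are also not load-bearing, as those papers do not establish such results. The paper closes this gap by a separate argument: each $B^{s,\varphi}_{p}(\mathbb{R}^{d})$ is a retract of the weighted sequence space $l^{s,\varphi}_{p}(L_{p}(\mathbb{R}^{d}))$ with retraction and coretraction independent of the parameters (Cobos--Fernandez), and the complex interpolation of these sequence spaces is computed explicitly via \cite[Theorems 1.18.1 and 1.18.4]{Triebel95}; this yields \eqref{C-interp-Besov} for $W=\mathbb{R}^{d}$ first, after which your argument for $K_{0}$ and everything downstream goes through. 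Without that ingredient, the boundedness of $K_{0}$ --- and hence all three assertions of the theorem --- is unproven for $p\neq2$.
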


The refined Besov scale has quite similar interpolation properties to those formulated in Theorems \ref{th-C-interp-Sobolev} and \ref{th-C-interp-Sobolev-got}.

\begin{theorem}\label{th-C-interp-Besov}
Under the hypotheses of Theorem~$\ref{th-C-interp-Sobolev}$, the equality
\begin{equation}\label{C-interp-Besov}
[B^{s_0,\varphi_0}_{p_{0}}(W),B^{s_1,\varphi_1}_{p_{1}}(W)]_{\theta}=
B^{s,\varphi}_{p}(W)
\end{equation}
holds true up to equivalence of norms provided that $W=\mathbb{R}^{d}$ or $W=\Gamma$.
\end{theorem}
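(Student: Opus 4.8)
The plan is to follow the structure of the proof of Theorems~\ref{th-C-interp-Sobolev} and~\ref{th-C-interp-Sobolev-got}: first settle the case $W=\mathbb{R}^{d}$, and then transfer the formula to $W=\Gamma$ by localization. For $W=\mathbb{R}^{d}$ the key point is that $B^{s,\varphi}_{p}(\mathbb{R}^{d})$ is a \emph{retract} of the weighted vector-valued sequence space
\[
\ell_{p}\bigl((2^{sk}\varphi(2^{k}))_{k\geq0};L_{p}(\mathbb{R}^{d})\bigr)
:=\Bigl\{(f_{k})_{k\geq0}:\ \sum_{k=0}^{\infty}\bigl(2^{sk}\varphi(2^{k})\,\|f_{k},L_{p}(\mathbb{R}^{d})\|\bigr)^{p}<\infty\Bigr\}.
\]
Indeed, the Littlewood--Paley map $J\colon w\mapsto(w_{k})_{k\geq0}$, with $w_{k}=\mathcal{F}^{-1}[\beta_{k}\mathcal{F}w]$, is by \eqref{B-norm} a metric isomorphism of $B^{s,\varphi}_{p}(\mathbb{R}^{d})$ onto a closed subspace of this $\ell_{p}$-space, while the synthesis map $P\colon(f_{k})_{k\geq0}\mapsto\sum_{k\geq0}\mathcal{F}^{-1}[\widetilde{\beta}_{k}\mathcal{F}f_{k}]$, where $\widetilde{\beta}_{k}\in C^{\infty}_{0}(\mathbb{R}^{d})$ equals $1$ on $\mathrm{supp}\,\beta_{k}$ and is supported in a fixed dilate of $\mathrm{supp}\,\beta_{k}$, is a bounded left inverse of $J$ (here one uses the Michlin--H\"ormander multiplier estimates uniformly in $k$). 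I would then combine the classical complex-interpolation formula for weighted vector-valued $\ell_{p}$-spaces,
\[
\bigl[\ell_{p_{0}}(a^{(0)};L_{p_{0}}),\,\ell_{p_{1}}(a^{(1)};L_{p_{1}})\bigr]_{\theta}
=\ell_{p}\bigl(a;L_{p}\bigr),\qquad a_{k}=(a^{(0)}_{k})^{1-\theta}(a^{(1)}_{k})^{\theta},
\]
(which follows from $[L_{p_{0}},L_{p_{1}}]_{\theta}=L_{p}$ and the standard interpolation of weighted $\ell_{p}$ spaces) with the fact that complex interpolation commutes with retractions. Taking $a^{(j)}_{k}=2^{s_{j}k}\varphi_{j}(2^{k})$, formulas \eqref{C-interp-parameters} give precisely $a_{k}=2^{sk}\varphi(2^{k})$, so \eqref{C-interp-Besov} for $W=\mathbb{R}^{d}$ drops out. (Alternatively, this case is already essentially contained in the interpolation results for Besov spaces of generalized smoothness in \cite{CobosFernandez88,Merucci84,AlmeidaCaetano11}.)

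For $W=\Gamma$ I would apply the same retraction principle to the localization operators, in the spirit of the proof of Theorem~\ref{th-localization}. By the definition of $B^{s,\varphi}_{p}(\Gamma)$ and Theorem~\ref{th-Besov-properties}, the flattening mapping $T_{\Gamma}\colon v\mapsto\bigl(\mathcal{O}((\chi_{1,0}v)\circ\pi_{1,0}),\dots,\mathcal{O}((\chi_{\varkappa,0}v)\circ\pi_{\varkappa,0})\bigr)$ is an isomorphic embedding of $B^{s,\varphi}_{p}(\Gamma)$ into $(B^{s,\varphi}_{p}(\mathbb{R}^{d}))^{\varkappa}$, and a sewing mapping $K_{\Gamma}$ built from cut-off functions $\eta_{j}$ adapted to the charts (with $\eta_{j}=1$ on $\pi_{j,0}^{-1}(\mathrm{supp}\,\chi_{j,0})$), the inverse charts $\pi_{j,0}^{-1}$, and extension by zero, is a left inverse: $K_{\Gamma}T_{\Gamma}=\mathrm{id}$ on $\mathcal{D}'(\Gamma)$. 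The point to verify is that $T_{\Gamma}$ and $K_{\Gamma}$ are bounded on the pertinent spaces for \emph{every} admissible $s,\varphi,p$; this reduces to the invariance of $B^{s,\varphi}_{p}(\mathbb{R}^{d})$ under multiplication by functions in $C^{\infty}_{0}(\mathbb{R}^{d})$ and under pull-back by $C^{\infty}$-diffeomorphisms of $\mathbb{R}^{d}$ that are linear off a compact set. These invariances are classical for $\varphi(\cdot)\equiv1$; for general $\varphi$ they follow by interpolation — when $p=2$ from Proposition~\ref{prop-quadratic-interp} together with $B^{s,\varphi}_{2}(\mathbb{R}^{d})=H^{s,\varphi}_{2}(\mathbb{R}^{d})$, and when $p\neq2$ from the $W=\mathbb{R}^{d}$ case of \eqref{C-interp-Besov} just proved (taking $p_{0}=2$, $\varphi_{0}=\varphi^{1/(1-\theta)}$, $\varphi_{1}\equiv1$, and suitable $s_{0},s_{1}$). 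Granting this, $B^{s,\varphi}_{p}(\Gamma)$ is a retract of $(B^{s,\varphi}_{p}(\mathbb{R}^{d}))^{\varkappa}$ with coretraction $T_{\Gamma}$ and retraction $K_{\Gamma}$ that are the \emph{same} maps for the two endpoint spaces and for the interpolation space; applying \eqref{C-interp-Besov} for $\mathbb{R}^{d}$ coordinatewise and commuting interpolation with this retraction gives
\[
[B^{s_{0},\varphi_{0}}_{p_{0}}(\Gamma),B^{s_{1},\varphi_{1}}_{p_{1}}(\Gamma)]_{\theta}
=K_{\Gamma}\Bigl(\bigl[(B^{s_{0},\varphi_{0}}_{p_{0}}(\mathbb{R}^{d}))^{\varkappa},(B^{s_{1},\varphi_{1}}_{p_{1}}(\mathbb{R}^{d}))^{\varkappa}\bigr]_{\theta}\Bigr)
=K_{\Gamma}\bigl((B^{s,\varphi}_{p}(\mathbb{R}^{d}))^{\varkappa}\bigr)=B^{s,\varphi}_{p}(\Gamma),
\]
which is \eqref{C-interp-Besov} for $W=\Gamma$.

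The main obstacle I anticipate is not the interpolation bookkeeping but the \emph{uniform} mapping properties underlying the retraction arguments: verifying, for all $s\in\mathbb{R}$, $\varphi\in\Upsilon$, $p\in(1,\infty)$ simultaneously, that the Littlewood--Paley synthesis operator $P$ on $\mathbb{R}^{d}$ and — more delicately — that diffeomorphism pull-back and multiplication by smooth cut-offs are bounded on $B^{s,\varphi}_{p}(\mathbb{R}^{d})$, so that the pair $(K_{\Gamma},T_{\Gamma})$ can be used with the same normalization on the endpoint spaces and on the interpolation space. Once these boundedness statements are secured — either quoted from the literature on Besov spaces of generalized smoothness or bootstrapped from Proposition~\ref{prop-quadratic-interp}, the identity $B^{s,\varphi}_{2}=H^{s,\varphi}_{2}$, and the $\mathbb{R}^{d}$ case of the theorem as indicated — the remainder is a routine application of the stability of complex interpolation under retractions.
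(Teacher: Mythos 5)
Your proposal is correct and follows essentially the same route as the paper: for $W=\mathbb{R}^{d}$ the retract of the weighted vector-valued $\ell_{p}(L_{p})$-space combined with the classical interpolation formula for such sequence spaces (the paper quotes the retraction from Cobos--Fernandez rather than building the Littlewood--Paley synthesis operator explicitly), and for $W=\Gamma$ the flattening/sewing retraction onto $(B^{s,\varphi}_{p}(\mathbb{R}^{d}))^{\varkappa}$, with the boundedness of multiplication by cut-offs and of diffeomorphism pull-backs obtained by interpolation from the classical case exactly as you indicate. The "main obstacle" you flag is handled in the paper by the same bootstrap (Proposition~\ref{prop-quadratic-interp} plus the already-proved $W=\mathbb{R}^{d}$ case in the guise of Theorem~\ref{th-C-interp-Besov-got}), so no gap remains.
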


\begin{theorem}\label{th-C-interp-Besov-got}
Under the hypotheses of Theorem $\ref{th-C-interp-Sobolev-got}$, the equality
\begin{equation*}
B^{s,\varphi}_{p}(W)=[H^{s_0,\varphi_0}_{2}(W),B^{s_1}_{p_1}(W)]_{\theta}
\end{equation*}
holds true up up to equivalence of norms provided that $W=\mathbb{R}^{d}$ or $W=\Gamma$.
\end{theorem}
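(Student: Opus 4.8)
The plan is to obtain Theorem~\ref{th-C-interp-Besov-got} from Theorem~\ref{th-C-interp-Besov} in exactly the same way that Theorem~\ref{th-C-interp-Sobolev-got} was obtained from Theorem~\ref{th-C-interp-Sobolev}; the only extra ingredient is that the Hilbert spaces $H^{\sigma,\omega}_{2}$ and $B^{\sigma,\omega}_{2}$ coincide up to equivalence of norms for all $\sigma\in\mathbb{R}$ and $\omega\in\Upsilon$.

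First I would apply Theorem~\ref{th-C-interp-Besov} with the data prescribed in the hypotheses of Theorem~\ref{th-C-interp-Sobolev-got}: take $p_{0}:=2$, keep the given $p_{1}$, $\theta$, $s_{0}$, $s_{1}$, and put $\varphi_{0}(t):=\varphi^{1/(1-\theta)}(t)$ and $\varphi_{1}(\cdot)\equiv1$. Conditions \eqref{C-interp-parameters} then hold for these data: the relation $1/p=(1-\theta)/2+\theta/p_{1}$ is \eqref{p-2-interp}, the relation $s=(1-\theta)s_{0}+\theta s_{1}$ is assumed, and $\varphi_{0}^{1-\theta}(t)\varphi_{1}^{\theta}(t)=\varphi(t)$. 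Since $B^{s_{1},\varphi_{1}}_{p_{1}}(W)=B^{s_{1}}_{p_{1}}(W)$, Theorem~\ref{th-C-interp-Besov} yields
\begin{equation*}
[B^{s_{0},\varphi_{0}}_{2}(W),B^{s_{1}}_{p_{1}}(W)]_{\theta}=B^{s,\varphi}_{p}(W)
\end{equation*}
up to equivalence of norms, both for $W=\mathbb{R}^{d}$ and for $W=\Gamma$.

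It remains to replace the left endpoint $B^{s_{0},\varphi_{0}}_{2}(W)$ by $H^{s_{0},\varphi_{0}}_{2}(W)$. For $W=\mathbb{R}^{d}$ these spaces coincide up to equivalence of norms by the already cited result \cite[Subsection~4.3, formula~(19)]{Merucci84}; for $W=\Gamma$ the same identification holds because both spaces are built from the identical system of local charts $\pi_{j,0}$ and cut-off functions $\chi_{j,0}$, so that the norm equivalence over $\mathbb{R}^{d}$ transports through the localization. The pair $(H^{s_{0},\varphi_{0}}_{2}(W),B^{s_{1}}_{p_{1}}(W))$ is an interpolation pair for the same reason as $(B^{s_{0},\varphi_{0}}_{2}(W),B^{s_{1}}_{p_{1}}(W))$, since both endpoints are continuously embedded in $\mathcal{S}'(\mathbb{R}^{d})$, resp.\ $\mathcal{D}'(\Gamma)$; and since complex interpolation is an exact interpolation functor, applying the identity mapping in both directions shows that substituting the equivalently normed space changes $[\,\cdot\,,\,\cdot\,]_{\theta}$ only up to equivalence of norms. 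This gives the asserted equality.

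The only point requiring care is the manifold case $W=\Gamma$: one must check that the Sobolev-type Hilbert space $H^{s,\varphi}_{2}(\Gamma)$ is identified with $B^{s,\varphi}_{2}(\Gamma)$ and that interpolation commutes with the flattening and sewing operators attached to $\Gamma$. This is the analogue for $\Gamma$ of the transfer from $\mathbb{R}^{n}$ to $\Omega$ already exploited in the proofs of Theorems~\ref{th-C-interp-Sobolev}, \ref{th-C-interp-Sobolev-got}, and~\ref{th-localization}; over $\mathbb{R}^{d}$ the statement is immediate once Theorem~\ref{th-C-interp-Besov} is in hand.
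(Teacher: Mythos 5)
Your proposal is correct and follows the same route the paper intends: the paper's proof of Theorem~\ref{th-C-interp-Besov-got} consists of declaring it a direct consequence of Theorem~\ref{th-C-interp-Besov}, and your specialization $p_{0}=2$, $\varphi_{1}(\cdot)\equiv1$ combined with the identification $H^{\sigma,\omega}_{2}=B^{\sigma,\omega}_{2}$ (over $\mathbb{R}^{d}$ by the cited formula of Merucci, over $\Gamma$ by transporting that equivalence through the common local charts) is exactly the intended argument. The points you flag as needing care --- exactness of $[\cdot,\cdot]_{\theta}$ when replacing an endpoint by an equivalently normed space, and the manifold case $W=\Gamma$ --- are precisely the details the paper leaves implicit, and you handle them correctly.
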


Theorem \ref{th-C-interp-Besov-got} is a direct consequence of Theorem~\ref{th-C-interp-Besov} and shows (together with Proposition~\ref{prop-quadratic-interp} and its following version for spaces over $\Gamma$) that the refined Besov scale is obtained by interpolation between Sobolev and Besov spaces in two steps.

\begin{proposition}\label{prop-quadratic-interp-Gamma}
Under the hypotheses of Proposition~$\ref{prop-quadratic-interp}$, the equality
\begin{equation*}
H^{s,\varphi}_{2}(\Gamma)=
(H^{s-\varepsilon}_{2}(\Gamma),H^{s+\delta}_{2}(\Gamma))_{\psi}
\end{equation*}
holds true up to equivalence of norms.
\end{proposition}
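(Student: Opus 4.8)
The plan is to reduce the assertion to its already-established counterpart over $\mathbb{R}^{d}$, i.e.\ to Proposition~\ref{prop-quadratic-interp} taken with $\Omega=\mathbb{R}^{d}$ (recall that $H^{s,\varphi}_{2}(\mathbb{R}^{d})=B^{s,\varphi}_{2}(\mathbb{R}^{d})$ up to equivalence of norms), by means of a flattening/sewing pair of operators of the same kind as in the proof of Theorem~\ref{th-localization}, but now acting at the level of $\Gamma$. First I would use the local charts $\pi_{j,0}:V_{j,0}\leftrightarrow\Gamma_{j}$ and the partition of unity $\{\chi_{j,0}\}_{j=1}^{\varkappa}$ from \eqref{1-partition-Gamma} to define the flattening operator $T:v\mapsto\bigl(\mathcal{O}((\chi_{j,0}v)\circ\pi_{j,0})\bigr)_{j=1}^{\varkappa}$ and, after choosing cut-off functions $\eta_{j}\in C^{\infty}_{0}(V_{j,0})$ with $\eta_{j}=1$ on $\pi_{j,0}^{-1}(\mathrm{supp}\,\chi_{j,0})$, the sewing operator $K:(w_{j})_{j=1}^{\varkappa}\mapsto\sum_{j=1}^{\varkappa}\mathcal{O}_{\Gamma}\bigl((\eta_{j}w_{j})\circ\pi_{j,0}^{-1}\bigr)$, where $\mathcal{O}_{\Gamma}$ is the extension by zero onto $\Gamma$; then $KTv=v$ for every $v\in\mathcal{D}'(\Gamma)$. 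By the very definition \eqref{Besov-norm-boundary} of the norm on $\Gamma$, the operator $T$ is an isometric embedding of $H^{s,\varphi}_{2}(\Gamma)$ into $\bigl(H^{s,\varphi}_{2}(\mathbb{R}^{d})\bigr)^{\varkappa}$ (equipped with the $\ell^{1}$-sum norm) for every $s\in\mathbb{R}$ and $\varphi\in\Upsilon$, and likewise with $\varphi\equiv1$ and $s$ replaced by $s-\varepsilon$ or $s+\delta$; the operator $K$ is bounded in the opposite direction, because multiplication by a $C^{\infty}_{0}$-function and composition with a $C^{\infty}$-diffeomorphism are bounded operators on each $H^{s,\varphi}_{2}(\mathbb{R}^{d})$. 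This last fact is classical for Sobolev spaces, and I would extend it to arbitrary $\varphi\in\Upsilon$ by the quadratic-interpolation formula of Proposition~\ref{prop-quadratic-interp} over $\mathbb{R}^{d}$ together with the interpolation property, also using Theorem~\ref{th-Besov-properties} to guarantee that the $\Gamma$-norm does not depend on the chart data.

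Next I would observe that $T$, $K$, and hence the operator $P:=TK$ --- which is a projector since $P^{2}=T(KT)K=TK=P$ --- do not depend on $s$ or $\varphi$. Therefore $T$ restricts to an isomorphism of $H^{s,\varphi}_{2}(\Gamma)$ onto the (closed, complemented) subspace $\mathrm{ran}\,P$ of $\bigl(H^{s,\varphi}_{2}(\mathbb{R}^{d})\bigr)^{\varkappa}$, with inverse $K$, and the \emph{same} $P$ is a bounded projector on $\bigl(H^{s-\varepsilon}_{2}(\mathbb{R}^{d})\bigr)^{\varkappa}$ and on $\bigl(H^{s+\delta}_{2}(\mathbb{R}^{d})\bigr)^{\varkappa}$, whose ranges are identified by $T$ and $K$ with $H^{s-\varepsilon}_{2}(\Gamma)$ and $H^{s+\delta}_{2}(\Gamma)$, respectively.

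Then I would invoke two standard properties of the quadratic interpolation functor $(\cdot,\cdot)_{\psi}$ recalled in \cite[Section~1.1]{MikhailetsMurach14}: it commutes with finite orthogonal sums, and if $P$ is a bounded projector on both members of an interpolation pair $[E_{0},E_{1}]$ of separable Hilbert spaces with a dense continuous embedding $E_{1}\hookrightarrow E_{0}$, then $(PE_{0},PE_{1})_{\psi}=P\bigl((E_{0},E_{1})_{\psi}\bigr)$ up to equivalence of norms --- the latter being immediate from the interpolation property applied to $P$ and to the identity operator. Taking $E_{0}:=\bigl(H^{s-\varepsilon}_{2}(\mathbb{R}^{d})\bigr)^{\varkappa}$ and $E_{1}:=\bigl(H^{s+\delta}_{2}(\mathbb{R}^{d})\bigr)^{\varkappa}$ (the embedding $E_{1}\hookrightarrow E_{0}$ being dense and continuous since $\varepsilon,\delta>0$), Proposition~\ref{prop-quadratic-interp} over $\mathbb{R}^{d}$ and the commutation with sums give $(E_{0},E_{1})_{\psi}=\bigl(H^{s,\varphi}_{2}(\mathbb{R}^{d})\bigr)^{\varkappa}$. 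Combining this with the complemented-subspace property for $P$ and with the smoothness-independent identifications of the previous paragraph, I would conclude that
\[
H^{s,\varphi}_{2}(\Gamma)\cong\mathrm{ran}\,P=P\bigl((E_{0},E_{1})_{\psi}\bigr)=(PE_{0},PE_{1})_{\psi}\cong\bigl(H^{s-\varepsilon}_{2}(\Gamma),H^{s+\delta}_{2}(\Gamma)\bigr)_{\psi},
\]
all isomorphisms being norm equivalences realized by the fixed operators $T$ and $K$; this is the claimed equality.

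I expect the only non-routine ingredient to be the one flagged in the first paragraph: the invariance of the refined Hilbert scale over $\mathbb{R}^{d}$ under $C^{\infty}$-diffeomorphisms and $C^{\infty}_{0}$-multipliers, needed for the boundedness of $K$ and for the chart-independence. This is the point to verify with care, but it follows from the Sobolev case via Proposition~\ref{prop-quadratic-interp} and the interpolation property, exactly as in the proof of Theorem~\ref{th-localization}; everything else is the standard retraction/coretraction bookkeeping, parallel in structure to the treatment of the $p=2$ case in \cite{MikhailetsMurach14}.
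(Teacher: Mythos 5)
Your argument is correct and follows essentially the route of the proof the paper points to (\cite[Theorem~2.2]{MikhailetsMurach14}) and of the paper's own proofs of Theorems~\ref{th-localization} and~\ref{th-C-interp-Besov}: localize via the flattening/sewing pair $T$, $K$, reduce to Proposition~\ref{prop-quadratic-interp} over $\mathbb{R}^{d}$ combined with $H^{s,\varphi}_{2}(\mathbb{R}^{d})=B^{s,\varphi}_{2}(\mathbb{R}^{d})$, and transfer the formula by the retraction/coretraction mechanism, the boundedness of $K$ resting on multiplier and diffeomorphism invariance obtained by interpolation from the Sobolev case. Your explicit use of the projector $P=TK$ and the complemented-subspace property of $(\cdot,\cdot)_{\psi}$ is only a cosmetic variant of the paper's ``same retraction and coretraction'' argument, and the remaining details (regularity of the pair $(PE_{0},PE_{1})$, equivalence of the $\ell^{1}$- and $\ell^{2}$-sum norms on the finite product) are routine.
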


The proof of this property is given in \cite[Theorem~2.2]{MikhailetsMurach14}.

\begin{proof}[Proof of Theorems $\ref{th-Besov-properties}$ and $\ref{th-C-interp-Besov}$.]
Let us first substantiate \eqref{C-interp-Besov} in the $W=\mathbb{R}^{d}$ case. We need the Banach space
\begin{equation}\label{space-l(E)}
l^{s,\varphi}_{p}(E):=\bigg\{w:=(w_k)_{k=0}^{\infty}\subset E:\|w,l^{s,\varphi}_{p}(E)\|^{p}:=
\sum_{k=0}^{\infty}(2^{sk}\varphi(2^{k})\|w_k,E\|)^{p}<\infty\bigg\},
\end{equation}
endowed with the norm $\|\cdot,l^{s,\varphi}_{p}(E)\|$, where $E:=L_{p}(\mathbb{R}^{d})$. According to \cite[Theorem~2.5(ii)]{CobosFernandez88}, every space $B^{s,\varphi}_{p}(\mathbb{R}^{d})$ is a retract of $l^{s,\varphi}_{p}(L_{p}(\mathbb{R}^{d}))$, where the retraction operator $\mathcal{R}$ and the coretraction operator $\mathcal{T}$ can be chosen independent of $s$, $\varphi$, and~$p$. This means that we have certain bounded linear operators
\begin{equation*}
\mathcal{R}:l^{s,\varphi}_{p}(L_{p}(\mathbb{R}^{d}))\to B^{s,\varphi}_{p}(\mathbb{R}^{d})
\quad\mbox{and}\quad
\mathcal{T}:B^{s,\varphi}_{p}(\mathbb{R}^{d})\to l^{s,\varphi}_{p}(\mathbb{R}^{d})
\end{equation*}
such that $\mathcal{R}\mathcal{T}$ is the identity mapping. Hence,
\begin{equation}\label{C-interp-B-space}
[B^{s_0,\varphi_0}_{p_{0}}(\mathbb{R}^{d}),
B^{s_1,\varphi_1}_{p_{1}}(\mathbb{R}^{d})]_{\theta}
\end{equation}
is a retract of the space
\begin{equation*}
\bigl[l_{p_0}^{s_0,\varphi_0}(L_{p_0}(\mathbb{R}^{d})),
l_{p_1}^{s_1,\varphi_1}(L_{p_1}(\mathbb{R}^{d}))\bigr]_{\theta}=
l^{s,\varphi}_{p}(L_{p}(\mathbb{R}^{d})).
\end{equation*}

Let us justify this equality. The space $l^{s,\varphi}_{p}(E)$ coincides with the Banach space $l_{p}\{E_{k}\}$ of all sequences $w:=(w_k)_{k=0}^{\infty}$ such that each $w_k\in E_{k}$ and
\begin{equation*}
\|w,l_{p}\{E_{k}\}\|:=
\Biggl(\,\sum_{k=0}^{\infty}\|w_k,E_{k}\|^{p}\Biggr)^{1/p}=
\|w,l^{s,\varphi}_{p}(E)\|<\infty,
\end{equation*}
where $E_{k}:=2^{sk}\varphi(2^{k})E$ denotes the space $E$ endowed with the proportional norm $\|\cdot,E_{k}\|:=2^{sk}\varphi(2^{k})\|\cdot,E\|$. Hence,
\begin{align*}
&\bigl[l_{p_0}^{s_0,\varphi_0}(L_{p_0}(\mathbb{R}^{d})),
l_{p_1}^{s_1,\varphi_1}(L_{p_1}(\mathbb{R}^{d}))\bigr]_{\theta}
\nonumber\\
&=\bigl[l_{p_0}\{2^{s_0k}\varphi_0(2^{k})L_{p_0}(\mathbb{R}^{d})\},
l_{p_1}\{2^{s_1k}\varphi_1(2^{k})L_{p_1}(\mathbb{R}^{d})\}\bigr]_{\theta}
\\
&=l_{p}\bigl\{[2^{s_0k}\varphi_0(2^{k})L_{p_0}(\mathbb{R}^{d}),
2^{s_1k}\varphi_1(2^{k})L_{p_1}(\mathbb{R}^{d})]_{\theta}\bigr\}
\\
&=l_{p}\bigl\{2^{sk}\varphi(2^{k})[L_{p_0}(\mathbb{R}^{d}), L_{p_1}(\mathbb{R}^{d})]_{\theta}\bigr\}=
l_{p}\bigl\{2^{sk}\varphi(2^{k})L_{p}(\mathbb{R}^{d})\bigr\}
\\
&=l^{s,\varphi}_{p}(L_{p}(\mathbb{R}^{d}))\nonumber.
\end{align*}
These equalities hold true with equalities of norms. The second equality is due to \cite[Theorems 1.18.1 and Remark 1.18.4/1]{Triebel95}, the third one is valid because the complex interpolation functor is exact of type $\theta$ \cite[Theorem 1.9.3(a)]{Triebel95}, and the fourth one is due to \cite[Theorem 1.18.4 and Remark 1.18.4/2]{Triebel95}.

Thus, the spaces $B^{s,\varphi}_{p}(\mathbb{R}^{d})$ and \eqref{C-interp-B-space} are retracts of the same space  $l^{s,\varphi}_{p}(L_{p}(\mathbb{R}^{d}))$ with the same retraction and coretraction operators. Therefore, equality  \eqref{C-interp-Besov} holds true up to equivalence of norms in the $W=\mathbb{R}^{d}$ case.

This equality for $W=\Gamma$ is deduced from the above case with the help of the flattening mapping
\begin{equation*}
T_0:v\mapsto(\mathcal{O}((\chi_{1,0}\,v)\circ\pi_{1,0}),\ldots,
\mathcal{O}((\chi_{\varkappa,0}\,v)\circ\pi_{\varkappa,0})),
\end{equation*}
defined for all $v\in\mathcal{D}'(\Gamma)$, and the sewing mapping
\begin{equation*}
K_0:(w_{1},\ldots,w_{\varkappa})\mapsto\sum_{j=1}^{\varkappa}\,
\mathcal{O}_{\Gamma}((\eta_{j,0}\,w_{j})\circ\pi_{j,0}^{-1}),
\end{equation*}
defined for all $w_{1},\ldots,w_{\varkappa}\in\mathcal{S}'(\mathbb{R}^{d})$.
Here, $\mathcal{O}_{\Gamma}$ denotes the operator of the extension of a distribution by zero onto~$\Gamma$, and each function $\eta_{j,0}\in C^{\infty}_{0}(\mathbb{R}^{n})$ satisfies the conditions $\mathrm{supp}\,\eta_{j,0}\subset V_{j,0}$ and $\eta_{j,0}=1$ on the set $\pi_{j,0}^{-1}(\mathrm{supp}\,\chi_{j,0})$. We see that $K_{0}T_{0}v=v$ for every $v\in\mathcal{D}'(\Gamma)$ and that the flattening mapping sets an isometric operator
\begin{equation}\label{T0-operetor}
T_0:B^{s,\varphi}_{p}(\Gamma)\to
(B^{s,\varphi}_{p}(\mathbb{R}^{d}))^{\varkappa}
\end{equation}
whenever $s\in\mathbb{R}$, $\varphi\in\Upsilon$, and $1<p<\infty$. Moreover, the sewing mapping sets a bounded operator
\begin{equation}\label{K0-operetor}
K_0:(B^{s,\varphi}_{p}(\mathbb{R}^{d}))^{\varkappa}\rightarrow B^{s,\varphi}_{p}(\Gamma)
\end{equation}
for the same parameters.

Indeed, whatever
\begin{equation*}
w=(w_{1},\ldots,w_{\varkappa})\in
(B^{s,\varphi}_{p}(\mathbb{R}^{d}))^{\varkappa},
\end{equation*}
we have the following:
\begin{align*}
\bigl\|K_0 w, B^{s,\varphi}_{p}(\Gamma)\bigr\| &=\sum_{l=1}^{\varkappa}\;\bigl\|(\chi_{l,0}\,K_0 w)
\circ\pi_{l,0}, B^{s,\varphi}_{p}(\mathbb{R}^{d})\bigr\|\\
&=\sum_{l=1}^{\varkappa}\,\Bigl\|\Bigl(\chi_{l,0}\,\sum_{j=1}^{\varkappa}
\mathcal{O}_{\Gamma}((\eta_{j,0}w_{j})\circ\pi_{j,0}^{-1})\Bigr)
\circ\pi_{l,0},B^{s,\varphi}_{p}(\mathbb{R}^{d})\,\Bigr\|\\
&=\sum_{l=1}^{\varkappa}\;\Bigl\|\,
\sum_{j=1}^{\varkappa}(\zeta_{j,l}\,w_{j})
\circ\lambda_{j,l},B^{s,\varphi}_{p}(\mathbb{R}^{d})\,\Bigr\| \\
&\leq c\sum_{j=1}^{\varkappa}
\|w_{j},B^{s,\varphi}_{p}(\mathbb{R}^{d})\|.
\end{align*}
Here, $\zeta_{j,l}:=\mathcal{O}(\chi_{l,0}\circ\pi_{j,0})\,\eta_{j,0}\in
C_{0}^{\infty}(\mathbb{R}^{d})$, and
$\lambda_{j,l}:\mathbb{R}^{d}\,\leftrightarrow\,\mathbb{R}^{d}$ is a $C^{\infty}$-diffeomorphism such that $\lambda_{j,l}=\pi_{j,0}^{-1}\circ\pi_{l,0}$ in a neighbourhood of the set $\mathrm{supp}\,\zeta_{j,l}$ and that $\lambda_{j,l}(x)=x$ for all points $x\in\mathbb{R}^{d}$ with $\|x\|\gg1$. The number $c>0$ does not depend on $w$ because the operator of the multiplication by a function of class $C_{0}^{\infty}(\mathbb{R}^{d})$ and the operator of the change of variables given by $\lambda_{j,l}$ are bounded on $B^{s,\varphi}_{p}(\mathbb{R}^{d})$. This is known for the classical Besov spaces (see, e.g., \cite[Theorems 2.8.2(i) and 2.10.2(i)]{Triebel83}) and then follows for every $B^{s,\varphi}_{p}(\mathbb{R}^{d})$
by the interpolation in view of Proposition~\ref{prop-quadratic-interp} and Theorem~\ref{th-C-interp-Besov-got} proved above in the $W=\mathbb{R}^{d}$ case.

Thus, each space $B^{s,\varphi}_{p}(\Gamma)$ is a retract of  $(B^{s,\varphi}_{p}(\mathbb{R}^{d}))^{\varkappa}$ with the retraction operator $K_{0}$ and coretraction operator $T_{0}$. Hence, $B^{s,\varphi}_{p}(\Gamma)$ is complete because the space  $(B^{s,\varphi}_{p}(\mathbb{R}^{d}))^{\varkappa}$ is complete. Moreover, if a set $M$ is dense in the latter space, then $K_{0}(M)$ is dense in $B^{s,\varphi}_{p}(\Gamma)$. Hence, $B^{s,\varphi}_{p}(\Gamma)$ is separable, and the set $K_{0}((C^{\infty}_{0}(\mathbb{R}^{d}))^{\varkappa})\subset C^{\infty}(\Gamma)$ is dense in $B^{s,\varphi}_{p}(\Gamma)$, in view of the relevant properties of $B^{s,\varphi}_{p}(\mathbb{R}^{d})$.

It follows from the boundedness of operators \eqref{T0-operetor} and
\eqref{K0-operetor} and from the interpolation formula \eqref{C-interp-Besov} proved for $W=\mathbb{R}^{d}$ that the flattening and sewing operators are bounded on the pairs of spaces
\begin{equation*}
T_0:[B^{s_0,\varphi_0}_{p_{0}}(\Gamma),
B^{s_1,\varphi_1}_{p_{1}}(\Gamma)]_{\theta}\to
(B^{s,\varphi}_{p}(\mathbb{R}^{d}))^{\varkappa}
\end{equation*}
and
\begin{equation*}
K_0:(B^{s,\varphi}_{p}(\mathbb{R}^{d}))^{\varkappa}\to
[B^{s_0,\varphi_0}_{p_{0}}(\Gamma),
B^{s_1,\varphi_1}_{p_{1}}(\Gamma)]_{\theta}.
\end{equation*}
Thus, the spaces $B^{s,\varphi}_{p}(\Gamma)$ and $[B^{s_0,\varphi_0}_{p_{0}}(\Gamma),
B^{s_1,\varphi_1}_{p_{1}}(\Gamma)]_{\theta}$ are retracts of the same space $(B^{s,\varphi}_{p}(\mathbb{R}^{d}))^{\varkappa}$ with the same retraction and coretraction operators. Therefore, equality \eqref{C-interp-Besov} holds true up to equivalence of norms in the $W=\Gamma$ case.

Now the independence of the space $B^{s,\varphi}_{p}(\Gamma)$ of $\{\Gamma_{j}\}$, $\{\pi_{j,0}\}$, and $\{\chi_{j,0}\}$ follows by \eqref{C-interp-Besov} from the known fact that this independence holds true for the classical Besov spaces (see, e.g., \cite[Proposition 3.2.3(ii)]{Triebel83}).
\end{proof}

\begin{remark}
Applications of the complex interpolation to Besov spaces of generalized smoothness over $\mathbb{R}^{d}$ were given by Triebel \cite[Theorem~4.2/2]{Triebel77III}, Besov \cite[Theorem~2]{Besov05}, Knopova \cite[Lemma~1]{Knopova06}, Baghdasaryan \cite[Theorem 2.4(i)]{Baghdasaryan10}, Drihem \cite[Theorem~4.24]{Drihem23CMUC}. These results do not cover Theorem~\ref{th-C-interp-Besov} in the $W=\mathbb{R}^{d}$ case. The Besov spaces of generalized smoothness used by Triebel do not coincide with the classical Besov spaces for corresponding (power) smoothness indexes \cite[Theorem~6.2/1]{Triebel77III}. The result by Besov concerns spaces of locally integrable functions (the $s>0$ case in relation to $B^{s,\varphi}_{p}(\mathbb{R}^{d})$). Knopova and Baghdasaryan interpolate between distribution spaces whose generalized (nonclassical) smoothness is characterized in a certain way by the same non-numeric parameter. The result by Drihem concerns homogeneous distribution spaces. However, it is noted \cite[Remark~4.29]{Drihem23CMUC} that this result can be extended to inhomogeneous spaces, specifically to Besov spaces of generalized smoothness \cite[Example~4.6]{Drihem23AOT}. Note that Besov \cite{Besov05} and Drihem \cite{Drihem23CMUC} studied wide classes of distribution spaces whose variable generalized smoothness is given by a sequence of weight functions depending on spatial variables.
\end{remark}

\begin{theorem}\label{th-trace}
Let $1<p<\infty$, $s>1/p$, and $\varphi\in\Upsilon$. Then the trace mapping $R_{\Gamma}:u\mapsto u\!\upharpoonright\!\Gamma$, where $u\in C^{\infty}(\overline{\Omega})$, extends uniquely (by continuity) to a bounded linear operator
\begin{equation}\label{trace-operator}
R_{\Gamma}:H^{s,\varphi}_{p}(\Omega)\to B^{s-1/p,\varphi}_{p}(\Gamma).
\end{equation}
This operator is onto and has a bounded linear right inverse operator
\begin{equation}\label{trace-operator-reverse}
S_{\Gamma}: B^{s-1/p,\varphi}_{p}(\Gamma)\rightarrow H^{s,\varphi}_{p}(\Omega)
\end{equation}
such that $S_{\Gamma}$ does not depend on $p$, $s$, and $\varphi$ as a mapping. Hence, the equivalence of norms
\begin{equation}\label{trace-norms-equivalence}
\|v,B^{s-1/p,\varphi}_{p}(\Gamma)\|\asymp
\inf\bigl\{\|u,H^{s,\varphi}_{p}(\Omega)\|:u\in H^{s,\varphi}_{p}(\Omega), v=R_{\Gamma}u\bigr\},
\end{equation}
holds true with respect to $v\in B^{s-1/p,\varphi}_{p}(\Gamma)$.
\end{theorem}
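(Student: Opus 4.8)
The plan is to reduce the trace theorem to its classical counterpart (the case $\varphi(\cdot)\equiv1$) via the two-step interpolation machinery established in Proposition~\ref{prop-quadratic-interp}, Proposition~\ref{prop-quadratic-interp-Gamma}, and Theorems~\ref{th-C-interp-Sobolev-got} and~\ref{th-C-interp-Besov-got}. First I would recall the classical trace theorem for Sobolev and Besov spaces over $\Omega$: for $s>1/p$ the trace map $R_{\Gamma}$ is bounded from $H^{s}_{p}(\Omega)$ onto $B^{s-1/p}_{p}(\Gamma)$ and admits a bounded linear right inverse $S_{\Gamma}$ that can be chosen independently of $s$ and $p$ (for instance, a fixed extension operator built from the local charts $\pi_{j}$, the resolution of unity $\chi_{j}$, and a universal half-space coretraction; see, e.g., \cite[Theorem~3.3.3]{Triebel83}). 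The key point is that one fixed mapping $S_{\Gamma}$ works simultaneously for the whole scale. Once this is in hand, the Hilbert-space case $p=2$ follows from the quadratic interpolation: write $H^{s,\varphi}_{2}(\Omega)=(H^{s-\varepsilon}_{2}(\Omega),H^{s+\delta}_{2}(\Omega))_{\psi}$ and $B^{s-1/2,\varphi}_{2}(\Gamma)=(B^{s-1/2-\varepsilon}_{2}(\Gamma),B^{s-1/2+\delta}_{2}(\Gamma))_{\psi}$ with the same function parameter $\psi$ from \eqref{quadratic-interp-parameter}, choosing $\varepsilon>0$ small enough that $s-\varepsilon>1/2$; then $R_{\Gamma}$ and $S_{\Gamma}$, being bounded on both endpoint pairs, are bounded on the interpolated spaces, and the retraction identity $R_{\Gamma}S_{\Gamma}=\mathrm{id}$ persists, giving surjectivity and \eqref{trace-norms-equivalence} for $p=2$.

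Next I would pass to general $p\in(1,\infty)$ by the complex interpolation of Theorems~\ref{th-C-interp-Sobolev-got} and~\ref{th-C-interp-Besov-got}. For $p\neq2$, pick $p_{1}$ with $1<p_{1}<p<2$ or $2<p<p_{1}<\infty$ and the corresponding $\theta\in(0,1)$ from \eqref{p-2-interp}; choose the main smoothness indices $s_{0},s_{1}$ (with $s=(1-\theta)s_{0}+\theta s_{1}$) so that both $s_{0}>1/2$ and $s_{1}>1/p_{1}$, which is possible because the constraint is an open one and $s>1/p$; set $\varphi_{0}:=\varphi^{1/(1-\theta)}$. Then
\begin{equation*}
H^{s,\varphi}_{p}(\Omega)=[H^{s_{0},\varphi_{0}}_{2}(\Omega),H^{s_{1}}_{p_{1}}(\Omega)]_{\theta},
\qquad
B^{s-1/p,\varphi}_{p}(\Gamma)=[H^{s_{0}-1/2,\varphi_{0}}_{2}(\Gamma),B^{s_{1}-1/p_{1}}_{p_{1}}(\Gamma)]_{\theta},
\end{equation*}
where the second identity uses Theorem~\ref{th-C-interp-Besov-got} together with the Hilbert-space coincidence $B^{\sigma,\omega}_{2}=H^{\sigma,\omega}_{2}$ recorded in the excerpt, and where one checks $(1-\theta)(s_{0}-1/2)+\theta(s_{1}-1/p_{1})=s-1/p$ from \eqref{p-2-interp}. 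Since $R_{\Gamma}$ is bounded $H^{s_{0},\varphi_{0}}_{2}(\Omega)\to B^{s_{0}-1/2,\varphi_{0}}_{2}(\Gamma)$ (the $p=2$ case just proved) and $H^{s_{1}}_{p_{1}}(\Omega)\to B^{s_{1}-1/p_{1}}_{p_{1}}(\Gamma)$ (classical), and likewise $S_{\Gamma}$ maps each endpoint Besov space back into the corresponding Sobolev space, complex interpolation yields the bounded operators \eqref{trace-operator} and \eqref{trace-operator-reverse}; the relation $R_{\Gamma}S_{\Gamma}=\mathrm{id}$ on the dense set $C^{\infty}(\Gamma)$ extends by continuity, so $R_{\Gamma}$ is onto and $\eqref{trace-norms-equivalence}$ follows from the standard retract estimate $\|v\|\asymp\inf\{\|u\|:R_{\Gamma}u=v\}$. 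Uniqueness of the extension of the classical trace map is immediate because $C^{\infty}(\overline{\Omega})$ is dense in $H^{s,\varphi}_{p}(\Omega)$.

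The main obstacle, and the step deserving genuine care, is the simultaneous choice of a \emph{single} right inverse $S_{\Gamma}$ that is bounded on every endpoint space appearing in \emph{both} interpolation procedures (the quadratic one with parameter $\psi$ and the complex one with parameter $\theta$). This is what makes the retraction argument go through and what ultimately gives the $p$-, $s$-, and $\varphi$-independence asserted in the theorem; it forces us to use a concrete universal extension operator (localization by $\{\chi_{j}\}$ and $\{\pi_{j}\}$, then a fixed half-space coretraction of the type in \cite[Section~2.7.2]{Triebel83} or a Rychkov-type operator) rather than abstract interpolation of right inverses, since the latter would in general depend on the parameters. A secondary technical point is verifying that the boundedness of such an $S_{\Gamma}$ on the nonclassical Hilbert endpoints $H^{s_{0},\varphi_{0}}_{2}(\Omega)$ and on $B^{s_{0}-1/2,\varphi_{0}}_{2}(\Gamma)$ follows from its classical boundedness by Proposition~\ref{prop-quadratic-interp} and Proposition~\ref{prop-quadratic-interp-Gamma} — exactly the same bootstrapping device already used for the extension operator $T$ in the proof of Theorems~\ref{th-C-interp-Sobolev} and~\ref{th-C-interp-Sobolev-got}. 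Everything else is bookkeeping of smoothness indices and the routine exactness properties of the interpolation functors.
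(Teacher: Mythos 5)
Your proposal is correct and follows essentially the same route as the paper: reduce to the classical ($\varphi(\cdot)\equiv1$) and Hilbert ($p=2$) base cases and then complex-interpolate the pair $R_{\Gamma}$, $S_{\Gamma}$ between the endpoints supplied by Theorems~\ref{th-C-interp-Sobolev-got} and~\ref{th-C-interp-Besov-got}, using one universal right inverse so that the retraction identity survives interpolation. The only cosmetic difference is that you dispose of the index bookkeeping for $1<p<2$ and $p>2$ in one stroke, by noting that $s>1/p=(1-\theta)/2+\theta/p_{1}$ makes the simultaneous constraints $s_{0}>1/2$ and $s_{1}>1/p_{1}$ satisfiable on the line $s=(1-\theta)s_{0}+\theta s_{1}$, whereas the paper treats the two cases separately with explicit choices of $s_{0}$ and $s_{1}$.
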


As usual, $u\!\upharpoonright\!\Gamma$ denotes the restriction of the function $u$ to $\Gamma$, and $C^{\infty}(\overline{\Omega})$ stands for  the set of the restrictions of all functions $w\in C^{\infty}(\mathbb{R}^{n})$ to $\overline{\Omega}$. Since $\Omega$ is bounded, $C^{\infty}(\overline{\Omega})=C^{\infty}_{0}(\overline{\Omega})$ is a dense subset of $H^{s,\varphi}_{p}(\Omega)$.

\begin{proof}
This result is known for $\varphi(\cdot)\equiv1$ (e.g., it is contained in \cite[Lemma~5.4.4]{Triebel95}) and for $p=2$ \cite[Theorem~3.5]{MikhailetsMurach14}. We will deduce it from this fact by the interpolation Theorems \ref{th-C-interp-Sobolev-got} and~\ref{th-C-interp-Besov-got}. We need to consider the cases $1<p<2$ and $2<p<\infty$ separately, which is caused by the restriction $s>1/p$.

We first consider the case where $1<p<2$. Choose real numbers $p_{1}>1$ and $s_{1}$ so that $1/p<1/p_{1}<s_{1}<s$, and note that $1<p_{1}<p<2$. We then define the parameters $\theta$, $s_{0}$, and $\varphi_{0}$ according to Theorem~\ref{th-C-interp-Sobolev-got} and see that $s_{0}>s>1/p>1/2$ in the case under consideration. Therefore, we have the following two pairs of bounded linear operators:
\begin{gather}\label{pair-trace-operators}
R_{\Gamma}:H^{s_0,\varphi_0}_{2}(\Omega)\rightarrow H^{s_{0}-1/2,\varphi_0}_{2}(\Gamma)\quad\mbox{and}\quad
R_{\Gamma}:H^{s_1}_{p_1}(\Omega)\rightarrow B^{s_1-1/p_1}_{p_1}(\Gamma),\\
S_{\Gamma}:H^{s_{0}-1/2,\varphi_0}_{2}(\Gamma)\rightarrow H^{s_0,\varphi_0}_{2}(\Omega)\quad\mbox{and}\quad
S_{\Gamma}:B^{s_1-1/p_1}_{p_1}(\Gamma)\rightarrow H^{s_1}_{p_1}(\Omega). \label{pair-trace-operators-reverse}
\end{gather}
According to Theorems \ref{th-C-interp-Sobolev-got}  and    \ref{th-C-interp-Besov-got}, the equalities \eqref{C-interp-Sobolev-got}  and
\begin{equation*}
B^{s-1/p,\varphi}_{p}(\Gamma)=
[H^{s_{0}-1/2,\varphi_0}_{2}(\Gamma),B^{s_1-1/p_1}_{p_1}(\Gamma)]_{\theta}
\end{equation*}
hold true up to equivalence of norms. Applying the complex interpolation with the parameter $\theta$ to these pairs of operators, we obtain  bounded operators \eqref{trace-operator} and \eqref{trace-operator-reverse} such that $\eqref{trace-operator}$ is an extension by continuity of the mapping $C^{\infty}(\overline{\Omega})\ni u\mapsto u\!\upharpoonright\!\Gamma$ and that $\eqref{trace-operator-reverse}$ is a right-inverse of \eqref{trace-operator}. Hence, the operator \eqref{trace-operator} is onto, and \eqref{trace-norms-equivalence} holds true.

Let us now consider the second case where $p>2$. We choose a number $p_{1}>p$ (e.g., put $p_{1}:=2p$), define $\theta\in(0,1)$ by formula \eqref{p-2-interp}, choose a number $s_{1}>1/p_{1}$ so that $s-1/p>\theta(s_1-1/p_{1})$, and define a number $s_{0}$ by the formula $s=(1-\theta)s_{0}+\theta s_{1}$. Then
\begin{equation*}
(1-\theta)\Bigl(s_{0}-\frac{1}{2}\Bigr)=s-\theta s_1-\frac{1-\theta}{2}=
s-\theta s_1-\frac{1}{p}+\frac{\theta}{p_1}>0,
\end{equation*}
i.e. $s_{0}>1/2$. We finally define $\varphi_{0}$ according to Theorem~\ref{th-C-interp-Sobolev-got} and have the pairs \eqref{pair-trace-operators} and \eqref{pair-trace-operators-reverse} of bounded linear operators. Since the operators in each pair coincide on the intersection of their domains, they naturally induce linear operators on the algebraic sums of their domains and, hence, can be interpolated. (In the first case, the second operator of each pair is an extension of the first one because $s_{1}<s_{0}$ and $p_{1}<2$.) We now complete the proof in the same manner as in the first case.
\end{proof}

\begin{remark}\label{rem-trace-hyperplane}
A version of Theorem~\ref{th-trace} is valid for the trace mapping $R_{0}:u\mapsto u\!\upharpoonright\!\mathbb{R}^{n}_{0}$, where $u\in C^{\infty}_{0}(\mathbb{R}^{n})$ or $u\in C^{\infty}_{0}(\overline{\mathbb{R}^{n}_{+}})$, with an analogous proof.
A right inverse $S_{0}$ of $R_{0}$ can be constructed with the help of
the isomorphism
\begin{equation}\label{Dirichlet-isomorphism}
(1-\Delta,R_{0}):H^{s}_{p}(\mathbb{R}^{n}_{+})\leftrightarrow
H^{s-2}_{p}(\mathbb{R}^{n}_{+})\times B^{s-1/p}_{p}(\mathbb{R}^{n-1}),
\end{equation}
where $s>1/p$. This isomorphism is induced by the Dirichlet problem for the elliptic equation $u-\Delta u=f$ in the half-space $\mathbb{R}^{n}_{+}$ (as usual, $\Delta$ is the Laplace operator), we naturally identifying $\mathbb{R}^{n}_{0}$ with $\mathbb{R}^{n-1}$. The operator $S_{0}$ can be defined as the restriction of the inverse of \eqref{Dirichlet-isomorphism} to the space $\{0\}\times B^{s-1/p}_{p}(\mathbb{R}^{n-1})$. Thus, $B^{s-1/p,\varphi}_{p}(\mathbb{R}^{n-1})$ is the trace space for $H^{s,\varphi}_{p}(\mathbb{R}^{n})$ and $H^{s,\varphi}_{p}(\mathbb{R}^{n}_{+})$ whenever $1<p<\infty$, $s>1/p$, and $\varphi\in\Upsilon$.
\end{remark}

\begin{remark}
A necessary and sufficient condition for the Sobolev space of generalized smoothness over $\mathbb{R}^{n}$ under which every function from the space has a trace on a hyperplane was obtained by H\"ormander \cite[Theorem 2.2.8]{Hermander63} in the case where the integrability exponent $p$ equals $2$ (see also Volevich and Paneah's paper \cite[Theorems 6.1 and~6.2]{VolevichPaneah65}) and by Goldman \cite[Theorem~1]{Goldman76} for each $p\in(1,\infty)$. The result by H\"ormander concerns some (generally, anisotropic) spaces of generalized smoothness with the integrability exponent $p\in[1,\infty]$ and also describes their trace spaces. These spaces coincide with Sobolev spaces of generalized smoothness if and only if $p=2$. Goldman \cite[Theorem~2]{Goldman76} indicated a Besov space of generalized smoothness that contains these traces but did not assert that it is the narrowest space. The trace spaces were found by Kalyabin \cite[\S~1]{Kalyabin78} for certain anisotropic Sobolev spaces of generalized smoothness; however, the class of these spaces does not contain isotropic ones. For isotropic Sobolev spaces of generalized smoothness, the description of the trace spaces contains in Kalyabin's later result \cite[Section~1, Main Theorem]{Kalyabin81} devoted to Triebel--Lizorkin spaces of generalized smoothness (both isotropic and anisotropic). Goldman \cite[Corollary~3]{Goldman76} and Merucci \cite[Theorem~15]{Merucci84} indicated classes of Sobolev spaces of generalized smoothness for which the traces belong to the Besov space whose smoothness index equals $t^{-1/p}\omega(t)$ provided that the function $\omega$ is the smoothness index of the Sobolev space. However, it was not shown that this Besov space is the trace space (see, e.g., \cite[Remark~5]{Merucci84}). The spaces $H^{s,\varphi}_{p}(\mathbb{R}^{n})$ for which $s>1/p$ and $\varphi\in\Upsilon$ belong to these classes, with $\omega(t)\equiv t^{s}\varphi(t)$.
\end{remark}

\begin{remark}
The hypothesis $s>1/p$ is essential in Theorem~\ref{th-trace}. Indeed, since the set $C^{\infty}_{0}(\Omega)$ is dense in the Sobolev space $H^{1/p}_{p}(\Omega)$ \cite[Theorem 4.3.2/1(a)]{Triebel95}, the trace operator $R_{\Gamma}$ cannot be well defined as a continuous mapping from whole $H^{1/p}_{p}(\Omega)$ to $\mathcal{D}'(\Gamma)$, for otherwise we would get by closure that $R_{\Gamma}u=0$ for every $u\in H^{1/p}_{p}(\Omega)$, e.g, for $u(\cdot)=1$ on $\overline{\Omega}$. Here, $C^{\infty}_{0}(\Omega)$ stands for the set of all functions $u\in C^{\infty}(\overline{\Omega})$ such that $\operatorname{supp}u\subset\Omega$. Hence, the continuous trace mapping $R_{\Gamma}:H^{s,\varphi}_{p}(\Omega)\to\mathcal{D}'(\Gamma)$ is not well defined whatever $s<1/p$ and $\varphi\in\Upsilon$, in view of the continuous embedding $H^{1/p}_{p}(\Omega)\hookrightarrow H^{s,\varphi}_{p}(\Omega)$. As to the limiting case $s=1/p$, the continuous trace mapping $R_{\Gamma}:H^{1/p,\varphi}_{p}(\Omega)\to\mathcal{D}'(\Gamma)$ is well defined if and only if the function parameter $\varphi\in\Upsilon$ satisfies
\begin{equation}\label{integral-condition}
\int\limits_{1}^{\infty}\frac{dt}{t\,\varphi^{p'}(t)}<\infty,
\end{equation}
which follows from \cite[Theorem~1]{Goldman76}; as usual, $1/p+1/p'=1$.
If \eqref{integral-condition} holds true, then the trace space for $H^{1/p,\varphi}_{p}(\Omega)$ differs from $B^{0,\varphi}_{p}(\Gamma)$. This follows, e.g., from \cite[Theorem 2.2.8]{Hermander63} or \cite[Theorem~6.2]{VolevichPaneah65} in the $p=2$ case, as is shown in \cite[Theorem 3.8~b)]{MikhailetsMurach06UMJ3}. The same concerns the trace mapping $R_{0}$ indicated in Remark~\ref{rem-trace-hyperplane}. Running ahead, note that the Fredholm property of elliptic boundary-value problems in $\Omega$ will not hold true if we suppose \eqref{integral-condition} and take $H^{1/p,\varphi}_{p}(\Omega)$ as a solution space of these problems. This is explained in \cite[Section~9.3]{MikhailetsMurach12BJMA2} in the $p=2$ case.
\end{remark}

\section{Applications to elliptic problems}\label{sec3}

Recall that $\Omega$ is a bounded open Euclidean domain of dimension $n\geq2$ with a boundary $\Gamma$ of class $C^{\infty}$. We choose integers $q\geq1$ and $m_{1},\ldots,m_{q}\geq0$ arbitrarily and consider an elliptic boundary-value problem of the form
\begin{gather}\label{ell-equ}
Au=f\quad\mbox{in}\quad\Omega,\\
B_{j}u=g_{j}\quad\mbox{on}\quad\Gamma,\quad j=1,...,q. \label{bound-cond}
\end{gather}
Here, $A$ is a linear partial differential operator (PDO) on $\overline{\Omega}$ of order $2q$, whereas each $B_{j}$ is a linear boundary PDO on $\Gamma$ of order $m_{j}$. We suppose that all coefficients of the PDOs $A$ and $B_{j}$ belong to $C^{\infty}(\overline{\Omega})$ and $C^{\infty}(\Gamma)$, resp. Put $m:=\max\{m_{1},\ldots,m_{q}\}$, $B:=(B_{1},\ldots,B_{q})$, and $g:=(g_{1},\ldots,g_{q})$. We admit the case where $m\geq2q$.

Recall that the ellipticity of this problem means that the PDO $A$ is properly elliptic on $\overline{\Omega}$ and that the collection $B$ of boundary PDOs satisfies the Lopatinskii condition with respect to $A$ on $\Gamma$ (see, e.g., \cite[Section~1.2]{Agranovich97}). (If $n\geq3$, then the elipticity of $A$ implies its proper ellipticity.)

\begin{theorem}\label{th-Fredholm}
Let $p\in(1,\infty)$, $s>m+1/p$, and $\varphi\in\Upsilon$. Then the mapping
\begin{equation}\label{mapping-(A,B)}
u\mapsto(Au,B_{1}u,\ldots,B_{q}u)=(Au,Bu),\quad\mbox{where}\;\;
u\in C^{\infty}(\overline{\Omega}),
\end{equation}
extends uniquely (by continuity) to a bounded linear operator
\begin{equation}\label{(A,B)}
(A,B):H^{s,\varphi}_{p}(\Omega)\to H^{s-2q,\varphi}_{p}(\Omega)\times
\prod_{j=1}^{q}B^{s-m_{j}-1/p,\varphi}_{p}(\Gamma)=:
\Xi^{s-2q,\varphi}_{p} (\Omega,\Gamma).
\end{equation}
This operator is Fredholm. Its kernel lies in $C^{\infty}(\overline{\Omega})$ and together with its index does not depend on the parameters $s$, $\varphi$, and~$p$. The target space of this operator splits into the direct sum
\begin{equation}\label{M+range}
\Xi^{s-2q,\varphi}_{p} (\Omega,\Gamma)=M\dotplus(A,B)(H^{s,\varphi}_{p}(\Omega))
\end{equation}
for a certain finite-dimensional space $M\subset C^{\infty}(\overline{\Omega})\times(C^{\infty}(\Gamma))^{l}$ that does not depend on these parameters.
\end{theorem}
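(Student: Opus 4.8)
The plan is to deduce Theorem~\ref{th-Fredholm} by interpolation from two already available results: the classical Fredholm theory of the elliptic problem \eqref{ell-equ}--\eqref{bound-cond} in the $L_{p}$-Sobolev scale (Agmon--Douglis--Nirenberg, Triebel) and its analogue in the Hilbert refined scale $\{H^{s,\varphi}_{2}\}$ due to Mikhailets and Murach. First I would fix the interpolation data: choosing $p_{1}$, $\theta$, $s_{0}$, $s_{1}$, $\varphi_{0}$ exactly as in the proof of Theorem~\ref{th-trace} (splitting into the cases $1<p<2$ and $2<p<\infty$ so that $s_{0}>m+1/2$ and $s_{1}>m+1/p_{1}$, which is possible precisely because $s>m+1/p$ and \eqref{p-2-interp} holds), I have $H^{s,\varphi}_{p}(\Omega)=[H^{s_0,\varphi_0}_{2}(\Omega),H^{s_1}_{p_1}(\Omega)]_{\theta}$ by \eqref{C-interp-Sobolev-got}, and, combining Theorems~\ref{th-C-interp-Sobolev-got} and~\ref{th-C-interp-Besov-got} with the fact that complex interpolation commutes with finite direct products, the target space satisfies $\Xi^{s-2q,\varphi}_{p}(\Omega,\Gamma)=[\Xi^{s_0-2q,\varphi_0}_{2}(\Omega,\Gamma),\Xi^{s_1-2q}_{p_1}(\Omega,\Gamma)]_{\theta}$ up to equivalence of norms, where $\Xi^{s_1-2q}_{p_1}(\Omega,\Gamma)$ denotes the analogous product of classical Sobolev and Besov spaces.

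For the boundedness of \eqref{(A,B)}, I would recall that the mapping \eqref{mapping-(A,B)} extends by continuity to bounded operators $(A,B):H^{s_0,\varphi_0}_{2}(\Omega)\to\Xi^{s_0-2q,\varphi_0}_{2}(\Omega,\Gamma)$ (the Hilbert refined case, valid since $s_{0}>m+1/2$) and $(A,B):H^{s_1}_{p_1}(\Omega)\to\Xi^{s_1-2q}_{p_1}(\Omega,\Gamma)$ (the classical case, valid since $s_{1}>m+1/p_{1}$). These two operators restrict to the same map on $C^{\infty}(\overline{\Omega})$, so interpolation with the parameter $\theta$ produces the bounded operator \eqref{(A,B)}; it is the unique continuous extension of \eqref{mapping-(A,B)} because $C^{\infty}(\overline{\Omega})$ is dense in $H^{s,\varphi}_{p}(\Omega)$.

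To obtain the Fredholm property, I would use the fact that the elliptic problem has a parametrix that, \emph{as a mapping}, does not depend on the function-space parameters. Concretely, there exist bounded linear regularizers $R_{1},R_{2}$ and remainders $K_{1},K_{2}$ with $R_{1}(A,B)=I+K_{1}$ on the solution space, $(A,B)R_{2}=I+K_{2}$ on the target space, which act boundedly and satisfy these identities on \emph{every} space of both margin scales, and such that $K_{1},K_{2}$ raise smoothness by a fixed positive order. Interpolating the boundedness of $R_{1},R_{2},K_{1},K_{2}$ gives their boundedness in the $H^{s,\varphi}_{p}$-setting; since $\Omega$ is bounded, the smoothing property of $K_{1}$ together with the embeddings \eqref{ebbeddins-H} and the compactness of Sobolev embeddings over $\Omega$ shows that $K_{1}$ is a compact operator on $H^{s,\varphi}_{p}(\Omega)$, and likewise $K_{2}$ on $\Xi^{s-2q,\varphi}_{p}(\Omega,\Gamma)$. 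Hence \eqref{(A,B)} has bounded left and right regularizers modulo compact operators and is therefore Fredholm.

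It remains to address the kernel, the index, and the splitting \eqref{M+range}. If $u\in H^{s,\varphi}_{p}(\Omega)$ and $(A,B)u=0$, then the embeddings \eqref{ebbeddins-H} into classical Sobolev spaces combined with the interior and up-to-the-boundary regularity of solutions of the elliptic problem give $u\in C^{\infty}(\overline{\Omega})$; thus $\ker(A,B)$ coincides with the finite-dimensional kernel of the problem on $C^{\infty}(\overline{\Omega})$ and is independent of $s$, $\varphi$, $p$. The operators \eqref{(A,B)} for different parameters are mutually compatible (restrictions of one another on $C^{\infty}(\overline{\Omega})$) and share this kernel, while their cokernels are cut out by the solvability conditions of the formally adjoint elliptic problem, whose solutions are smooth; taking $M$ to be the span of a fixed finite family of these smooth data yields \eqref{M+range} with $M\subset C^{\infty}(\overline{\Omega})\times(C^{\infty}(\Gamma))^{q}$ independent of the parameters, and the index equals $\dim\ker(A,B)-\dim M$, again parameter-free. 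The main obstacle is not a single hard estimate but the bookkeeping that makes the interpolation argument for the Fredholm property legitimate: one must exhibit the parametrix $R_{j}$ and the compact remainders $K_{j}$ with a \emph{common} action on all spaces of both margin scales, and verify that the kernel, the index, and the complement $M$ can be fixed once and for all; the trace restriction $s>m+1/p$ has to be kept consistent through the choice of interpolation parameters, which is handled exactly as in Theorem~\ref{th-trace}.
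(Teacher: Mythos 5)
Your overall scaffolding -- interpolating between the Hilbert refined scale $H^{s_0,\varphi_0}_{2}$ and the classical scale $H^{s_1}_{p_1}$ via Theorems~\ref{th-C-interp-Sobolev-got} and~\ref{th-C-interp-Besov-got} -- is exactly the paper's, and your derivation of the boundedness and uniqueness of the extension of \eqref{mapping-(A,B)} is fine. But the engine you substitute for the Fredholm part is where the gap lies. The paper's tool is Proposition~\ref{prop-interpolation-Fredholm}: if the two margin operators are Fredholm with \emph{common kernel} $N$ and \emph{equal index} $\varkappa$, and one margin pair is densely embedded in the other, then the interpolated operator is Fredholm with the same kernel, the same index, and range $[Y_0,Y_1]_{\theta}\cap T(X_0)$. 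That explicit range formula is what makes \eqref{M+range} a two-line computation ($\Xi^{s-2q,\varphi}_{p}=\Xi^{s-2q,\varphi}_{p}\cap(M\dotplus(A,B)(H^{s_1}_{p_1}))=M\dotplus(A,B)(H^{s,\varphi}_{p})$). You replace this by a parametrix $R_1,R_2$ with compact remainders ``acting boundedly on every space of both margin scales,'' but you only postulate such a parametrix rather than construct it (and the paper explicitly admits $m\geq2q$, where the standard regularly-elliptic constructions need modification -- this is why it cites a separate result for that case at $p=2$). More seriously, even granting the parametrix, you only get that \eqref{(A,B)} is Fredholm; your identification of the range via the solvability conditions of the adjoint problem requires showing that every $F\in\Xi^{s-2q,\varphi}_{p}(\Omega,\Gamma)$ annihilated by the adjoint kernel is attained from $H^{s,\varphi}_{p}(\Omega)$ and not merely from the larger classical space. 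That is precisely the global regularity statement which the paper \emph{deduces from} \eqref{M+range} in the proof of Theorem~\ref{th-loc-regularity}; used here it is circular, and without it neither the index computation nor the splitting is established.

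A second, smaller gap is the case bookkeeping. You say the parameters are chosen ``exactly as in Theorem~\ref{th-trace},'' but the trace theorem only needs to interpolate \emph{bounded} operators, for which it suffices that the two margin operators agree on the intersection of their domains. Preserving the Fredholm data through interpolation (Proposition~\ref{prop-interpolation-Fredholm}) additionally requires a dense continuous embedding between the margin source spaces and between the margin target spaces. For $p>2$ and $m+1/p<s\leq m+1/2$ no choice of $(s_0,\varphi_0,2)$ and $(s_1,1,p_1)$ produces such embeddings together with $s_0>m+1/2$ and $s_1>m+1/p_1$; the paper handles this range by a third interpolation step within the fixed exponent $p$, between $H^{s_0}_{p}(\Omega)$ and $H^{s_1,\varphi_1}_{p}(\Omega)$ with $s_1>m+1/2$, using Theorems~\ref{th-C-interp-Sobolev} and~\ref{th-C-interp-Besov}. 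Your sketch does not cover this case.
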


If $\varphi(\cdot)\equiv1$, we will omit $\varphi$ in the designation $\Xi^{s-2q,\varphi}_{p} (\Omega,\Gamma)$ of the target space of operator~\eqref{(A,B)}.

Recall that a bounded linear operator $T:X\to Y$ acting between Banach spaces $X$ and $Y$ is called a Fredholm one if its kernel $\ker T$ and co-kernel $\operatorname{coker}T:=Y/T(X)$ are finite-dimensional. If $T$ is a Fredholm operator, then its range $T(X)$ is closed in $Y$  \cite[Lemma~19.1.1]{Hermander07v3} and its index $\mathrm{ind}\,T:=\dim\ker T-\dim\operatorname{coker}T$ is well defined and finite.

We prove Theorem~\ref{th-Fredholm} by means of the interpolation of Fredholm operators and rely on the following result:

\begin{proposition}\label{prop-interpolation-Fredholm}
Let $X=[X_{\,0},X_{1}]$ and  $Y=[Y_{\,0},Y_{1}]$ be ordered pairs of
Banach spaces such that the dense continuous embeddings $X_1\hookrightarrow X_0$ and $Y_1\hookrightarrow Y_0$ (or $X_0\hookrightarrow X_1 $ and $Y_0\hookrightarrow Y_1$) hold true. Suppose that a linear mapping $T$ is given on $X_{\,0}$  (resp. $X_{\,1}$) and defines bounded Fredholm operators $T: X_{0}\rightarrow Y_{0}$ and $T:X_{1}\rightarrow Y_{1}$ with common kernel
$N$ and the same index $\varkappa$. Then for any interpolation parameter $\theta\in(0,1)$ the
bounded operator $T: [X_{\,0},X_{1}]_{\theta} \rightarrow [Y_{\,0},Y_{1}]_{\theta}$ is a Fredholm operator with the kernel $N$, range $[Y_{\,0},Y_{1}]_{\theta}\cap T(X_{\,0})$  (resp. $[Y_{\,0},Y_{1}]_{\theta}\cap T(X_{\,1})$) and the same index $\varkappa$.
\end{proposition}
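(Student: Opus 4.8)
The plan is to realise both interpolation spaces as topological direct sums on which $T$ acts transparently, using two facts from the abstract theory of complex interpolation: if a bounded projector on an interpolation couple is bounded at both endpoints, then, up to equivalence of norms, its range on $[E_0,E_1]_\theta$ is the interpolation space of the ranges (complemented subspaces behave as retracts); and if $T$ is an isomorphism of two interpolation couples, then it induces an isomorphism of the interpolated spaces, by applying the exact interpolation functor to $T$ and to $T^{-1}$. The mere boundedness of $T:[X_0,X_1]_\theta\to[Y_0,Y_1]_\theta$ is already such an application.

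First I would produce a projector onto the common kernel that is simultaneously bounded at both endpoints. Since $N$ is finite-dimensional and $N\subset X_1\hookrightarrow X_0$, choose a basis $e_1,\dots,e_k$ of $N$ and functionals $\ell_1,\dots,\ell_k\in X_0'$ biorthogonal to it, and set $Px:=\sum_{i=1}^{k}\ell_i(x)\,e_i$; restricting the $\ell_i$ to $X_1$ (legitimate because $X_1\hookrightarrow X_0$) shows $P$ is a bounded projector onto $N$ on $X_0$, on $X_1$, and hence on $[X_0,X_1]_\theta$. Put $X_j^\circ:=(I-P)X_j$, a closed subspace of $X_j$. Then $T$ restricts to a continuous bijection $T:X_j^\circ\to T(X_j)$ of Banach spaces --- it is injective since $\ker T=N$ and $X_j^\circ\cap N=\{0\}$, it maps onto $T(X_j)=T(X_j^\circ)$, and the target is closed because $T:X_j\to Y_j$ is Fredholm --- hence an isomorphism for $j=0,1$ by the open mapping theorem, which in turn interpolates (together with its inverse) to an isomorphism $T:[X_0^\circ,X_1^\circ]_\theta\to[T(X_0),T(X_1)]_\theta$.

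The main obstacle is to exhibit a single finite-dimensional subspace $M$ that simultaneously complements $T(X_0)$ in $Y_0$ and $T(X_1)$ in $Y_1$; this is where density of the endpoint embedding is decisive. Take $M$ to be a complement of $T(X_1)$ in $Y_1$; then $\dim M=d:=\dim N-\varkappa$ (because $\varkappa=\dim N-\dim(Y_1/T(X_1))$) and $M\subset Y_1\subset Y_0$. Now $T(X_0)+M\supseteq T(X_1)+M=Y_1$ is dense in $Y_0$, whereas $T(X_0)+M$ is closed in $Y_0$ as the sum of a closed subspace and a finite-dimensional one; hence $T(X_0)+M=Y_0$. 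Since $T(X_0)$ has codimension $d$ in $Y_0$ as well (again $\varkappa=\dim N-\dim(Y_0/T(X_0))$), a dimension count forces $M\cap T(X_0)=\{0\}$, and in particular $T(X_0)\cap Y_1=T(X_1)$. As with $P$, the projector $Q$ onto $M$ along $T(X_0)$ may be written $Qy=\sum_i\phi_i(y)\,y_i$ with $y_i\in M$ and $\phi_i\in Y_0'$, so it is bounded on $Y_0$, on $Y_1$ and on $[Y_0,Y_1]_\theta$, with $(I-Q)Y_j=T(X_j)$ for $j=0,1$.

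It remains to assemble the conclusion. By the retract property, $[X_0,X_1]_\theta=N\dotplus[X_0^\circ,X_1^\circ]_\theta$ and $[Y_0,Y_1]_\theta=M\dotplus[T(X_0),T(X_1)]_\theta$; on the first summand $T$ vanishes, and on the second it acts as the isomorphism built in the second step. Therefore $T$ on $[X_0,X_1]_\theta$ has kernel exactly $N$ and range $[T(X_0),T(X_1)]_\theta$, and this range equals the closed subspace $\ker Q\cap[Y_0,Y_1]_\theta=[Y_0,Y_1]_\theta\cap T(X_0)$, of codimension $\dim M=d$; hence $T$ is Fredholm with kernel $N$, the asserted range, and index $\dim N-d=\varkappa$. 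The case of the reversed dense embeddings $X_0\hookrightarrow X_1$ and $Y_0\hookrightarrow Y_1$, with $T$ given on $X_1$, follows verbatim after interchanging the indices $0$ and $1$: one now chooses $M$ inside $Y_0$ and uses the density of $Y_0$ in $Y_1$, arriving at the range $[Y_0,Y_1]_\theta\cap T(X_1)$.
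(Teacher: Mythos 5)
Your argument is correct and complete: the biorthogonal projectors onto $N$ and $M$ are indeed bounded at both endpoints, the density of $Y_1$ in $Y_0$ is used exactly where it must be (to get $T(X_0)+M=Y_0$ and hence $T(X_0)\cap Y_1=T(X_1)$), and the retract principle for complemented subspaces legitimately yields the decompositions $[X_0,X_1]_\theta=N\dotplus[X_0^\circ,X_1^\circ]_\theta$ and $[Y_0,Y_1]_\theta=M\dotplus[T(X_0),T(X_1)]_\theta$ from which the kernel, range and index are read off. The paper offers no proof of its own here, deferring to Geymonat and to the Hilbert-space analogue in Mikhailets--Murach, whose proof rests on precisely this decomposition strategy, so your write-up is essentially a self-contained reconstruction of the standard argument rather than a different route.
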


This result can be considered as a consequence of \cite[Proposition~5.2]{Geymonat65} and as a version of \cite[Theorem~1.7]{MikhailetsMurach14} for the complex interpolation with an analogous proof.

\begin{proof}[Proof of Theorem~$\ref{th-Fredholm}$]
If $\varphi(\cdot)\equiv1$, then this theorem is a classical result
(see, e.g., \cite[Theorem~5.5.2]{Triebel95} in the case where the collection $(A,B_{1},\ldots,B_{q})$ is regularly elliptic \cite[Definition~5.2.1/4]{Triebel95} and $s\geq2q$; this result is also contained in \cite[Theorem~5.2]{Johnsen96}). If $p=2$ and $\varphi\in\Upsilon$ (the case of Hilbert spaces), Theorem~\ref{th-Fredholm} is contained in \cite[Theorem~4.1]{MikhailetsMurach06UMJ3} (if $m\leq2q-1$) and \cite[Theorem~1]{KasirenkoMurach17UMJ11} (if $m\geq2q$). Let $N$ and $\varkappa$ respectively denote the kernel and index of the Fredholm operator \eqref{(A,B)} provided that $1<p<\infty$ and $\varphi(\cdot)\equiv1$ or that $p=2$ and $\varphi\in\Upsilon$, with $N$ and $\varkappa$ not depending on $\varphi$ and $p$ indicated. Let $M$ satisfy the conclusion of Theorem~$\ref{th-Fredholm}$ for these $\varphi$ and $p$. We need to treat the cases where $1<p<2$ and where $p>2$ separately.

We first consider the $1<p<2$ case. Choose numbers $p_1$ and $s_1$ so that $1<p_1<p$ and $m+1/p<m+1/p_1<s_1<s$. Define parameters $\theta\in(0,1)$,  $s_0$, and $\varphi_{0}\in\Upsilon$ according to the hypotheses of Theorem~\ref{th-C-interp-Sobolev-got}. Note that $s_0>s_1>m+1/p_1>m+1/2$ and $2>p>p_1$. Hence, we have the dense continuous  embeddings
\begin{equation}\label{(A,B)-embeddings}
H^{s_0,\varphi_0}_2(\Omega)\hookrightarrow  H^{s_1}_{p_1}(\Omega)
\quad\mbox{and}\quad
\Xi^{s_0-2q,\varphi_0}_2 (\Omega,\Gamma)\hookrightarrow
\Xi^{s_1-2q}_{p_1}(\Omega,\Gamma).
\end{equation}
Moreover, we have the bounded Fredholm operators
\begin{equation}\label{Operator1}
(A,B):H^{s_0,\varphi_0}_{2}(\Omega)\to
\Xi^{s_0-2q,\varphi_0}_{2}(\Omega,\Gamma)
\end{equation}
and
\begin{equation}\label{Operator2}
(A,B):H^{s_1}_{p_1}(\Omega)\to \Xi^{s_1-2q,\varphi_1}_{p_1}(\Omega,\Gamma)
\end{equation}
with the kernel $N$ and index $\varkappa$. Interpolating them, we conclude by Theorems \ref{th-C-interp-Sobolev-got} and \ref{th-C-interp-Besov-got} that the restriction of the latter operator to the space
\begin{equation}\label{source-space-interpolation}
[H^{s_0,\varphi_0}_{2}(\Omega),H^{s_1}_{p_1}(\Omega)]_{\theta}=
H^{s,\varphi}_{p}(\Omega)
\end{equation}
is a bounded operator between the spaces $H^{s,\varphi}_{p}(\Omega)$ and
\begin{align}
&[\Xi^{s_0-2q,\varphi_0}_{2}(\Omega,\Gamma),
\Xi^{s_1-2q}_{p_1}(\Omega,\Gamma)]_\theta \notag\\
&=[H^{s_0-2q,\varphi_0}_{2}(\Omega),
H^{s_1-2q}_{p_1}(\Omega)]_{\theta}\times
\prod_{j=1}^{q}[H^{s_0-m_{j}-1/2,\varphi_0}_{2}(\Gamma),
B^{s_1-m_{j}-1/p_1}_{p_1}(\Gamma)]_\theta  \notag\\
&=H^{s-2q,\varphi}_{p}(\Omega)\times
\prod_{j=1}^{q}B^{s-m_{j}-1/p,\varphi}_{p}(\Gamma)=
\Xi^{s-2q,\varphi}_{p}(\Omega,\Gamma). \label{target-space-interplation}
\end{align}
Owing to Proposition~\ref{prop-interpolation-Fredholm} and embeddings \eqref{(A,B)-embeddings}, this operator is Fredholm with the kernel $N$, index $\varkappa$, and range
\begin{equation*}
\Xi^{s-2q,\varphi}_{p}(\Omega,\Gamma)\cap(A,B)(H^{s_1}_{p_1}(\Omega)).
\end{equation*}
Moreover,
\begin{align*}
\Xi^{s-2q,\varphi}_{p}(\Omega,\Gamma)&=
\Xi^{s-2q,\varphi}_{p}(\Omega,\Gamma)\cap
\Xi^{s_1-2q}_{p_1}(\Omega,\Gamma)=
\Xi^{s-2q,\varphi}_{p}(\Omega,\Gamma)\cap
\bigl(M\dotplus(A,B)(H^{s_1}_{p_1}(\Omega))\bigr)\\
&=M\dotplus\bigl(\Xi^{s-2q,\varphi}_{p}(\Omega,\Gamma)\cap
(A,B)(H^{s_1}_{p_1}(\Omega))\bigr)=
M\dotplus(A,B)(H^{s,\varphi}_{p}(\Omega))
\end{align*}
because $M\subset\Xi^{s-2q,\varphi}_{p}(\Omega,\Gamma)$; i.e. \eqref{M+range} holds true. Since $C^{\infty}(\overline{\Omega})$ is dense in $H^{s,\varphi}_{p}(\Omega)$, this operator is the extension by continuity of mapping \eqref{mapping-(A,B)}. We have proved Theorem~$\ref{th-Fredholm}$ in the case where $1<p<2$.

Let us now consider the $p>2$ case. We separately treat the cases where $s>m+1/2$ and where $m+1/p<s\leq m+1/2$. Assume first that $s>m+1/2$. Choose numbers $p_1$ and $s_0$ so that $p_1>p$ and $m+1/2<s_0<s$. Define parameters $\theta\in(0,1)$,  $s_1$, and $\varphi_{0}\in\Upsilon$ according to the hypotheses of Theorem~\ref{th-C-interp-Sobolev-got}; then $s_1>s_0>m+1/2>m+1/p_1$. We have the dense continuous  embeddings
\begin{equation}\label{(A,B)-embeddings-reverse}
H^{s_1}_{p_1}(\Omega)\hookrightarrow H^{s_0,\varphi_0}_2(\Omega)
\quad\mbox{and}\quad
\Xi^{s_1-2q}_{p_1}(\Omega,\Gamma) \hookrightarrow\Xi^{s_0-2q,\varphi_0}_{2}(\Omega,\Gamma),
\end{equation}
and, moreover, the Fredholm bounded operators \eqref{Operator1} and \eqref{Operator2} with the kernel $N$ and index~$\varkappa$. The interpolation formulas \eqref{source-space-interpolation} and \eqref{target-space-interplation} hold true due to Theorems \ref{th-C-interp-Sobolev-got} and \ref{th-C-interp-Besov-got}. We conclude by Proposition~\ref{prop-interpolation-Fredholm} and embeddings \eqref{(A,B)-embeddings-reverse} that the restriction of \eqref{Operator1} to space \eqref{target-space-interplation}
is a bounded Fredholm operator on the pair of spaces \eqref{source-space-interpolation} and \eqref{target-space-interplation} and that this operator has the kernel $N$, index $\varkappa$, and range
\begin{equation*}
\Xi^{s-2q,\varphi}_{p}(\Omega,\Gamma)\cap
(A,B)(H^{s_0,\varphi_0}_{2}(\Omega)).
\end{equation*}
Hence,
\begin{align*}
\Xi^{s-2q,\varphi}_{p}(\Omega,\Gamma)&=
\Xi^{s-2q,\varphi}_{p}(\Omega,\Gamma)\cap
\bigl(M\dotplus(A,B)(H^{s_0,\varphi_0}_{2}(\Omega))\bigr)\\
&=M\dotplus\bigl(\Xi^{s-2q,\varphi}_{p}(\Omega,\Gamma)\cap
(A,B)(H^{s_0,\varphi_0}_{2}(\Omega))\bigr)=
M\dotplus(A,B)(H^{s,\varphi}_{p}(\Omega)),
\end{align*}
i.e. \eqref{M+range} holds true. We have proved Theorem~$\ref{th-Fredholm}$ in the case where $p>2$ and $s>m+1/2$.

Consider the case where $p>2$ and $m+1/p<s\leq m+1/2$. Choose numbers $s_0$ and $s_1$ so that
\begin{equation*}
m+1/p<s_0<s\leq m+1/2<s_1;
\end{equation*}
define a number $\theta\in(0,1)$ according to the formula  $s=(1-\theta)s_0 +\theta s_1$, and put $\varphi_1(t):=\varphi^{1/\theta}(t)$ for every $t\geq1$, with $\varphi_1\in\Upsilon$. We have the  dense continuous embeddings
\begin{equation}\label{embeddings-fixed-p}
H^{s_1,\varphi_1}_{p}(\Omega)\hookrightarrow H^{s_0}_{p}(\Omega)
\quad\mbox{and}\quad
\Xi^{s_1-2q,\varphi_1}_{p}(\Omega,\Gamma)\hookrightarrow
\Xi^{s_0-2q}_{p}(\Omega,\Gamma).
\end{equation}
We also have the bounded Fredholm operators
\begin{equation*}
(A,B):H^{s_0}_{p}(\Omega)\to\Xi^{s_0-2q}_{p}(\Omega,\Gamma)
\end{equation*}
and
\begin{equation*}
(A,B):H^{s_1,\varphi_1}_{p}(\Omega)\to
\Xi^{s_1-2q,\varphi_1}_{p}(\Omega,\Gamma)
\end{equation*}
with the kernel $N$ and index $\varkappa$. The Fredholm property of the second operator was proved in the previous paragraph because $s_1>m+1/2$. Interpolating these operators, we conclude by Theorems  \ref{th-C-interp-Sobolev} and \ref{th-C-interp-Besov} that the restriction of the first operator to the space
\begin{equation*}
[H^{s_0}_{p}(\Omega),H^{s_1,\varphi_1}_{p}(\Omega)]_{\theta}=
H^{s,\varphi}_{p}(\Omega)
\end{equation*}
acts continuously between $H^{s,\varphi}_{p}(\Omega)$ and
\begin{equation*}
[\Xi^{s_0-2q}_{p}(\Omega,\Gamma),
\Xi^{s_1-2q,\varphi_1}_{p}(\Omega,\Gamma)]_{\theta}=
\Xi^{s-2q,\varphi}_{p}(\Omega,\Gamma).
\end{equation*}
Owing to Proposition~\ref{prop-interpolation-Fredholm} and embeddings \eqref{embeddings-fixed-p}, this operator is Fredholm with the kernel $N$, index $\varkappa$, and range
\begin{equation*}
(A,B)(H^{s,\varphi}_{p}(\Omega))=
\Xi^{s-2q,\varphi}_{p}(\Omega,\Gamma)\cap(A,B)(H^{s_0}_{p}(\Omega)).
\end{equation*}
This equality yields \eqref{M+range} as above. We have proved Theorem~$\ref{th-Fredholm}$ in the last case.
\end{proof}

Let $\Lambda_{m}(\Omega)$ denote the union of all spaces $H^{s}_{p}(\Omega)$ such that $p\in(1,\infty)$ and $s>m+1/p$. (The set $\Lambda_{m}(\Omega)$ is not closed with respect to the sum of its elements.) A~distribution $u\in\Lambda_{m}(\Omega)$ is said to be a generalized solution to the problem \eqref{ell-equ}, \eqref{bound-cond} with right-hand sides $f\in\mathcal{D}'(\Omega)$ and $g_{1},\ldots,g_{q}\in\mathcal{D}'(\Gamma)$ if $(A,B)u=(f,g_{1},\ldots,g_{q})$ for a certain operator~\eqref{(A,B)}. As usual, $\mathcal{D}'(\Omega)$ and $\mathcal{D}'(\Gamma)$ are linear topological spaces of all distributions on $\Omega$ and $\Gamma$, resp.
Note that the image $(A,B)u$ does not depend on the above parameters $p$ and $s$ for which $u\in H^{s}_{p}(\Omega)$. This follows from the fact that a sequence $(u_{k})\subset C^{\infty}(\overline{\Omega})$ approximating $u$ in $H^{s}_{p}(\Omega)$ can be build in a way that does not depend on these parameters (see, e.g., \cite[Proof of Theorem~2.3.3, Step~5]{Triebel83}).

Let us study local properties (up to the boundary $\Gamma$) of the generalized solutions to the elliptic problem. To this end, we introduce local versions of distribution spaces appearing in~\eqref{(A,B)}. Let $U$ be an open set in $\mathbb{R}^{n}$ such that $\Omega_{0}:=\Omega\cap U\neq\emptyset$, and put $\Gamma_{0}:=\Gamma\cap U$ (the $\Gamma_{0}=\emptyset$ case is admissible). Given $\sigma\in\mathbb{R}$,  $\varphi\in\Upsilon$, and $p\in(1,\infty)$, we let $H^{\sigma,\varphi}_{p,\mathrm{loc}}(\Omega_{0},\Gamma_{0})$ denote the linear space of all distributions $u\in\mathcal{D}'(\Omega)$ such that $\chi u\in H^{\sigma,\varphi}_{p}(\Omega)$ whenever $\chi\in C^{\infty}(\overline{\Omega})$ and $\mathrm{supp}\,\chi\subset\Omega_{0}\cup\Gamma_{0}$. Analogously, $B^{\sigma,\varphi}_{p,\mathrm{loc}}(\Gamma_{0})$ denotes the linear space of all distributions $v\in\mathcal{D}'(\Gamma)$ such that $\chi v\in B^{\sigma,\varphi}_{p}(\Gamma)$ whenever $\chi\in C^{\infty}(\Gamma)$ and $\mathrm{supp}\,\chi\subset\Gamma_{0}$. The operator of the multiplication by any function $\chi\in C^{\infty}(\overline{\Omega})$ (resp., $\chi\in C^{\infty}(\Gamma)$) is bounded on  $H^{\sigma,\varphi}_{p}(\Omega)$ (resp., $B^{\sigma,\varphi}_{p}(\Gamma)$), which is known in the $\varphi(\cdot)\equiv1$ case and then extends by interpolation (Theorems \ref{th-C-interp-Sobolev-got}, \ref{th-C-interp-Besov-got} and Propositions \ref{prop-quadratic-interp}, \ref{prop-quadratic-interp-Gamma}) to the $\varphi\in\Upsilon$ case. Hence, $H^{\sigma,\varphi}_{p}(\Omega)\subset
H^{\sigma,\varphi}_{p,\mathrm{loc}}(\Omega_{0},\Gamma_{0})$ and
$B^{\sigma,\varphi}_{p}(\Gamma)\subset B^{\sigma,\varphi}_{p,\mathrm{loc}}(\Gamma_{0})$
If $\Omega_{0}=\Omega$ and $\Gamma_{0}=\Gamma$, then these inclusions become equalities.

\begin{theorem}\label{th-loc-regularity}
Suppose that a distribution $u\in\Lambda_{m}(\Omega)$ is a generalized solution to the elliptic problem \eqref{ell-equ}, \eqref{bound-cond} whose right-hand sides satisfy the following conditions:
\begin{equation}\label{cond-loc-regularity}
f\in H^{s-2q,\varphi}_{p,\mathrm{loc}}(\Omega_{0},\Gamma_{0})
\;\;\mbox{and}\;\;
g_{j}\in B^{s-m_{j}-1/p,\varphi}_{p,\mathrm{loc}}(\Gamma_{0})\;\;
\mbox{whenever}\;\;j\in\{1,\ldots,q\}
\end{equation}
for certain indexes $p\in(1,\infty)$, $s>m+1/p$, and $\varphi\in\Upsilon$. Then $u\in H^{s,\varphi}_{p,\mathrm{loc}}(\Omega_{0},\Gamma_{0})$.
\end{theorem}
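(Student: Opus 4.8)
\emph{Overview.} The plan is to show that $\chi u\in H^{s,\varphi}_{p}(\Omega)$ for every $\chi\in C^{\infty}(\overline{\Omega})$ with $\operatorname{supp}\chi\subset\Omega_{0}\cup\Gamma_{0}$, which is exactly the assertion $u\in H^{s,\varphi}_{p,\mathrm{loc}}(\Omega_{0},\Gamma_{0})$. I would combine a global regularity lemma drawn from Theorem~\ref{th-Fredholm} with a finite commutator/cutoff induction. The global lemma is: if $w\in H^{\lambda}_{p}(\Omega)$ for some $\lambda>m+1/p$ and $(A,B)w\in\Xi^{s-2q,\varphi}_{p}(\Omega,\Gamma)$ with $s>\lambda$, then $w\in H^{s,\varphi}_{p}(\Omega)$. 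This will follow from the decomposition~\eqref{M+range} and the fact that the complement $M$, the common kernel, and the index of~\eqref{(A,B)} do not depend on the parameters: writing $(A,B)w=h+(A,B)v$ with $h\in M$ and $v\in H^{s,\varphi}_{p}(\Omega)\hookrightarrow H^{\lambda}_{p}(\Omega)$, the element $h=(A,B)(w-v)$ lies in $M\cap(A,B)(H^{\lambda}_{p}(\Omega))=\{0\}$ by the directness of~\eqref{M+range} for the parameters $\lambda,p$; hence $w-v$ belongs to the common kernel, which is contained in $C^{\infty}(\overline{\Omega})$, and so $w\in H^{s,\varphi}_{p}(\Omega)$.

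\emph{Commutator step.} For admissible cutoffs $\chi\prec\eta$ (that is, $\eta\equiv1$ in a neighbourhood of $\operatorname{supp}\chi$, so $\chi u=\chi(\eta u)$) I would use the identity
\[
(A,B)(\chi u)=\bigl(\chi f+A'(\eta u),\,\chi g_{1}+B_{1}'(\eta u),\ldots,\chi g_{q}+B_{q}'(\eta u)\bigr),
\]
where $A':=[A,\chi]$ is a differential operator on $\overline{\Omega}$ of order $\le2q-1$ and each $B_{j}':=[B_{j},\chi]$ is a boundary differential operator of order $\le m_{j}-1$, both with $C^{\infty}$ coefficients; this identity is meaningful once $\chi u$ is known to lie in some $H^{\lambda}_{p}(\Omega)$ with $\lambda>m+1/p$. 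By hypothesis~\eqref{cond-loc-regularity}, $\chi f\in H^{s-2q,\varphi}_{p}(\Omega)$ and $\chi g_{j}\in B^{s-m_{j}-1/p,\varphi}_{p}(\Gamma)$. If moreover $\eta u\in H^{\lambda}_{p}(\Omega)$ with $\lambda>m+1/p$, then the boundedness of differential operators together with the trace Theorem~\ref{th-trace} — both of which transfer to the refined scales by interpolation (Theorems~\ref{th-C-interp-Sobolev} and~\ref{th-C-interp-Sobolev-got}, Theorems~\ref{th-C-interp-Besov} and~\ref{th-C-interp-Besov-got}, Propositions~\ref{prop-quadratic-interp} and~\ref{prop-quadratic-interp-Gamma}) — give $A'(\eta u)\in H^{\lambda-2q+1}_{p}(\Omega)$ and $B_{j}'(\eta u)\in B^{\lambda-m_{j}+1-1/p}_{p}(\Gamma)$. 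Hence $(A,B)(\chi u)\in\Xi^{\min\{\lambda+1,s\}-2q,\,\varphi'}_{p}(\Omega,\Gamma)$, where $\varphi'=\varphi$ if $\lambda+1>s$ and $\varphi'(\cdot)\equiv1$ if $\lambda+1<s$.

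\emph{Induction.} Since $\Omega$ is bounded, Sobolev embeddings together with a preliminary bootstrap of $u$ — improving its local regularity in a scale of integrability exponent at most $p$ and then re-embedding into the $p$-scale — will produce an admissible cutoff $\chi_{0}$ with $\chi_{0}u\in H^{\lambda_{0}}_{p}(\Omega)$ for some $\lambda_{0}>m+1/p$. Given the target $\chi$, I would pick a finite chain $\chi=\chi_{N}\prec\chi_{N-1}\prec\cdots\prec\chi_{0}$ of admissible cutoffs and smoothness gains not exceeding $1$ per step so that the intermediate regularities remain strictly below $s$ and $\chi_{N-1}u$ ends up in $H^{\mu}_{p}(\Omega)$ for some $\mu$ with $\max\{s-1,m+1/p\}<\mu<s$. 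Applying the commutator step and then the global lemma successively to $\chi_{1},\ldots,\chi_{N-1}$ yields $\chi_{k}u\in H^{\lambda_{k}}_{p}(\Omega)$ with $\lambda_{k}$ increasing to $\mu$; in the final step, applied to $\chi_{N}=\chi$, one has $\mu+1>s$, so the commutator terms $A'(\chi_{N-1}u)$ and $B_{j}'(\chi_{N-1}u)$ embed (by~\eqref{ebbeddins-H} and its analogue~\eqref{ebbeddins-B} for $\Gamma$) into $H^{s-2q,\varphi}_{p}(\Omega)$ and $B^{s-m_{j}-1/p,\varphi}_{p}(\Gamma)$ respectively, whence $(A,B)(\chi u)\in\Xi^{s-2q,\varphi}_{p}(\Omega,\Gamma)$ and the global lemma gives $\chi u\in H^{s,\varphi}_{p}(\Omega)$.

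\emph{Main obstacle.} The delicate part is the commutator step near $\Gamma$: one must know that lower-order differential operators on $\overline{\Omega}$, boundary differential operators, and the trace operator act between the generalized-smoothness Sobolev and Besov spaces with precisely the expected shift of smoothness index, which requires carrying the corresponding classical facts through the two interpolation procedures that define these spaces (as in the proofs of Theorems~\ref{th-localization} and~\ref{th-trace}). A further technical point needing care is the reduction from the hypothesis $u\in\Lambda_{m}(\Omega)$, where the integrability exponent may differ from $p$, to the starting regularity $\chi_{0}u\in H^{\lambda_{0}}_{p}(\Omega)$ with $\lambda_{0}>m+1/p$; this is to be handled by combining Sobolev embeddings on the bounded domain $\Omega$ with the same bootstrap mechanism performed in scales of intermediate integrability.
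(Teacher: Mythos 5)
Your overall architecture --- a global lemma extracted from the Fredholm decomposition \eqref{M+range}, the commutator identity $(A,B)(\chi u)=\chi F+(A',B')(\eta u)$, and a finite cutoff bootstrap --- is the same as the paper's, and the global lemma itself and the steps you run inside the fixed scale $H^{\cdot}_{p}$ are correct. The genuine gap is the step you defer to the end: producing $\chi_{0}u\in H^{\lambda_{0}}_{p}(\Omega)$ with $\lambda_{0}>m+1/p$ from the hypothesis $u\in H^{s_{0}}_{p_{0}}(\Omega)$, $s_{0}>m+1/p_{0}$, by ``Sobolev embeddings plus bootstrap in scales of intermediate integrability.'' This mechanism fails in general: embedding $u$ into an exponent $p'<p_{0}$ keeps the smoothness $s_{0}$ but raises the admissibility threshold to $m+1/p'$, which $s_{0}$ need not exceed, while embedding the data from the $p$-scale into an exponent $p'>p$ costs $n(1/p-1/p')$ derivatives and can drop it below the threshold $m+1/p'$, so that no regularity theorem applies at $p'$. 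For instance, with $n=2$, $q=1$, $m=0$, $p_{0}=100$, $s_{0}=0.02$, $p=1.1$, $s=1$, the solution exceeds the threshold only at exponents $p'>50$, where the datum $f\in H^{-1,\varphi}_{1.1}$ embeds only into spaces of smoothness about $-2.8$, far below $-2q+m+1/p'$, so no bootstrap can start there; at every exponent $p'\leq 50$ (including $p'=p$) the solution sits below the threshold $1/p'$. There is no chain of embeddings and threshold-respecting regularity steps reaching $H^{\lambda_{0}}_{1.1}(\Omega)$ with $\lambda_{0}>10/11$, yet the theorem's conclusion does hold.

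The paper resolves exactly this point with a device your proposal lacks: the induction \eqref{2f34} runs in the algebraic sum $H^{s,\varphi}_{p}(\Omega)+H^{s_{0}+i}_{p_{0}}(\Omega)$, so the two integrability exponents never have to be reconciled prematurely. Writing $\eta u=u_{1}+u_{2}$ with $u_{1}\in H^{s,\varphi}_{p}(\Omega)$ and $u_{2}\in H^{s_{0}+i-1}_{p_{0}}(\Omega)$, the commutator term $(A',B')u_{2}$ lands in $\Xi^{s_{0}+i-2q}_{p_{0}}(\Omega,\Gamma)$, which is above the threshold in the $p_{0}$-scale because the commutator lowers all orders by one --- a gain independent of the data --- so the decomposition \eqref{M+range} at level $s_{0}+i$ in the $p_{0}$-scale converts it into $F_{0}+(A,B)v_{2}$ with $v_{2}\in H^{s_{0}+i}_{p_{0}}(\Omega)$, one extra $p_{0}$-derivative per step; the remaining piece $(A,B)(\chi u-v_{2})\in\Xi^{s-2q,\varphi}_{p}(\Omega,\Gamma)$ is then handled by the global case in the $p$-scale (which itself invokes the classical theorem of Johnsen that already permits the change of integrability exponent). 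Only after enough iterations, when the surplus of $p_{0}$-smoothness is large, does a Sobolev embedding move $H^{s_{0}+i}_{p_{0}}(\Omega)$ into $H^{s,\varphi}_{p}(\Omega)$. If you do not wish to reproduce this sum-of-spaces argument, the honest alternative is to cite the classical \emph{local} regularity theorem with change of integrability exponent as a black box for your starting point; as written, your reduction is not a proof.
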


It follows directly from the boundedness of operator \eqref{(A,B)} that conditions \eqref{cond-loc-regularity} are also necessary for the inclusion $u\in H^{s,\varphi}_{p,\mathrm{loc}}(\Omega_{0},\Gamma_{0})$. If $\Omega_{0}=\Omega$ and $\Gamma_{0}=\Gamma$, then Theorem~\ref{th-loc-regularity} gives sufficient conditions for certain global regularity of the solution $u$, i.e. for the regularity in the whole domain $\Omega$ up to its boundary $\Gamma$. If
$\Gamma_{0}=\emptyset$, this theorem concerns the regularity inside $\Omega$ and means that
\begin{equation*}
f\in H^{s-2q,\varphi}_{p,\mathrm{loc}}(\Omega_{0},\emptyset)
\;\Longrightarrow\;
u\in H^{s,\varphi}_{p,\mathrm{loc}}(\Omega_{0},\emptyset).
\end{equation*}

\begin{proof}[Proof of Theorem $\ref{th-loc-regularity}$]  We first  consider the case where $\Omega_{0}=\Omega$ and $\Gamma_{0}=\Gamma$. In this case, the theorem is known if $\varphi(\cdot)\equiv1$ (see, e.g., \cite[Theorem~5.2 and Corollary~1.2]{Johnsen96}). Dealing with arbitrary  $\varphi\in\Upsilon$, we choose a number $\varepsilon>0$ so that $s-\varepsilon>m+1/p$. Since $(f,g)\in\Xi^{s-\varepsilon-2q}_{p}(\Omega,\Gamma)$ by \eqref{cond-loc-regularity}, we have $u\in H^{s-\varepsilon}_{p}(\Omega)$. Then
\begin{align*}
(f,g)&=(A,B)u\in\Xi^{s-2q,\varphi}_{p}(\Omega,\Gamma)\cap
(A,B)(H^{s-\varepsilon}_{p}(\Omega))\\
&=\bigl(M\dotplus(A,B)(H^{s,\varphi}_{p}(\Omega))\bigr)\cap
(A,B)(H^{s-\varepsilon}_{p}(\Omega))=(A,B)(H^{s,\varphi}_{p}(\Omega))
\end{align*}
due to \eqref{M+range}; i.e., $(f,g)=(A,B)v$ for certain $v\in H^{s,\varphi}_{p}(\Omega)$. Hence, the distribution $u-v\in H^{s-\varepsilon}_{p}(\Omega)$ satisfies $(A,B)(u-v)=0$, which yields $w:=u-v\in C^{\infty}(\overline{\Omega})$ by Theorem~\ref{th-Fredholm}. Thus, $u=v+w\in H^{s,\varphi}_{p}(\Omega)$. We have proved Theorem~\ref{th-loc-regularity} in the case where $\Omega_{0}=\Omega$ and $\Gamma_{0}=\Gamma$, which will help to treat the general case.

Let us consider the general case. Put
\begin{equation*}
\Psi:=\bigl\{\chi\in C^{\infty}(\overline{\Omega}):
\mathrm{supp}\,\chi\subset\Omega_0\cup\Gamma_0\bigr\}.
\end{equation*}
Since $u\in\Lambda_{m}(\Omega)$, we have $u\in H^{s_0}_{p_0}(\Omega)$ for some $p_0>1$ and $s_0>m+1/p_0$. Let us prove the following implication:
\begin{equation}\label{2f34}
\begin{aligned}
&\bigl(\chi u\in H^{s,\varphi}_{p}(\Omega)+
H^{s_{0}+i-1}_{p_0}(\Omega)\;\;\mbox{for every}\;\;\chi\in\Psi\bigr)\\
&\Longrightarrow\;\bigl(\chi u\in H^{s,\varphi}_{p}(\Omega)+
H^{s_{0}+i}_{p_0}(\Omega)\;\;\mbox{for every}\;\;\chi\in\Psi\bigr).
\end{aligned}
\end{equation}
Here, $i$ is an arbitrary natural number, and we use algebraic sums of spaces.

Suppose that premise of the application is true.  We consider an arbitrary function $\chi\in\Psi$ and choose a function $\eta\in\Psi$ so that $\eta=1$ in the neighbourhood of $\mathrm{supp}\,\chi$. By hypothesis \eqref{cond-loc-regularity}, we have the inclusion $\chi F\in\Xi^{s-2q,\varphi}_{p}(\Omega,\Gamma)$ where $F:=(A,B)u$. Permuting the differential operator $(A,B)$ and the operator of the multiplication by $\chi$, we obtain
\begin{equation*}
\chi F=\chi(A,B)(\eta u)=(A,B)(\chi\eta u)-(A',B')(\eta u).
\end{equation*}
Here, $(A',B')$ is a differential operator of the form $(A,B)$ with infinitely smooth coefficients such that the orders of all its components are smaller (at least by $1$) than the orders of the corresponding components of the operator $(A,B)$. (Of course, if $\mathrm{ord}\,B_{j}=0$ [i.e., $B_{j}$ is an operator of the multiplication by a function of class $C^{\infty}(\Gamma)$], then the corresponding operator $B_{j}'=0$ and we may assume that $\mathrm{ord}\,B_{j}'=-1$.)

Thus,
\begin{equation}\label{(A,B)-chi-permuted}
(A,B)(\chi u)=\chi F+(A',B')(\eta u).
\end{equation}
By the premise of implication \eqref{2f34}, we have $\eta u=u_{1}+u_{2}$ for certain distributions $u_{1}\in H^{s,\varphi}_{p}(\Omega)$ and $u_{2}\in H^{s_{0}+i-1}_{p_0}(\Omega)$. Hence,
$(A,B)(\chi u)=F_{1}+F_{2}$, where
\begin{gather}\label{2f36}
F_{1}:=\chi F+(A',B')u_{1}\in\Xi^{s-2q,\varphi}_{p}(\Omega,\Gamma),\\
F_{2}:=(A',B')u_{2}\in\Xi^{s_0+i-2q}_{p_0}(\Omega,\Gamma). \label{2f37}
\end{gather}
As to these inclusions, note that the mapping $v\mapsto(A',B')v$ sets  a bounded operator
\begin{equation}\label{(A,B)prime}
(A',B'):H^{s,\varphi}_{p}(\Omega)\to H^{s-(2q-1),\varphi}_{p}(\Omega)\times
\prod_{j=1}^{q}B^{s-(m_{j}-1)-1/p,\varphi}_{p}(\Gamma)=
\Xi^{s+1-2q,\varphi}_{p}(\Omega,\Gamma)
\end{equation}
whatever $p>1$, $s>m-1+1/p$, and $\varphi\in\Upsilon$. This is deduced from the $\varphi(\cdot)\equiv1$ case by interpolation in an analogous way to that used for \eqref{trace-operator} in the proof of Theorem~\ref{th-trace}, with the condition $s>m-1+1/p$ being stipulated by the fact that the orders of all components of $B'$ are less than or equal to $m-1$.

Since
\begin{equation*}
\Xi^{s_0+i-2q}_{p_0}(\Omega,\Gamma)= M\dotplus(A,B)(H^{s_0+i}_{p_0}(\Omega))
\end{equation*}
for a certain finite-dimensional space $M\subset C^{\infty}(\overline{\Omega})\times(C^{\infty}(\Gamma))^{l}$ (see \eqref{th-Fredholm}) and because of \eqref{2f37}, we have $F_{2}=F_{0}+(A,B)v_{2}$ for some $F_{0}\in M$ and $v_{2}\in H^{s_0+i}_{p_0}(\Omega)$. Hence,
\begin{equation*}
(A,B)(\chi u)=F_{1}+F_{0}+(A,B)v_{2},
\end{equation*}
which gives
\begin{equation}\label{proof-regularity-inclusion}
(A,B)(\chi u-v_{2})=F_{1}+F_{0}\in\Xi^{s-2q,\varphi}_{p}(\Omega,\Gamma)
\end{equation}
due to \eqref{2f36}. The right-hand side of this equality is well-defined because $\chi u-v_{2}\in H^{s_0}_{p_0}(\Omega)$. It is follows from \eqref{proof-regularity-inclusion} that
\begin{equation*}
v_{1}:=\chi u-v_{2}\in H^{s,\varphi}_{p}(\Omega),
\end{equation*}
as was shown in the first paragraph of this proof. Thus,
\begin{equation*}
\chi u=v_{1}+v_{2}\in H^{s,\varphi}_{p}(\Omega)+H^{s_0+i}_{p_0}(\Omega).
\end{equation*}
We have proved the required implication \eqref{2f34}.

Since $u\in H^{s_0}_{p_0}(\Omega)$, the premise of this implication holds true in the $i=1$ case. Hence,
$\chi u\in H^{s,\varphi}_{p}(\Omega)+
H^{s_{0}+i}_{p_0}(\Omega)$ for every $\chi\in\Psi$ and each integer $i\geq1$. As is known \cite[Theorem 4.6.2(a)]{Triebel95},
\begin{equation*}
H^{s_{0}+i}_{p_0}(\Omega)\subset H^{s-1/2}_{p}(\Omega)
\quad\mbox{whenever}\quad
s_{0}+i-\frac{n}{p_0}\geq s-\frac{1}{2}-\frac{n}{p_0}.
\end{equation*}
Hence, $H^{s_{0}+i}_{p_0}(\Omega)\subset H^{s,\varphi}_{p}(\Omega)$ if $i\gg1$. Thus, $\chi u\in H^{s,\varphi}_{p}(\Omega)$ for every $\chi\in\Psi$, q.e.d.
\end{proof}

We supplement the last theorem with the corresponding \textit{a~priori} estimate of the solution~$u$.

\begin{theorem}\label{th-loc-estimate}
Suppose that the hypotheses of Theorem~$\ref{th-loc-regularity}$ are satisfied. We arbitrarily choose functions $\chi,\eta\in C^{\infty}(\overline{\Omega})$ such that their supports lie in $\Omega_{0}\cup\Gamma_{0}$ and that $\eta(x)=1$ in a neighbourhood of $\mathrm{supp}\,\chi$. Then
\begin{equation}\label{loc-estimate}
\|\chi u,H^{s,\varphi}_{p}(\Omega)\|\leq
c\biggl(\|\chi f,H^{s-2q,\varphi}_{p}(\Omega)\|+
\sum_{j=1}^{q}\|\chi g_{j},B^{s-m_{j}-1/p,\varphi}_{p}(\Gamma)\|+
\|\eta u,H^{s-1,\varphi}_{p}(\Omega)\|\biggr).
\end{equation}
Here, $c$ is a certain positive number that does not depend on $u$, $f$, and $g_{1},\ldots,g_{q}$.
\end{theorem}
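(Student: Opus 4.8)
The plan is to deduce \eqref{loc-estimate} from a \emph{global} a~priori estimate for $(A,B)$ on $H^{s,\varphi}_{p}(\Omega)$, which itself follows from the Fredholm property in Theorem~\ref{th-Fredholm}, and then to localize it by the same commutator identity that was used in the proof of Theorem~\ref{th-loc-regularity}. Throughout, $c$ will denote a positive constant that may change from line to line and is allowed to depend on $s,\varphi,p,\chi,\eta$ but not on $u,f,g_{1},\dots,g_{q}$. Note first that the hypotheses of Theorem~\ref{th-loc-regularity} are in force, so that theorem gives $\chi u,\eta u\in H^{s,\varphi}_{p}(\Omega)$, while \eqref{cond-loc-regularity} gives $\chi f\in H^{s-2q,\varphi}_{p}(\Omega)$ and $\chi g_{j}\in B^{s-m_{j}-1/p,\varphi}_{p}(\Gamma)$; thus both sides of \eqref{loc-estimate} are finite.

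The first step is the \emph{basic estimate}: for every $w\in H^{s,\varphi}_{p}(\Omega)$,
\begin{equation*}
\|w,H^{s,\varphi}_{p}(\Omega)\|\leq c\bigl(\|(A,B)w,\Xi^{s-2q,\varphi}_{p}(\Omega,\Gamma)\|+\|w,H^{s-1,\varphi}_{p}(\Omega)\|\bigr).
\end{equation*}
I would prove it as follows. By Theorem~\ref{th-Fredholm} the kernel $N:=\ker(A,B)$ is finite-dimensional and contained in $C^{\infty}(\overline{\Omega})$; fix a basis $n_{1},\dots,n_{d}$ of $N$ and choose $\psi_{1},\dots,\psi_{d}\in C^{\infty}_{0}(\Omega)$ with $\langle n_{i},\psi_{j}\rangle=\delta_{ij}$, so that $\Pi w:=\sum_{j}\langle w,\psi_{j}\rangle n_{j}$ is a projection onto $N$ which is bounded on every $H^{\sigma,\varphi}_{p}(\Omega)$. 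The map $w\mapsto((A,B)w,\Pi w)$ from $H^{s,\varphi}_{p}(\Omega)$ into $\Xi^{s-2q,\varphi}_{p}(\Omega,\Gamma)\times N$ is injective and is a finite-rank perturbation of $(A,B)$, hence Fredholm, hence bounded below. Since each $\psi_{j}\in C^{\infty}_{0}(\Omega)$ defines a continuous functional on $H^{s-1,\varphi}_{p}(\Omega)$ (via the embedding $H^{s-1,\varphi}_{p}(\Omega)\hookrightarrow\mathcal{S}'(\Omega)$), we get $\|\Pi w\|\leq c\sum_{j}|\langle w,\psi_{j}\rangle|\leq c\,\|w,H^{s-1,\varphi}_{p}(\Omega)\|$, and the basic estimate follows. (Equivalently it follows by the usual contradiction argument using the compact embedding $H^{s,\varphi}_{p}(\Omega)\hookrightarrow H^{s-1,\varphi}_{p}(\Omega)$, valid because $\Omega$ is bounded, together with the closedness of the range of $(A,B)$ and the equivalence of all norms on $N$.)

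Next I would localize. Apply the basic estimate to $w=\chi u$. Since $\chi\eta=\chi$ and $\eta=1$ near $\operatorname{supp}\chi$, permuting $(A,B)$ with multiplication by $\chi$ exactly as in the proof of Theorem~\ref{th-loc-regularity} yields
\begin{equation*}
(A,B)(\chi u)=(\chi f,\chi g_{1},\dots,\chi g_{q})+(A',B')(\eta u),
\end{equation*}
where $(A',B')$ has the structure of $(A,B)$ but with each component of order one less. By \eqref{(A,B)prime} with $s$ replaced by $s-1$ (admissible since $s-1>m-1+1/p$), the operator $(A',B'):H^{s-1,\varphi}_{p}(\Omega)\to\Xi^{s-2q,\varphi}_{p}(\Omega,\Gamma)$ is bounded, and multiplication by $\chi$ is bounded on each space occurring in $\Xi^{s-2q,\varphi}_{p}(\Omega,\Gamma)$ and on $H^{s-1,\varphi}_{p}(\Omega)$. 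Hence the displayed identity gives
\begin{equation*}
\|(A,B)(\chi u),\Xi^{s-2q,\varphi}_{p}(\Omega,\Gamma)\|\leq c\Bigl(\|\chi f,H^{s-2q,\varphi}_{p}(\Omega)\|+\sum_{j=1}^{q}\|\chi g_{j},B^{s-m_{j}-1/p,\varphi}_{p}(\Gamma)\|+\|\eta u,H^{s-1,\varphi}_{p}(\Omega)\|\Bigr),
\end{equation*}
while $\|\chi u,H^{s-1,\varphi}_{p}(\Omega)\|=\|\chi(\eta u),H^{s-1,\varphi}_{p}(\Omega)\|\leq c\,\|\eta u,H^{s-1,\varphi}_{p}(\Omega)\|$. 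Substituting both into the basic estimate for $w=\chi u$ yields \eqref{loc-estimate}.

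I expect no serious obstacle: the commutator computation and the boundedness of the auxiliary operators are routine (the latter already proved, or obtained by interpolation exactly as in the earlier proofs). The only point demanding some care is the basic estimate — specifically, that the functionals annihilating $N$ are continuous on the lower-smoothness space $H^{s-1,\varphi}_{p}(\Omega)$, and that $N$ (hence the whole scheme) is the same for all admissible $s,\varphi,p$, both of which are guaranteed by Theorem~\ref{th-Fredholm}.
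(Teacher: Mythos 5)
Your proposal is correct and follows essentially the same route as the paper: a global a priori estimate coming from the Fredholm property of $(A,B)$ (the paper invokes Peetre's lemma plus the compact embedding $H^{s,\varphi}_{p}(\Omega)\hookrightarrow H^{s-1,\varphi}_{p}(\Omega)$, which is the contradiction argument you mention as an alternative to your explicit projection onto $\ker(A,B)$), followed by localization through the identity $(A,B)(\chi u)=\chi F+(A',B')(\eta u)$ and the boundedness of $(A',B'):H^{s-1,\varphi}_{p}(\Omega)\to\Xi^{s-2q,\varphi}_{p}(\Omega,\Gamma)$. The only cosmetic difference is your biorthogonal-projection derivation of the basic estimate, which is a standard equivalent of the paper's appeal to Peetre's lemma.
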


\begin{proof}
If $\Omega_{0}=\Omega$ and $\Gamma_{0}=\Gamma$, then we may take $\chi(t)\equiv\eta(t)\equiv1$ in \eqref{loc-estimate}, which means the global \textit{a~priori} estimate of~$u$. In this case, estimate \eqref{loc-estimate} follows by Peetre's lemma \cite[Lemma~3]{Peetre61} from Theorem~\ref{th-Fredholm} (from the fact that operator \eqref{(A,B)} has finite-dimensional kernel and closed range) and the compact embedding $H^{s,\varphi}_{p}(\Omega)\hookrightarrow H^{s-1,\varphi}_{p}(\Omega)$ (in fact, the continuity of this embedding is enough). In the general case, this estimate is deduced from its global version with the help of \eqref{(A,B)-chi-permuted} where $F=(A,B)u$. Namely:
\begin{align*}
\|\chi u,H^{s,\varphi}_{p}(\Omega)\|&\leq c_1
\bigl(\|(A, B)(\chi u),\Xi^{s-2q,\varphi}_{p}(\Omega,\Gamma)\|+
\|\chi u,H^{s-1,\varphi}_{p}(\Omega) \|\bigr)\\
&\leq c_1\bigl(\|\chi(A, B)u,\Xi^{s-2q,\varphi}_{p}(\Omega,\Gamma)\|+
\|(A',B')(\eta u),\Xi^{s-2q,\varphi}_{p}(\Omega,\Gamma)\|\bigr)\\
&+c_1\|\chi u,H^{s-1,\varphi}_{p}(\Omega)\|,
\end{align*}
where
\begin{equation*}
\|(A',B')(\eta u),\Xi^{s-2q,\varphi}_{p}(\Omega,\Gamma)\|
\leq c_{2}\|\eta u,H^{s-1,\varphi}_{p}(\Omega)\|
\end{equation*}
(see \eqref{(A,B)prime}) and
\begin{equation*}
\|\chi u,H^{s-1,\varphi}_{p}(\Omega)\|=
\|\chi\eta u,H^{s-1,\varphi}_{p}(\Omega)\|\leq
c_{3}\|\eta u,H^{s-1,\varphi}_{p}(\Omega)\|.
\end{equation*}
Here, the positive numbers $c_{1}$, $c_{2}$, and $c_{3}$ do not depend on $u$ (and hence on $f$ and $g_{1},\ldots,g_{q}$). This gives the required estimate \eqref{loc-estimate}.
\end{proof}

Completing this section, we apply Theorem~\ref{th-loc-regularity} to find a new condition for a solution $u$ of the elliptic problem to be $\ell$ times continuously differentiable on the set $\Omega_{0}\cup\Gamma_{0}$, i.e. for $u\in C^{\ell}(\Omega_{0}\cup\Gamma_{0})$.

\begin{theorem}\label{th-classical-smoothness}
Let $p\in(1,\infty)$, $0\leq\ell\in\mathbb{Z}$, and $\ell>m-(n-1)/p$. Suppose that a distribution $u\in\Lambda_{m}(\Omega)$ is a generalized solution to the elliptic problem \eqref{ell-equ}, \eqref{bound-cond} whose right-hand sides satisfy the following conditions:
\begin{equation}\label{cond-smoothness-data}
f\in H^{\ell-2q+n/p,\varphi}_{p,\mathrm{loc}}(\Omega_{0},\Gamma_{0})
\;\;\mbox{and}\;\;
g_{j}\in B^{\ell-m_{j}+(n-1)/p,\varphi}_{p,\mathrm{loc}}(\Gamma_{0})\;\;
\mbox{whenever}\;\;j\in\{1,\ldots,q\}
\end{equation}
for a certain function parameter $\varphi\in\Upsilon$ subject to \eqref{integral-condition}. Then $u\in C^{\ell}(\Omega_{0}\cup\Gamma_{0})$.
\end{theorem}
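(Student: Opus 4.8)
\emph{Plan.} The idea is to reduce to Theorem~\ref{th-loc-regularity} by taking $s:=\ell+n/p$, to combine this with the embedding of the critical space $H^{\ell+n/p,\varphi}_{p}$ into $C^{\ell}$ that holds under \eqref{integral-condition}, and finally to localize by a partition of unity. With $s=\ell+n/p$ the hypothesis $\ell>m-(n-1)/p$ becomes exactly $s>m+1/p$, while the smoothness exponents in \eqref{cond-smoothness-data} are $s-2q=\ell-2q+n/p$ and $s-m_{j}-1/p=\ell-m_{j}+(n-1)/p$; thus \eqref{cond-smoothness-data} is precisely \eqref{cond-loc-regularity} for this $s$. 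Hence Theorem~\ref{th-loc-regularity} gives $u\in H^{\ell+n/p,\varphi}_{p,\mathrm{loc}}(\Omega_{0},\Gamma_{0})$; equivalently, $\chi u\in H^{\ell+n/p,\varphi}_{p}(\Omega)$ for every $\chi\in C^{\infty}(\overline{\Omega})$ whose support lies in the relatively open set $\Omega_{0}\cup\Gamma_{0}=\overline{\Omega}\cap U$.

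The core of the proof is the continuous embedding $H^{\ell+n/p,\varphi}_{p}(\mathbb{R}^{n})\hookrightarrow C^{\ell}(\mathbb{R}^{n})$ (into functions bounded together with all their continuous derivatives up to order $\ell$) whenever $\varphi\in\Upsilon$ obeys \eqref{integral-condition}. Since each $D^{\alpha}$ with $|\alpha|\leq\ell$ acts boundedly from $H^{\ell+n/p,\varphi}_{p}(\mathbb{R}^{n})$ to $H^{n/p,\varphi}_{p}(\mathbb{R}^{n})$, it suffices to treat the case $\ell=0$, namely $H^{n/p,\varphi}_{p}(\mathbb{R}^{n})\hookrightarrow C_{b}(\mathbb{R}^{n})$. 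I would argue by duality: the isometric isomorphism $w\mapsto w_{s,\varphi}$ onto $L_{p}(\mathbb{R}^{n})$ identifies the dual of $H^{n/p,\varphi}_{p}(\mathbb{R}^{n})$ with $H^{-n/p,1/\varphi}_{p'}(\mathbb{R}^{n})$ through the extension of the $L_{2}$ pairing (note $1/\varphi\in\Upsilon$), and translations act isometrically on these spaces. Therefore, for $w\in C^{\infty}_{0}(\mathbb{R}^{n})$,
\begin{equation*}
|w(x)|=|\langle\delta_{x},w\rangle|\leq
\|\delta_{0},H^{-n/p,1/\varphi}_{p'}(\mathbb{R}^{n})\|\,
\|w,H^{n/p,\varphi}_{p}(\mathbb{R}^{n})\|
\end{equation*}
for all $x\in\mathbb{R}^{n}$, and density of $C^{\infty}_{0}(\mathbb{R}^{n})$ together with continuity of the uniform limit yields the embedding, provided $\delta_{0}\in H^{-n/p,1/\varphi}_{p'}(\mathbb{R}^{n})$. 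By the definition of this space, that amounts to $G:=\mathcal{F}^{-1}[\langle\xi\rangle^{-n/p}\varphi(\langle\xi\rangle)^{-1}]\in L_{p'}(\mathbb{R}^{n})$.

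I expect the estimate of this Bessel-type kernel $G$ to be the main obstacle. Condition~(ii) of $\Upsilon$ gives $|\partial^{\alpha}_{\xi}(\langle\xi\rangle^{-n/p}\varphi(\langle\xi\rangle)^{-1})|\leq c_{\alpha}\langle\xi\rangle^{-n/p-|\alpha|}\varphi(\langle\xi\rangle)^{-1}$, whence $x^{\beta}G\in L_{\infty}(\mathbb{R}^{n})$ for every $\beta$ with $|\beta|>n/p'$; consequently $G$ is bounded and rapidly decreasing on $\{|x|\geq1\}$, so $G\in L_{p'}(\{|x|\geq1\})$. Near the origin one needs the bound $|G(x)|\leq c\,|x|^{-n/p'}\varphi(1/|x|)^{-1}$ for $0<|x|<1$, which is the standard asymptotics of the inverse Fourier transform of $\langle\xi\rangle^{-n/p}\varphi(\langle\xi\rangle)^{-1}$ and rests on the slow variation of $\varphi$ (it can be obtained by a dyadic decomposition of the multiplier together with Bernstein's inequality and the quasi-monotonicity of $t\mapsto t^{n/p'}\varphi(t)^{-1}$, or taken from the literature on potentials of slowly varying order; for $p=2$ it is contained in \cite{MikhailetsMurach14}). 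Passing to polar coordinates and substituting $t=1/|x|$,
\begin{equation*}
\int\limits_{|x|<1}|G(x)|^{p'}\,dx\leq c'\int\limits_{1}^{\infty}\frac{dt}{t\,\varphi^{p'}(t)}<\infty
\end{equation*}
by \eqref{integral-condition}, so $G\in L_{p'}(\mathbb{R}^{n})$ and the embedding is established.

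It remains to globalize and localize. Since $\Omega$ is bounded, every $v\in H^{\ell+n/p,\varphi}_{p}(\Omega)$ is the restriction to $\Omega$ of some $w\in H^{\ell+n/p,\varphi}_{p}(\mathbb{R}^{n})\subset C^{\ell}(\mathbb{R}^{n})$, hence $H^{\ell+n/p,\varphi}_{p}(\Omega)\hookrightarrow C^{\ell}(\overline{\Omega})$; in particular $\chi u\in C^{\ell}(\overline{\Omega})$ for every admissible $\chi$. Given any point $x_{0}\in\Omega_{0}\cup\Gamma_{0}$, choose $\chi\in C^{\infty}(\overline{\Omega})$ with $\mathrm{supp}\,\chi\subset\Omega_{0}\cup\Gamma_{0}$ and $\chi\equiv1$ in a relative neighbourhood of $x_{0}$ in $\overline{\Omega}$ (possible because $\Omega_{0}\cup\Gamma_{0}$ is open in $\overline{\Omega}$); then $u=\chi u\in C^{\ell}(\overline{\Omega})$ on that neighbourhood. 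As $x_{0}$ was arbitrary, $u\in C^{\ell}(\Omega_{0}\cup\Gamma_{0})$, which completes the proof.
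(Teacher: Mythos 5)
Your proof follows the same route as the paper's for the theorem itself: set $s:=\ell+n/p$, observe that $\ell>m-(n-1)/p$ is exactly $s>m+1/p$ and that \eqref{cond-smoothness-data} is \eqref{cond-loc-regularity} for this $s$, apply Theorem~\ref{th-loc-regularity} to get $\chi u\in H^{\ell+n/p,\varphi}_{p}(\Omega)$, extend to $\mathbb{R}^{n}$, use the critical embedding $H^{\ell+n/p,\varphi}_{p}(\mathbb{R}^{n})\hookrightarrow C^{\ell}_{\mathrm{b}}(\mathbb{R}^{n})$ under \eqref{integral-condition}, and conclude by the arbitrariness of the point and cutoff. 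The only place where you diverge is the embedding itself: the paper isolates it as Proposition~\ref{prop-embedding-in-C}(i) and cites it to Kalyabin--Lizorkin (with the reduction of $\ell\geq1$ to $\ell=0$ via $\partial/\partial x_{j}$, exactly as you do), whereas you sketch a self-contained duality argument, reducing the matter to $\delta_{0}\in H^{-n/p,1/\varphi}_{p'}(\mathbb{R}^{n})$, i.e.\ to $G:=\mathcal{F}^{-1}[\langle\xi\rangle^{-n/p}\varphi^{-1}(\langle\xi\rangle)]\in L_{p'}(\mathbb{R}^{n})$. That reduction is sound (the duality $\bigl(H^{n/p,\varphi}_{p}\bigr)'=H^{-n/p,1/\varphi}_{p'}$ and the translation invariance are exactly as in the paper's proof of part~(ii) of the same proposition), and your treatment of $G$ away from the origin via derivatives of the symbol is fine. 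The one step you do not actually prove is the near-origin bound $|G(x)|\leq c\,|x|^{-n/p'}\varphi(1/|x|)^{-1}$; this is precisely the content of the Kalyabin--Lizorkin theorem the paper invokes (the paper itself uses the equivalence of $G\in L_{p'}$ with \eqref{integral-condition} as a quoted fact), so either carry out the dyadic-decomposition estimate you allude to or cite that result directly. With that single point supplied, the argument is complete.
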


The proof is based on the following result:

\begin{proposition}\label{prop-embedding-in-C}
Let $1\leq n\in\mathbb{Z}$, $p\in(1,\infty)$, $0\leq\ell\in\mathbb{Z}$, and $\varphi\in\Upsilon$. Then the following assertions hold true:
\begin{itemize}
\item[(i)] Condition \eqref{integral-condition} implies the continuous embedding $H^{\ell+n/p,\varphi}_{p}(\mathbb{R}^{n})\hookrightarrow C^{\ell}_{\mathrm{b}}(\mathbb{R}^{n})$.
\item[(ii)] Suppose that $V$ is an open nonempty subset of $\mathbb{R}^{n}$. The embedding
\begin{equation}\label{H-C-embedding-local}
\bigl\{w\in H^{\ell+n/p,\varphi}_{p}(\mathbb{R}^{n}):
\mathrm{supp}\,w\subset V\bigr\}\subset C^{\ell}(\mathbb{R}^{n})
\end{equation}
implies condition \eqref{integral-condition}.
\end{itemize}
\end{proposition}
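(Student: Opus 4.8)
The plan is to prove (i) via the convolution kernel that realizes the $H^{\ell+n/p,\varphi}_{p}$-norm and (ii), dually, via an extremal sequence of compactly supported functions built from a dyadic frequency decomposition. For (i) I would first reduce to $\ell=0$: since $w\mapsto\partial^{\alpha}w$ maps $H^{s,\varphi}_{p}(\mathbb{R}^{n})$ boundedly into $H^{s-|\alpha|,\varphi}_{p}(\mathbb{R}^{n})$ (the symbol $(i\xi)^{\alpha}\langle\xi\rangle^{-|\alpha|}$ is an $L_{p}$-multiplier) and $H^{\ell+n/p-|\alpha|,\varphi}_{p}\hookrightarrow H^{n/p,\varphi}_{p}$ for $|\alpha|\le\ell$ (a Bessel-potential multiplier), the embedding $H^{n/p,\varphi}_{p}\hookrightarrow C_{\mathrm{b}}$ forces all distributional derivatives of any $w\in H^{\ell+n/p,\varphi}_{p}$ up to order $\ell$ to be bounded continuous functions, whence $w\in C^{\ell}_{\mathrm{b}}$ with the corresponding norm estimate. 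To get $H^{n/p,\varphi}_{p}(\mathbb{R}^{n})\hookrightarrow C_{\mathrm{b}}(\mathbb{R}^{n})$, I would use the isometry $w\mapsto w_{n/p,\varphi}$ onto $L_{p}$ to write $w=G*w_{n/p,\varphi}$ with $G:=\mathcal{F}^{-1}[\langle\xi\rangle^{-n/p}\varphi(\langle\xi\rangle)^{-1}]$; it then suffices to show $G\in L_{p'}(\mathbb{R}^{n})$ under \eqref{integral-condition}, because then $\|w\|_{\infty}\le\|G\|_{p'}\,\|w\|_{H^{n/p,\varphi}_{p}}$ and $w\in C_{0}\subset C_{\mathrm{b}}$ since $L_{p'}*L_{p}\subset C_{0}(\mathbb{R}^{n})$. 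For the membership $G\in L_{p'}$ I would establish the pointwise bounds $|G(x)|\lesssim|x|^{-n/p'}\varphi(1/|x|)^{-1}$ for $0<|x|\le1$ and $|G(x)|\lesssim|x|^{-M}$ for $|x|\ge1$ with $M$ arbitrarily large: split the defining integral at $|\xi|\sim1/|x|$, bound the inner part by the $L_{1}$-norm of the symbol, and treat the outer part by repeated integration by parts against $e^{ix\xi}$, using property~(ii) of $\Upsilon$ to obtain $|\partial^{\alpha}_{\xi}(\langle\xi\rangle^{-n/p}\varphi(\langle\xi\rangle)^{-1})|\lesssim_{\alpha}\langle\xi\rangle^{-n/p-|\alpha|}\varphi(\langle\xi\rangle)^{-1}$, and Karamata's theorem to evaluate the resulting radial integrals $\int_{0}^{1/|x|}r^{n/p'-1}\varphi(r)^{-1}\,dr\asymp|x|^{-n/p'}\varphi(1/|x|)^{-1}$ and $\int_{1/|x|}^{\infty}r^{n/p'-1-M}\varphi(r)^{-1}\,dr\asymp|x|^{M-n/p'}\varphi(1/|x|)^{-1}$ (for $M>n/p'$). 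Integrating these bounds gives $\int_{\mathbb{R}^{n}}|G|^{p'}\lesssim 1+\int_{1}^{\infty}t^{-1}\varphi(t)^{-p'}\,dt<\infty$.

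For (ii) I would argue by contraposition: assume \eqref{integral-condition} fails, i.e.\ $\sum_{k\ge0}\varphi(2^{k})^{-p'}=\infty$, and produce a family of functions supported in $V$, of bounded $H^{\ell+n/p,\varphi}_{p}$-norm, whose $\ell$-th derivative is unbounded at a point. After translating so that $0\in V$ and fixing a closed ball $\overline{B}\subset V$, the closed graph theorem applied to the inclusion of the Banach space $\{w\in H^{\ell+n/p,\varphi}_{p}:\operatorname{supp}w\subset\overline{B}\}$ into $C^{\ell}(\overline{B})$ gives $|\partial_{1}^{\ell}w(0)|\le C\|w\|_{H^{\ell+n/p,\varphi}_{p}}$ for all such $w$. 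I would then set $\widetilde{w}_{N}:=\sum_{k=0}^{N}a_{k}g_{k}$, where $g_{k}(x):=2^{kn/p}G_{0}(2^{k}x)$ for a fixed $G_{0}$ with $\widehat{G_{0}}$ supported in a thin annulus away from the origin and $G_{0}(0)\ne0\ne\partial_{1}^{\ell}G_{0}(0)$ (so the $\widehat{g_{k}}$ have pairwise disjoint supports in annuli $\{|\xi|\sim2^{k}\}$, $\|g_{k}\|_{p}\asymp1$, $g_{k}(0)\asymp2^{kn/p}$, $\partial_{1}^{\ell}g_{k}(0)\asymp2^{k(\ell+n/p)}$), with $a_{k}:=\varphi(2^{k})^{-p'}2^{-k(\ell+n/p)}$, and finally $w_{N}:=\psi_{0}\widetilde{w}_{N}$ with $\psi_{0}\in C^{\infty}_{0}(B)$ equal to $1$ near $0$. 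Since multiplication by $\psi_{0}$ is bounded on $H^{\ell+n/p,\varphi}_{p}$, it suffices to compute $\|\widetilde{w}_{N}\|_{H^{\ell+n/p,\varphi}_{p}}$; applying the Littlewood--Paley inequality and property~(ii) of $\Upsilon$ one finds $(\widetilde{w}_{N})_{\ell+n/p,\varphi}\approx\sum_{k}b_{k}2^{kn/p}h_{k}(2^{k}\cdot)$ with $b_{k}=a_{k}2^{k(\ell+n/p)}\varphi(2^{k})=\varphi(2^{k})^{-p'/p}$ and $h_{k}\to G_{0}$ with $\|h_{k}\|_{p}\asymp1$ uniformly, so that, the bumps $h_{k}(2^{k}\cdot)$ of width $2^{-k}$ piling up at the origin, $\|\widetilde{w}_{N}\|_{H^{\ell+n/p,\varphi}_{p}}^{p}\asymp\sum_{K\le N}2^{-Kn}\bigl(\sum_{k\le K}b_{k}^{2}2^{2kn/p}\bigr)^{p/2}\asymp\sum_{k\le N}\varphi(2^{k})^{-p'}$, while $\partial_{1}^{\ell}w_{N}(0)=\partial_{1}^{\ell}\widetilde{w}_{N}(0)\asymp\sum_{k\le N}a_{k}2^{k(\ell+n/p)}=\sum_{k\le N}\varphi(2^{k})^{-p'}$. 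Dividing $w_{N}$ by its norm yields a function supported in $B\subset V$, of unit $H^{\ell+n/p,\varphi}_{p}$-norm, with $|\partial_{1}^{\ell}(\cdot)(0)|\asymp\bigl(\sum_{k\le N}\varphi(2^{k})^{-p'}\bigr)^{1/p}\to\infty$, contradicting the closed graph bound; hence \eqref{integral-condition} must hold.

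The main obstacle in both parts is the \emph{sharp} evaluation of the $H^{\ell+n/p,\varphi}_{p}$-norm, i.e.\ the interplay between the slow variation and the derivative control of $\varphi$ and the singular Bessel-potential geometry. In (i) this is the pointwise kernel estimate $|G(x)|\lesssim|x|^{-n/p'}\varphi(1/|x|)^{-1}$, where the exponents must be tracked exactly through the integrations by parts and the Karamata asymptotics; the subtlety is that a cruder argument through classical Besov embeddings would yield only a strictly stronger summability condition on $\varphi$ when $p<2$, so one genuinely has to exploit the fine structure of the Triebel--Lizorkin-type space $H^{s,\varphi}_{p}$ (fine index $2$). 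In (ii) the corresponding difficulty is the two-sided comparison of the Littlewood--Paley square function of the shrinking bumps with the weighted quantity $\bigl(\sum_{K}2^{-Kn}(\sum_{k\le K}b_{k}^{2}2^{2kn/p})^{p/2}\bigr)^{1/p}$, which rests on localizing to the sub-annuli $\{|x|\sim2^{-K}\}$ on which exactly the pieces with $k\le K$ are active and on controlling the rapidly decaying tails of the remaining pieces, together with the (routine) $\ell^{p}$--$\ell^{p'}$ duality showing that the chosen $(a_{k})$ is extremal and returns precisely $\bigl(\sum\varphi(2^{k})^{-p'}\bigr)^{1/p'}$.
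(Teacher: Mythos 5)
Your proposal is sound, and it takes a genuinely different (and more self-contained) route than the paper, most notably in part (ii). For (i), the reduction to $\ell=0$ via the boundedness of $\partial/\partial x_j:H^{s,\varphi}_{p}\to H^{s-1,\varphi}_{p}$ is exactly the paper's step; but where the paper then simply cites the Kalyabin--Lizorkin criterion \cite[Theorem~5.3]{KalyabinLizorkin87} (with \cite{CobosFernandez88} and \cite{Kalyabin81MathMotes}) for the $\ell=0$ embedding, you reprove that lemma by showing $G=\mathcal{F}^{-1}[\langle\xi\rangle^{-n/p}\varphi(\langle\xi\rangle)^{-1}]\in L_{p'}(\mathbb{R}^{n})$ under \eqref{integral-condition}; your pointwise bound $|G(x)|\lesssim|x|^{-n/p'}\varphi(1/|x|)^{-1}$ near the origin (dyadic splitting, integration by parts using property (ii) of $\Upsilon$, Karamata) is the standard Bessel-kernel estimate and does yield $\int_{|x|\le1}|G|^{p'}\lesssim\int_{1}^{\infty}t^{-1}\varphi^{-p'}(t)\,dt$. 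For (ii) the paper argues by duality: the local embedding forces $\partial^{\alpha}\delta$ into the dual space $H^{-\ell-n/p,1/\varphi}_{p'}(\mathbb{R}^{n})$ (identified via Volevich--Paneah), a lifting reduces this to $G\in L_{p'}$, and the equivalence of that membership with \eqref{integral-condition} is again cited from \cite{VolevichPaneah65} and \cite{KalyabinLizorkin87}. You instead run a contrapositive construction: a lacunary sum $\sum_{k\le N}a_{k}g_{k}$ of frequency-localized bumps, with $(a_{k})$ the $\ell^{p}$--$\ell^{p'}$ extremizer, whose $H^{\ell+n/p,\varphi}_{p}$-norm you control by Littlewood--Paley while $\partial_{1}^{\ell}(\cdot)(0)$ accumulates $S_{N}:=\sum_{k\le N}\varphi(2^{k})^{-p'}$, contradicting the closed-graph bound when $S_{N}\to\infty$. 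This is heavier (the square-function computation on the annuli $|x|\sim2^{-K}$ must really be carried out, though for the contradiction only the \emph{upper} bound $\|\widetilde w_{N}\|^{p}\lesssim S_{N}$ is needed), but it bypasses the duality identification and the necessity half of the cited Fourier-multiplier criterion entirely, and it exhibits the quantitative blow-up rate. Two harmless slips: the normalized derivative grows like $S_{N}^{1/p'}$, not $S_{N}^{1/p}$, and the rescaled profiles $h_{k}$ converge to $\mathcal{F}^{-1}[\,|\cdot|^{\ell+n/p}\widehat{G_{0}}\,]$ rather than to $G_{0}$; neither affects the argument, since only uniform decay bounds on $h_{k}$ and the exact value of $\partial_{1}^{\ell}\widetilde w_{N}(0)$ are used.
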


Here, $C^{\ell}_{\mathrm{b}}(\mathbb{R}^{n})$ stands for the Banach space of $\ell$ times continuously differentiable functions on $\mathbb{R}^{n}$ whose partial derivatives up to the order $\ell$ are all bounded on $\mathbb{R}^{n}$.

Assertion (i) in the $\ell=0$ case follows from \cite[Theorem~5.3]{KalyabinLizorkin87} with regard to \cite[Theorem~3.4]{CobosFernandez88} (see also \cite[Theorem~1]{Kalyabin81MathMotes}, where the proof is given). The $\ell\geq1$ case is reduced to the previous one because the differential operator $\partial/\partial x_{j}$ acts continuously from $H^{s,\varphi}_{p}(\mathbb{R}^{n})$ to $H^{s-1,\varphi}_{p}(\mathbb{R}^{n})$ for every $s\in\mathbb{R}$. Assertion (ii) is proved by a modification of the reasoning given in
\cite[Proof of Theorem~13.2]{VolevichPaneah65} in the $\ell=0$ case.
Namely, we may assume that $0\in V$; then \eqref{H-C-embedding-local} implies the continuous embedding $H^{\ell+n/p,\varphi}_{p}(V_{0})\hookrightarrow C^{\ell}(\overline{V_{0}})$ for a certain ball $V_{0}$ centered at $0$, which gives the estimate
\begin{equation*}
|\partial^{\alpha}w(0)|\leq\mathrm{const}\,
\|w,H^{\ell+n/p,\varphi}_{p}(\mathbb{R}^{n})\|
\end{equation*}
for every $w\in\mathcal{S}(\mathbb{R}^{n})$ and each partial derivative $\partial^{\alpha}$ of the order $|\alpha|\leq\ell$. Hence, the delta-function $\delta$ and all its partial derivatives $\partial^{\alpha}\delta$ with $|\alpha|\leq\ell$ belong to the dual of $H^{\ell+n/p,\varphi}_{p}(\mathbb{R}^{n})$. Since this dual equals $H^{-\ell-n/p,1/\varphi}_{p'}(\mathbb{R}^{n})$ \cite[Section~13, Subsection~3]{VolevichPaneah65}, we have the inclusion
$\partial^{\alpha}\Phi\delta=\Phi(\partial^{\alpha}\delta)\in H^{-\ell-n/p}_{p'}(\mathbb{R}^{n})$ whatever $0\leq|\alpha|\leq\ell$, with $\Phi w:=
\mathcal{F}^{-1}[\varphi^{-1}(\langle\xi\rangle)\mathcal{F}w(\xi)]$ whenever $w\in\mathcal{S}'(\mathbb{R}^{n})$. Hence, $\Phi\delta\in H^{-n/p}_{p'}(\mathbb{R}^{n})$ (see, e.g., \cite[Theorem~2.3.8(ii)]{Triebel83}), which gives $\delta\in H^{-n/p,1/\varphi}_{p'}(\mathbb{R}^{n})$. The last inclusion means that the function $\mu(\xi):=\langle\xi\rangle^{n/p}\varphi(\langle\xi\rangle)$ of $\xi\in\mathbb{R}^{n}$ satisfies $\mathcal{F}^{-1}\mu^{-1}(\xi)\in L_{p'}(\mathbb{R}^{n})$. This inclusion is equivalent to \eqref{integral-condition} due to \cite[Theorem~13.2]{VolevichPaneah65} and \cite[Theorem~5.3]{KalyabinLizorkin87}. Thus, \eqref{H-C-embedding-local} implies \eqref{integral-condition}.

\begin{proof}[Proof of Theorem~$\ref{th-classical-smoothness}$.]
Choosing a point $x\in\Omega_{0}\cup\Gamma_{0}$ arbitrarily, we take a function $\chi\in C^{\infty}(\overline{\Omega})$ such that $\mathrm{supp}\,\chi\subset\Omega_0\cup\Gamma_0$ and that $\chi=1$
in a neighbourhood $V(x)$ of $x$ (in the topology on $\overline{\Omega}$). Note that $s:=\ell+n/p>m+1/p$ by the hypothesis. Hence, owing to Theorem~\ref{th-loc-regularity}, the inclusion $\chi u\in H^{\ell+n/p,\varphi}_{p}(\Omega)$ holds true. Therefore, $\chi u$ has an extension
$w\in H^{\ell+n/p,\varphi}_{p}(\mathbb{R}^{n})\subset C^{\ell}(\mathbb{R}^{n})$ by hypothesis \eqref{integral-condition} and Proposition~\ref{prop-embedding-in-C}. Thus, $u\in C^{\ell}(V(x))$, which implies that $u\in C^{\ell}(\Omega_{0}\cup\Gamma_{0})$ due to the arbitrariness of $x\in\Omega_{0}\cup\Gamma_{0}$.
\end{proof}

\begin{remark}
Condition \eqref{integral-condition} is exact in Theorem~\ref{th-classical-smoothness}. Namely, it follows from the implication
\begin{equation}\label{implication-u}
\bigl(u\in\Lambda_{m}(\Omega)\;\,\mbox{and}\;\,
(f,g):=(A,B)u\;\,\mbox{satisfies}\;\,\eqref{cond-smoothness-data}\bigr)
\;\Longrightarrow\;u\in C^{\ell}(\Omega_{0}\cup\Gamma_{0})
\end{equation}
that $\varphi$ is subject to \eqref{integral-condition}. Here, $p$ and $\ell$ satisfy the hypotheses of the theorem, whereas $\varphi\in\Upsilon$ is chosen arbitrarily. Indeed, assuming \eqref{implication-u} to be valid, we consider an open ball $V\subset\Omega_{0}$ in $\mathbb{R}^{n}$
and arbitrarily choose a distribution $w\in H^{\ell+n/p,\varphi}(\mathbb{R}^{n})$ such that $\mathrm{supp}\,w\subset V$. Since the distribution $u:=w\!\upharpoonright\Omega\!\in H^{\ell+n/p,\varphi}(\Omega)$ satisfies the premise of implication \eqref{implication-u}, we have the inclusion $u\in C^{\ell}(\Omega_{0}\cup\Gamma_{0})$, which means that $w\in C^{l}(\mathbb{R}^{n})$. Thus, \eqref{H-C-embedding-local} holds true, which entails \eqref{integral-condition} by Proposition~\ref{prop-embedding-in-C}.
\end{remark}

If we remained in the framework of Sobolev spaces, we would have to change \eqref{cond-smoothness-data} for the rougher condition
\begin{equation*}
f\in H^{\ell-2q+n/p+\varepsilon}_{p,\mathrm{loc}}(\Omega_{0},\Gamma_{0})
\;\;\mbox{and}\;\;
g_{j}\in B^{\ell-m_{j}+(n-1)/p+\varepsilon}_{p,\mathrm{loc}}(\Gamma_{0})\;\;
\mbox{whenever}\;\;j\in\{1,\ldots,q\}
\end{equation*}
for some $\varepsilon>0$ to ensure the inclusion $u\in C^{\ell}(\Omega_{0}\cup\Gamma_{0})$. The use of the supplementary function parameter $\varphi\in\Upsilon$ allows attaining the limiting values of the number parameters in \eqref{cond-smoothness-data}.

\section{Applications to parameter-elliptic boundary-value problems}\label{sec4}

As in the previous section, integers $q\geq1$ and $m_{1},\ldots,m_{q}\geq0$ are arbitrarily chosen, and $m:=\max\{m_{1},\ldots,m_{q}\}$. We consider a boundary-value problem
\begin{gather} \label{4.24}
A(\lambda)\,u=f\quad\mbox{in}\quad\Omega,\\
B_{j}(\lambda)\,u=g_{j}\quad\mbox{on}\quad\Gamma,\quad j=1,\ldots,q,
\label{4.25}
\end{gather}
that depends on the complex-valued parameter $\lambda$ in the following way:
\begin{equation*}
A(\lambda):=\sum_{r=0}^{2q}\,\lambda^{2q-r}A_{r}\quad\;\mbox{and}\quad\;
B_{j}(\lambda):=\sum_{r=0}^{m_{j}}\,\lambda^{m_{j}-r}B_{j,r}.
\end{equation*}
Here, each $A_{r}$ is a linear PDO on $\overline{\Omega}$ with $\mathrm{ord}\,A_{r}\leq r$, and every
$B_{j,r}$ is a linear boundary PDO on $\Gamma$ with $\mathrm{ord}\,B_{j,r}\leq r$; all coefficients of these PDOs belong to $C^{\infty}(\overline{\Omega})$ and $C^{\infty}(\Gamma)$, resp. Put $B(\lambda):=(B_{1}(\lambda),\ldots,B_{q}(\lambda))$.

Let $K$ be a fixed closed angle on the complex plane with vertex at the origin (the case where $K$ degenerates into a ray is admissible). We suppose that the boundary-value problem \eqref{4.24} is parameter-elliptic in $K$, i.e. it satisfies the following two conditions (see, e.g. \cite[Subsection~9.1.1]{Roitberg96}):
\begin{itemize}
\item [(a)] For all $x\in\overline{\Omega}$,
$\xi\in\mathbb{R}^{n}$ and $\lambda\in K$ such that $|\xi|+|\lambda|\neq0$,
the inequality $A^{\circ}(x;\xi,\lambda)\neq0$ holds true.
\item [(b)] For arbitrarily fixed point $x\in\Gamma$, vector
$\xi\in\nobreak\mathbb{R}^{n}$, tangent to the boundary $\Gamma$ at the point $x$, and the parameter $\lambda\in K$ such that $|\xi|+|\lambda|\neq0$, the polynomials $B^{\circ}_{j}(x;\xi+\tau\nu(x),\lambda)$, $j=1,\ldots,q$, in $\tau\in\mathbb{C}$ are linearly independent modulo the polynomial $\prod_{j=1}^{q}(\tau-\tau^{+}_{j}(x;\xi,\lambda))$. Here,
$\tau^{+}_{1}(x;\xi,\lambda),\ldots,\tau^{+}_{q}(x;\xi,\lambda)$ are all $\tau$-roots of the polynomial $A^{\circ}(x;\xi+\tau\nu(x),\lambda)$ that have the positive imaginary part and are written with regard for their multiplicity, whereas $\nu(x)$ stands for the unit vector of the inner normal to $\Gamma$ at~$x$.
\end{itemize}
We have used the following notations:
\begin{equation*}
A^{\circ}(x;\xi,\lambda):=
\sum_{r=0}^{2q}\,\lambda^{2q-r}A^{\circ}_{r}(x,\xi)
\quad\mbox{for all}\quad
x\in\overline{\Omega},\;\xi\in\mathbb{C}^{n},\;\lambda\in\mathbb{C},
\end{equation*}
where $A^{\circ}_{r}(x,\xi)$ is the principal symbol of the PDO $A_{r}$ if $\mathrm{ord}\,A_{r}=r$, otherwise  $A^{\circ}_{r}(x,\xi)\equiv0$, and
\begin{equation*}
B^{\circ}_{j}(x;\xi,\lambda):=
\sum_{r=0}^{m_{j}}\,\lambda^{m_{j}-r}B^{\circ}_{j,r}(x,\xi)
\quad\mbox{for all}\quad x\in\Gamma,\;\xi\in \mathbb{C}^{n},\;\lambda\in\mathbb{C},
\end{equation*}
where $B^{\circ}_{j,r}(x,\xi)$ is the principal symbol of the boundary PDO $B_{j,r}$ if $\mathrm{ord}\,B_{j,r}=r$, otherwise  $B^{\circ}_{j,r}(x,\xi)\equiv0$. Thus, $A^{\circ}(x;\xi,\lambda)$ and $B^{\circ}_{j}(x;\xi,\lambda)$ are homogeneous polynomials in $(\xi,\lambda)$ of the orders $2q$ and $m_{j}$, resp.

Conditions (a) and (b) in the $\lambda=0$ case mean that the boundary-value problem \eqref{4.24}, \eqref{4.25} is elliptic for fixed $\lambda=0$. Since $\lambda$ affects only the lower order terms of the PDOs $A(\lambda)$ and $B_{j}(\lambda)$, this problem is elliptic for all $\lambda\in\mathbb{C}$. According to Theorem~\ref{th-Fredholm}, the mapping
\begin{equation*}
u\mapsto(A(\lambda)u,B_{1}(\lambda)u,
\ldots,B_{q}(\lambda)u)=(A(\lambda)u,B(\lambda)u),\quad\mbox{where}\quad
u\in C^{\infty}(\overline{\Omega}),
\end{equation*}
extends uniquely (by continuity) to a Fredholm bounded operator.
\begin{equation}\label{4.27}
(A(\lambda),B(\lambda)):H^{s,\varphi}_{p}(\Omega)\rightarrow
\Xi^{s-2q,\varphi}_{p}(\Omega,\Gamma)
\end{equation}
for arbitrary $p\in(1,\infty)$, $s>m+1/p$, $\varphi\in\Upsilon$, and
$\lambda\in\mathbb{C}$. The kernel and index of \eqref{4.27} are independent of $p$, $s$, and $\varphi$. Owing to \cite[Theorem 20.1.8]{Hermander07v3}, the index does not depend on $\lambda$ in the case where $p=2$ and $\varphi(\cdot)\equiv1$. Hence, the index is independent of $\lambda$ in the general case.

Since the boundary-value problem \eqref{4.24}, \eqref{4.25} is parameter-elliptic in the angle $K$, operator \eqref{4.27} has additional important properties.

\begin{theorem}\label{th4.9}
The following assertions are true:
\begin{itemize}
\item[$\mathrm{(i)}$] There exists a number $\lambda_{0}>0$ such that, for each  $\lambda\in K$ with $|\lambda|\geq\nobreak\lambda_{0}$, operator \eqref{4.27} is an isomorphism of $H^{s,\varphi}_{p}(\Omega)$ onto $\Xi^{s,\varphi}_{p}(\Omega,\Gamma)$ whatever $p\in(1,\infty)$, $s>m+1/p$, and $\varphi\in\Upsilon$.
\item[$\mathrm{(ii)}$] For arbitrarily fixed parameters $p\in(1,\infty)$, $s>\max\{2q,m+1/p\}$, and $\varphi\in\Upsilon$, there exists a number $c=c(p,s,\varphi)\geq1$ such that, for each $\lambda\in K$ with $|\lambda|\geq\max\{\lambda_{0},1\}$ and for every distribution $u\in H^{s,\varphi}_{p}(\Omega)$, we have the two-sided estimate
\begin{align}
&c^{-1}\bigl(\|u,H^{s,\varphi}_{p}(\Omega)\|+
|\lambda|^{s}\varphi(|\lambda|)\,\|u,L_{p}(\Omega)\|\bigr) \notag \\
&\leq \|A(\lambda)u,H^{s-2q,\varphi}_{p}(\Omega)\|+
|\lambda|^{s-2q}\varphi(|\lambda|)\,\|A(\lambda)u,L_{p}(\Omega)\| \notag
\\ \notag
&\quad+\sum_{j=1}^{q}\,\bigl(\|B_{j}(\lambda)u, B^{s-m_{j}-1/p,\varphi}_{p}(\Gamma)\| +|\lambda|^{s-m_{j}-1/p}\varphi(|\lambda|)\,
\|B_{j}(\lambda)u,B^{0}_{p}(\Gamma)\|\bigr)\\
&\leq c\,\bigl(\|u,H^{s,\varphi}_{p}(\Omega)\|+
|\lambda|^{s}\varphi(|\lambda|)\|u,L_{p}(\Omega)\|\bigr). \label{4.28}
\end{align}
Here, the number $c$ does not depend on $u$ and $\lambda$.
\end{itemize}
\end{theorem}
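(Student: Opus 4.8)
The plan is to reduce both assertions to the classical parameter-elliptic theory in $L_{p}$-Sobolev spaces and to the case $p=2$ of the refined scale, and then to propagate them to all $p\in(1,\infty)$ and all $\varphi\in\Upsilon$ by the two interpolation procedures of Theorems~\ref{th-C-interp-Sobolev} and~\ref{th-C-interp-Sobolev-got}; the constants will stay independent of $\lambda$ because the complex interpolation functor is exact of type $\theta$ and the quadratic one is exact. For $\mathrm{(i)}$, I would fix some $\widetilde{s}>m+1/2$. By the classical theory of parameter-elliptic boundary-value problems (see, e.g., \cite[Chapter~II]{AgranovichVishik64}, \cite{Agranovich97}, \cite[Subsection~9.1.1]{Roitberg96}) there is a number $\lambda_{0}>0$ such that operator \eqref{4.27} is an isomorphism of $H^{\widetilde{s}}_{2}(\Omega)$ onto $\Xi^{\widetilde{s}-2q}_{2}(\Omega,\Gamma)$ whenever $\lambda\in K$ and $|\lambda|\geq\lambda_{0}$. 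Fix such a $\lambda$: the problem \eqref{4.24}, \eqref{4.25} is then (ordinarily) elliptic, so Theorem~\ref{th-Fredholm} applies to it; since for $p=2$, $\varphi(\cdot)\equiv1$, $s=\widetilde{s}$ operator \eqref{4.27} has zero kernel and is onto, its kernel $N$ and the complementary space $M$ from \eqref{M+range} are trivial, and --- being \emph{independent of} $s$, $\varphi$, $p$ --- they are trivial for all admissible $s$, $\varphi$, $p$. Hence \eqref{4.27} is a continuous linear bijection between Banach spaces, so by the open mapping theorem it is an isomorphism; this proves $\mathrm{(i)}$ with this $\lambda_{0}$.

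For $\mathrm{(ii)}$, write $\varrho:=|\lambda|\geq1$ and equip each $H^{\sigma,\varphi}_{p}(G)$ ($G=\Omega,\mathbb{R}^{n},\mathbb{R}^{n}_{+}$) with the parameter-dependent norm $\|u,H^{\sigma,\varphi}_{p}(G)\|+\varrho^{\sigma}\varphi(\varrho)\,\|u,L_{p}(G)\|$, each $B^{\sigma,\varphi}_{p}(W)$ ($W=\Gamma,\mathbb{R}^{n-1}$) with $\|v,B^{\sigma,\varphi}_{p}(W)\|+\varrho^{\sigma}\varphi(\varrho)\,\|v,B^{0}_{p}(W)\|$, and denote by $\Xi^{s-2q,\varphi}_{p}(\Omega,\Gamma;\varrho)$ the space $\Xi^{s-2q,\varphi}_{p}(\Omega,\Gamma)$ carrying the resulting norm. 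Then \eqref{4.28} is precisely the assertion that
\begin{equation}\label{plan-equiv-norms}
\|u,H^{s,\varphi}_{p}(\Omega)\|+\varrho^{s}\varphi(\varrho)\|u,L_{p}(\Omega)\|\asymp \bigl\|(A(\lambda),B(\lambda))u,\Xi^{s-2q,\varphi}_{p}(\Omega,\Gamma;\varrho)\bigr\|
\end{equation}
uniformly in $u\in H^{s,\varphi}_{p}(\Omega)$ and in $\lambda\in K$ with $|\lambda|\geq\max\{\lambda_{0},1\}$. The ``$\gtrsim$'' direction of \eqref{plan-equiv-norms} is the uniform boundedness of \eqref{4.27} in the parameter-dependent norms; since the $r$-th term of $A(\lambda)$, resp.\ $B_{j}(\lambda)$, carries the factor $\lambda^{2q-r}$, resp.\ $\lambda^{m_{j}-r}$, in front of a PDO of order $\leq r$, this reduces --- via the boundedness of those PDOs on the scales and the trace Theorem~\ref{th-trace} --- to elementary inequalities $\varrho^{a}t^{b}\varphi(t)\lesssim t^{a+b}\varphi(t)+\varrho^{a+b}\varphi(\varrho)$ valid for $a\geq0$, $b>0$, $t,\varrho\geq1$ (a consequence of the slow variation of $\varphi$), used together with $s>\max\{2q,m+1/p\}$. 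The ``$\lesssim$'' direction is the corresponding uniform bound for the inverse operator furnished by $\mathrm{(i)}$. Both directions will be obtained by interpolating \eqref{4.27} and its inverse between suitable endpoint spaces.

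The decisive ingredient, which I would establish first, is that the parameter-dependent norms are preserved, with constants \emph{independent of} $\varrho\geq1$, by the quadratic interpolation with the function parameter $\psi$ of \eqref{quadratic-interp-parameter} and by the complex interpolation: e.g.\ $(H^{s-\varepsilon}_{2}(\Omega;\varrho),H^{s+\delta}_{2}(\Omega;\varrho))_{\psi}=H^{s,\varphi}_{2}(\Omega;\varrho)$ and $[H^{s_{0},\varphi_{0}}_{p_{0}}(\Omega;\varrho),H^{s_{1},\varphi_{1}}_{p_{1}}(\Omega;\varrho)]_{\theta}=H^{s,\varphi}_{p}(\Omega;\varrho)$ uniformly in $\varrho$, together with the analogous Besov identities over $\Gamma$, which make the splitting \eqref{target-space-interplation} valid uniformly in $\lambda$. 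For the spaces over $\mathbb{R}^{n}$ and $\mathbb{R}^{n-1}$ this follows from the Fourier (resp.\ Littlewood--Paley) description of the norms and the uniform equivalence $(\varrho^{2}+\langle\xi\rangle^{2})^{\tau/2}\varphi((\varrho^{2}+\langle\xi\rangle^{2})^{1/2})\asymp\langle\xi\rangle^{\tau}\varphi(\langle\xi\rangle)+\varrho^{\tau}\varphi(\varrho)$ for $\varrho\geq1$ (again a consequence of slow variation of $\varphi$), combined with the standard interpolation of weighted $L_{p}$ and $\ell_{p}$ spaces; for the spaces over $\Omega$ and $\Gamma$ it is transferred by the $\lambda$-independent restriction, extension, flattening and sewing operators used in Theorems~\ref{th-C-interp-Sobolev}, \ref{th-localization}, \ref{th-Besov-properties} and~\ref{th-trace}, each of which is bounded on the two norm-components separately and hence, uniformly in $\varrho$, on the parameter-dependent norm. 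Granting this, the endpoint data are supplied as follows: for $\varphi(\cdot)\equiv1$ and any $p\in(1,\infty)$, the uniform two-sided estimate \eqref{plan-equiv-norms} is the classical parameter-elliptic a~priori estimate cited above; for $p=2$ and any $\varphi\in\Upsilon$ it follows from the $\varphi(\cdot)\equiv1$ case by the quadratic-interpolation identity above (cf.\ \cite{MikhailetsMurach14}); and for $p\neq2$, $\varphi\in\Upsilon$ one repeats verbatim the case-analysis of the proof of Theorem~\ref{th-Fredholm} --- the cases $1<p<2$, $\{p>2,\ s>m+1/2\}$, $\{p>2,\ m+1/p<s\leq m+1/2\}$ --- interpolating, for each admissible $\lambda$, operator \eqref{4.27} and the inverse from $\mathrm{(i)}$ between the corresponding endpoint spaces equipped with parameter-dependent norms, the endpoint exponents being chosen (take $p_{1}$ close to $p$ and $s_{1}$ close to $s$) so as to satisfy $s_{j}>\max\{2q,m+1/p_{j}\}$; exactness of the interpolation functors keeps the resulting constants free of $\lambda$, which yields \eqref{plan-equiv-norms}.

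The main obstacle is exactly this family of uniform-in-$\varrho$ interpolation identities for the parameter-dependent norms and, above all, their transfer from $\mathbb{R}^{n}$ and $\mathbb{R}^{n-1}$ to $\Omega$ and $\Gamma$: one must verify that localization, the universal extension operator, and the trace operator $R_{\Gamma}$ together with its right inverse $S_{\Gamma}$ from Theorem~\ref{th-trace} preserve the parameter-dependent norms with $\lambda$-independent constants, so that the decomposition \eqref{target-space-interplation} of the target space holds uniformly in $\lambda$. Once this is in place, everything else is either a citation to the classical (Sobolev) and Hilbert ($p=2$) parameter-elliptic theory or a routine reprise of the interpolation bookkeeping already carried out for Theorems~\ref{th-Fredholm} and~\ref{th-trace}.
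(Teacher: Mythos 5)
Your proposal is correct and follows essentially the same route as the paper's own proof: assertion (i) is reduced to the classical $p=2$, $\varphi(\cdot)\equiv1$ endpoint via the parameter-independence of the kernel and index of \eqref{4.27}, and assertion (ii) rests on precisely the uniform-in-$\varrho$ interpolation identities for the parameter-dependent norms that the paper isolates as Lemma~\ref{lemma4.2} and proves, as you propose, through the weighted $\ell_{p}$ retract structure of the Besov and Sobolev spaces and the slow-variation equivalence $\eta(2^{k}+\varrho)\asymp\eta(2^{k})+\eta(\varrho)$, with the transfer to $\Omega$ and $\Gamma$ by the $\lambda$-independent extension, flattening and sewing operators. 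The endpoint inputs (Roitberg's estimate for $\varphi(\cdot)\equiv1$, quadratic interpolation for $p=2$) and the three-case interpolation bookkeeping mirroring the proof of Theorem~\ref{th-Fredholm} coincide with the paper's argument.
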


Note that $L_{p}(\Omega)=H^{0}_{p}(\Omega)$  with equality of norms. Since the ellipticity of the parameter-dependent problem \eqref{4.24}, \eqref{4.25} in $K$ implies its ellipticity in the angle $-K:=\{-\lambda:\lambda\in K\}$, the conclusions of this theorem is also valid for $\lambda\in-K$ with $|\lambda|\gg1$.

We separately prove assertions (i) and (ii) of Theorem \ref{th4.9}.

\begin{proof}[Proof of assertion \rm(i)\it]
This assertion is known in the case where $p=2$, $s\geq2q$, $s>m+1/2$, and $\varphi(\cdot)\equiv1$ (see, e.g., \cite[Theorem~3.2.1]{Agranovich97}). Thus, the assertion holds true in the general case because the kernel and index of operator \eqref{4.27} are independent of $p\in(1,\infty)$, $s>m+1/p$, and $\varphi\in\Upsilon$.
\end{proof}

To prove assertion (ii), it is useful to introduce some distribution spaces whose norms depend on an additional parameter $\varrho\geq1$. Such norms are present in the two-sided estimate \eqref{4.28} with $\varrho=|\lambda|$.

Given $s>0$, $\varphi\in\Upsilon$, $p\in(1,\infty)$, and $\varrho\geq1$, we let $H^{s,\varphi}_{p}(\Omega,\varrho)$ denote the space $H^{s,\varphi}_{p}(\Omega)$ endowed with the equivalent norm
\begin{equation*}
\|u,H^{s,\varphi}_{p}(\Omega,\varrho)\|:= \|u,H^{s,\varphi}_{p}(\Omega)\|+
\varrho^{s}\varphi(\varrho)\|u,L_{p}(G)\|.
\end{equation*}
Similarly, we let $B^{s,\varphi}_{p}(G,\varrho)$ denote the space $B^{s,\varphi}_{p}(G)$ endowed with the equivalent norm
\begin{equation*}
\|v,B^{s,\varphi}_{p}(G,\varrho)\|:= \|v,B^{s,\varphi}_{p}(G)\|+
\varrho^{s}\varphi(\varrho)\|v,B^{0}_{p}(G)\|.
\end{equation*}
Here, $\Omega$ satisfies \eqref{Omega-assumption}, and $G=\mathbb{R}^{n}$, with $1\leq n\in\mathbb{Z}$, or $G=\Gamma$.

We deduce assertion~(ii) of Theorem~\ref{th4.9} by interpolation and rely on the following result.

\begin{lemma}\label{lemma4.2}
Suppose that the hypotheses of Theorem~$\ref{th-C-interp-Sobolev}$ are satisfied and that $s_{0}>0$ and $s_1>0$. Then there exists a number $c\geq1$ such that
\begin{equation}\label{I-3}
c^{-1}\|u,H^{s,\varphi}_{p}(\Omega,\varrho)\|\leq
\|u,[H^{s_{0},\varphi_{0}}_{p_0}(\Omega,\varrho), H^{s_{1},\varphi_{1}}_{p_1}(\Omega,\varrho)]_{\theta}\|\leq c\,\|u,H^{s,\varphi}_{p}(\Omega,\varrho)\|
\end{equation}
and
\begin{equation}\label{I-4}
c^{-1}\|v,B^{s,\varphi}_{p}(G,\varrho)\|\leq
\|v,[B^{s_{0},\varphi_{0}}_{p_0}(G,\varrho), B^{s_{1},\varphi_{1}}_{p_{1}}(G,\varrho)]_{\theta}\|\leq c\,\|v,B^{s,\varphi}_{p}(G,\varrho)\|
\end{equation}
for all $u\in H^{s,\varphi}_{p}(\Omega,\varrho)$, $v\in B^{s,\varphi}_{p}(G,\varrho)$, and $\varrho\geq1$. The number $c$ does not depend on $u$, $v$, and $\varrho$.
\end{lemma}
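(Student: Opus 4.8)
The plan is to derive Lemma~\ref{lemma4.2} from the interpolation Theorems~\ref{th-C-interp-Sobolev} and~\ref{th-C-interp-Besov} by realizing the parameter-dependent spaces as uniform-in-$\varrho$ retracts of ordinary spaces of generalized smoothness. Since \eqref{I-3} and \eqref{I-4} are proved by the same scheme — with Theorem~\ref{th-C-interp-Sobolev} replaced by Theorem~\ref{th-C-interp-Besov}, with Rychkov's extension operator replaced by the flattening and sewing operators from the proofs of Theorems~\ref{th-localization} and~\ref{th-C-interp-Besov}, and, in the $G=\Gamma$ case, after reducing to $G=\mathbb{R}^{d}$ by those operators (which are bounded on the $B^{0}_{p}$-norms uniformly in $\varrho$) — I describe the argument for \eqref{I-3}. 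First I would reduce to $\Omega=\mathbb{R}^{n}$: the restriction operator $R_{\Omega}$ and Rychkov's universal extension operator $T$ are bounded on each $H^{\sigma,\omega}_{r}(\cdot)$ and on each $L_{r}(\cdot)$ with norms not depending on $\varrho$, hence they are bounded on the couples $H^{\sigma,\omega}_{r}(\mathbb{R}^{n},\varrho)\leftrightarrow H^{\sigma,\omega}_{r}(\Omega,\varrho)$ with $\varrho$-independent norms and satisfy $R_{\Omega}T=\mathrm{id}$; thus $H^{\sigma,\omega}_{r}(\Omega,\varrho)$ is a retract of $H^{\sigma,\omega}_{r}(\mathbb{R}^{n},\varrho)$ uniformly in $\varrho$, compatibly with the interpolation couple, and it is enough to prove \eqref{I-3} for $\Omega=\mathbb{R}^{n}$. (The hypothesis $s_{0},s_{1}>0$ guarantees, via $H^{s_{j},\varphi_{j}}_{p_{j}}\hookrightarrow L_{p_{j}}$, that the parameter-dependent norms are genuine equivalent norms on the same underlying spaces.) I would also record the easy half of \eqref{I-3}: since $H^{s_{j},\varphi_{j}}_{p_{j}}(\mathbb{R}^{n},\varrho)$ is, as a normed space, the intersection of $H^{s_{j},\varphi_{j}}_{p_{j}}(\mathbb{R}^{n})$ with $L_{p_{j}}(\mathbb{R}^{n})$ carrying the scaled norm $\varrho^{s_{j}}\varphi_{j}(\varrho)\|\cdot,L_{p_{j}}(\mathbb{R}^{n})\|$, and since complex interpolation scales isometrically on the latter factor and respects intersections up to a universal constant, Theorem~\ref{th-C-interp-Sobolev} (whose equivalence constant is $\varrho$-free) together with $\varrho^{(1-\theta)s_{0}+\theta s_{1}}\varphi_{0}^{1-\theta}(\varrho)\varphi_{1}^{\theta}(\varrho)=\varrho^{s}\varphi(\varrho)$ gives the left-hand inequality in \eqref{I-3} with a $\varrho$-independent constant.

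For the right-hand inequality (and, in fact, for both inequalities at once) I would use the Agmon--Agranovich--Vishik device of adjoining one variable (cf.\ \cite[Chapter~II]{AgranovichVishik64}). Fix $\zeta\in C^{\infty}_{0}(\mathbb{R})$ with $\int_{\mathbb{R}}\zeta\neq0$ and, for $\varrho\geq1$, consider the coretraction $\mathcal{E}_{\varrho}:u(x)\mapsto u(x)\,e^{i\varrho t}\zeta(t)$ and a fixed retraction $\mathcal{P}_{\varrho}$ recovering $u$ from $\mathcal{E}_{\varrho}u$ (multiply by $e^{-i\varrho t}$, apply a smooth Fourier cut-off near the frequency $\varrho$ in the $t$-variable, and integrate in $t$), so that $\mathcal{P}_{\varrho}\mathcal{E}_{\varrho}=\mathrm{id}$. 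The crux of the proof is that $\mathcal{E}_{\varrho}$ and $\mathcal{P}_{\varrho}$ are bounded operators $H^{\sigma,\omega}_{r}(\mathbb{R}^{n},\varrho)\to H^{\sigma,\omega}_{r}(\mathbb{R}^{n+1})$ and $H^{\sigma,\omega}_{r}(\mathbb{R}^{n+1})\to H^{\sigma,\omega}_{r}(\mathbb{R}^{n},\varrho)$ with norms bounded uniformly in $\varrho$ for all $\sigma>0$, $\omega\in\Upsilon$, $1<r<\infty$; granting this, $H^{\sigma,\omega}_{r}(\mathbb{R}^{n},\varrho)$ is a retract of $H^{\sigma,\omega}_{r}(\mathbb{R}^{n+1})$ uniformly in $\varrho$ and compatibly with the interpolation couple, and \eqref{I-3} for $\Omega=\mathbb{R}^{n}$ follows by applying Theorem~\ref{th-C-interp-Sobolev} in the $\mathbb{R}^{n+1}$ case (equivalence constant $\varrho$-free) and transferring through $\mathcal{E}_{\varrho}$, $\mathcal{P}_{\varrho}$.

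The uniform boundedness of $\mathcal{E}_{\varrho}$ and $\mathcal{P}_{\varrho}$ is the \emph{main obstacle} and reduces to a Fourier-multiplier estimate. On the frequency set selected by the cut-off one has, with $\langle(\xi,\varrho)\rangle:=(1+|\xi|^{2}+\varrho^{2})^{1/2}$, the two-sided estimate $\langle(\xi',\xi_{n+1})\rangle^{\sigma}\omega(\langle(\xi',\xi_{n+1})\rangle)\asymp\langle(\xi',\varrho)\rangle^{\sigma}\omega(\langle(\xi',\varrho)\rangle)\asymp\langle\xi'\rangle^{\sigma}\omega(\langle\xi'\rangle)+\varrho^{\sigma}\omega(\varrho)$, uniformly in $\varrho\geq1$ and $\xi'\in\mathbb{R}^{n}$: the first equivalence holds because $\omega$ varies slowly (with constants uniform on $[1,\infty)$, by condition~(ii)), and the second because $t^{\sigma}\omega(t)$ is equivalent on $[1,\infty)$ to an increasing function when $\sigma>0$. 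The complementary, rapidly decaying part of $\widehat{\zeta}$ contributes only a harmless perturbation, since $\langle(\xi,\varrho)\rangle^{\sigma}\omega(\langle(\xi,\varrho)\rangle)$ grows at most polynomially. Finally, the multipliers arising in $\mathcal{E}_{\varrho}$, $\mathcal{P}_{\varrho}$ satisfy Mikhlin-type derivative bounds with constants independent of $\varrho$, because $\varrho$ enters only through $\langle(\xi,\varrho)\rangle$, which behaves like $\langle\xi\rangle$ under differentiation, and because $\omega$ obeys the derivative estimates in condition~(ii) of the definition of $\Upsilon$. An equivalent route, avoiding the extra variable, is to identify $\|u,H^{\sigma,\omega}_{r}(\mathbb{R}^{n},\varrho)\|$ with $\|\mathcal{F}^{-1}[\langle(\xi,\varrho)\rangle^{\sigma}\omega(\langle(\xi,\varrho)\rangle)\,\mathcal{F}u],L_{r}(\mathbb{R}^{n})\|$ and to invoke Triebel's complex interpolation theorem \cite[Theorem~4.2/2]{Triebel77III} for the weights $\langle(\xi,\varrho)\rangle^{s_{j}}\varphi_{j}(\langle(\xi,\varrho)\rangle)$, which form a bounded family in Triebel's admissible weight class as $\varrho$ ranges over $[1,\infty)$; here one must be careful to extract the $\varrho$-independence of the constants from the proof of that theorem. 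Assembling the reduction step with the two inequalities proves \eqref{I-3}, and the same argument, mutatis mutandis, proves \eqref{I-4}.
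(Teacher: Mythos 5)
Your proposal is essentially correct but follows a genuinely different route from the paper. You realize $H^{\sigma,\omega}_{r}(\mathbb{R}^{n},\varrho)$ as a uniform-in-$\varrho$ retract of the $\varrho$-free space $H^{\sigma,\omega}_{r}(\mathbb{R}^{n+1})$ via the Agmon--Agranovich--Vishik device $u\mapsto u(x)e^{i\varrho t}\zeta(t)$ (or, in your alternative formulation, you identify the parameter-dependent norm with $\|\mathcal{F}^{-1}[\langle(\xi,\varrho)\rangle^{\sigma}\omega(\langle(\xi,\varrho)\rangle)\mathcal{F}u],L_{r}\|$ uniformly in $\varrho$), and then invoke Theorems~\ref{th-C-interp-Sobolev} and~\ref{th-C-interp-Besov} once, in the $\varrho$-free setting. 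The paper instead never leaves $\mathbb{R}^{n}$ (resp.\ $\mathbb{R}^{d}$): it uses the Cobos--Fern\'andez retraction of $B^{s,\varphi}_{p}(\mathbb{R}^{n})$ onto $L_{p}(\mathbb{R}^{n},l^{s,\varphi}_{p})$ (and of $H^{s,\varphi}_{p}$ onto $L_{p}(\mathbb{R}^{n},l^{s,\varphi}_{2})$), observes that the parameter-dependent norm corresponds exactly to the sequence-space weights $\varkappa_{k}(s,\varphi,p,\varrho)=((2^{sk}\varphi(2^{k}))^{p}+(\varrho^{s}\varphi(\varrho))^{p})^{1/p}$, computes the complex interpolation of the weighted $l_{p}$ and $L_{p}(\cdot)$ spaces with \emph{equality} of norms by Triebel's results, and reduces the whole uniformity question to the scalar equivalence $\varkappa_{k}^{1-\theta}(s_{0},\ldots)\varkappa_{k}^{\theta}(s_{1},\ldots)\asymp\varkappa_{k}(s,\ldots)$, which follows from $\eta(2^{k}+\varrho)\asymp\eta(2^{k})+\eta(\varrho)$ for $\eta(t)=t^{\sigma p}\omega^{p}(t)$ with $\sigma>0$ — the same regular-variation fact you use, but deployed on scalars rather than on symbols. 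The trade-off: the paper's route makes the $\varrho$-independence of constants essentially automatic, at the cost of the sequence-space bookkeeping; your route is conceptually closer to the classical parameter-elliptic literature but shifts the entire burden onto the uniform boundedness of $\mathcal{E}_{\varrho}$, $\mathcal{P}_{\varrho}$ (equivalently, a Mikhlin-type estimate, uniform in $\varrho$, for the multipliers $\langle(\xi,\varrho)\rangle^{\sigma}\omega(\langle(\xi,\varrho)\rangle)/(\langle\xi\rangle^{\sigma}\omega(\langle\xi\rangle)+\varrho^{\sigma}\omega(\varrho))$ and its reciprocal). That step is true and your sketch names the right ingredients (condition (ii) of $\Upsilon$ for the derivative bounds, slow variation plus $\sigma>0$ for the size bounds, rapid decay of $\widehat{\zeta}$ against polynomial growth for the off-diagonal tail), but it is the real content of your proof and is only outlined; it is not available in the cited literature for $\varphi\not\equiv1$ and $p\neq2$, so a complete write-up would have to carry it out. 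Notably, the paper itself remarks that \eqref{I-3} could alternatively be proved by adapting \cite[Section~1.1]{GrubbKokholm93}, which is precisely your second route; your reduction steps (Rychkov's operator for $\Omega$, the flattening/sewing operators for $\Gamma$, and the one-sided interpolation-of-intersections inequality for the easy half) are all sound and match the paper's treatment of those cases.
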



\begin{proof}
We will first prove \eqref{I-4} in the case  where $G=\mathbb{R}^n$. As was mentioned in the proof of Theorem~\ref{th-C-interp-Besov}, the space $B^{s,\varphi}_{p}(\mathbb{R}^n)$ is a retract of the Banach space $l^{s,\varphi}_{p}(L_p(\mathbb{R}^n))=L_p(\mathbb{R}^n,l^{s,\varphi}_{p})$
whatever $s\in\mathbb{R}$, $\varphi\in\Upsilon$, and $1<p<\infty$. Here,  $l^{s,\varphi}_{p}$ is defined by \eqref{space-l(E)} where $E:=\mathbb{C}$, with $l_{p}=l^{s,\varphi}_{p}$ in the case when $s=0$ and $\varphi(\cdot)\equiv1$. As usual, $L_p(\mathbb{R}^n, E)$, where $E$ is a Banach space, denotes the Banach space of all Bochner measurable functions $f:\mathbb{R}^n\rightarrow E$ (with respect to the Lebesgue measure on $\mathbb{R}^n$) such that
\begin{equation*}
\|f,L_p(\mathbb{R}^n,E)\|^{p}:=\int\limits_{\mathbb{R}^n}
\|f(x),E\|^{p}dx<\infty.
\end{equation*}
The corresponding retraction
\begin{equation}\label{II-3}
\mathcal{R}:L_p(\mathbb{R}^n,l^{s,\varphi}_{p})\rightarrow B^{s,\varphi}_{p}(\mathbb{R}^n)
\end{equation}
and coretraction
\begin{equation}\label{II-4}
\mathcal{T}:B^{s,\varphi}_{p}(\mathbb{R}^n)\rightarrow L_p(\mathbb{R}^n,l^{s,\varphi}_{p})
\end{equation}
are independent of $s,\varphi$, and $p$.

Let us evaluate the norms of $\mathcal{R}$ and $\mathcal{T}$ via the norm in  $B^{s,\varphi}_{p}(\mathbb{R}^n,\varrho)$ with $\varrho\geq1$. Given $f=(f_{k})^{\infty}_{k=0}\in L_p(\mathbb{R}^n,l^{s,\varphi}_{p})$, we have
\begin{align}\label{III-1}
\begin{split}
\|\mathcal{R}f, B^{s,\varphi}_{p}(\mathbb{R}^n,\varrho)\| & = \|\mathcal{R}f, B^{s,\varphi}_{p}(\mathbb{R}^n)\|+ \varrho^{s}\varphi(\varrho)\|\mathcal{R}f, B^{0}_{p}(\mathbb{R}^n)\| \\
& \leq
c_1(\|f,L_p(\mathbb{R}^n,l^{s,\varphi}_{p})\|+ \varrho^{s}\varphi(\varrho)\|f,L_p(\mathbb{R}^n,l_{p})\|), \\
\end{split}
\end{align}
where $c_1$ is the maximum of the norms of operator \eqref{II-3} and this operator for $s=0$ and $\varphi(\cdot)=1$. Here,
\begin{align}\label{III-2}
\begin{split}
&\|f,L_p(\mathbb{R}^n,l^{s,\varphi}_{p})\|^{p}+
(\varrho^{s}\varphi(\varrho)\|f,L_p(\mathbb{R}^n,l_{p}\|)^{p}\\
= & \int\limits_{\mathbb{R}^n}\|f(x),l^{s,\varphi}_{p}\|^{p}dx + (\varrho^{s}\varphi(\varrho))^{p}
\int\limits_{\mathbb{R}^n}\|f(x),l_{p}\|^{p}dx \\
= &
\int\limits_{\mathbb{R}^n}\sum^{\infty}_{k=0}
(2^{sk}\varphi(2^{k})|f_{k}(x)|)^{p}dx + (\rho^{s}\varphi(\varrho))^{p}\int\limits_{\mathbb{R}^n}\sum^{\infty}_{k=0}
|f_{k}(x)|^{p}dx \\
= & \int\limits_{\mathbb{R}^n}
\sum^{\infty}_{k=0}\varkappa^{p}_{k}(s,\varphi,p,\varrho)|f_{k}(x)|^{p}dx =  \|f,L_p(\mathbb{R}^n,l^{s,\varphi}_{p}(\varrho))\|^{p}, \\
\end{split}
\end{align}
where
\begin{equation*}
\varkappa_{k}(s,\varphi,p,\varrho):=
((2^{sk}\varphi(2^{k}))^{p}+(\varrho^{s}\varphi(\varrho))^{p})^{1/p}
\end{equation*}
and $l^{s,\varphi}_{p}(\varrho)$ denotes the space $l^{s,\varphi}_{p}$ endowed with the equivalent norm
\begin{equation*}
\|(\alpha_{k})^{\infty}_{k=0},l^{s,\varphi}_{p}(\varrho)\|:=
\left(\sum^{\infty}_{k=0}
\varkappa^{p}_{k}(s,\varphi,p,\varrho)|\alpha_{k}|^{p}\right)^{1/p}.
\end{equation*}
According to \eqref{III-1} and \eqref{III-2}, we get
\begin{equation}\label{IV-1}
\|\mathcal{R}f,B^{s,\varphi}_{p}(\mathbb{R}^n,\varrho)\|\leq c_{1}2^{1-1/p}\,\|f,L_{p}(\mathbb{R}^n,l^{s,\varphi}_{p}(\varrho))\|.
\end{equation}

Furthermore, given $w\in B^{s,\varphi}_{p}(\mathbb{R}^n)$, we infer by \eqref{III-2} that
\begin{align*}
\|\mathcal{T}w,L_{p}(\mathbb{R}^n,l^{s,\varphi}_{p}(\varrho))\|^{p}= & \|\mathcal{T}w,L_{p}(\mathbb{R}^n,l^{s,\varphi}_{p})\|^{p}+
(\varrho^{s}\varphi(\rho)\|\mathcal{T}w,L_{p}(\mathbb{R}^n,l_{p})\|)^{p}\\
& \leq c^{p}_{2}\bigl(\|w,B^{s,\varphi}_{p}(\mathbb{R}^n)\|^{p}+
(\varrho^{s}\varphi(\rho)\|w,B^{0}_{p}(\mathbb{R}^n)\|)^{p}\bigr)\\
&=c^{p}_{2}\,\|w,B^{s,\varphi}_{p}(\mathbb{R}^n,\varrho)\|^{p},
\end{align*}
where $c_{2}$ is the maximum of the norms of operator \eqref{II-4} and this operator for $s=0$ and $\varphi(\cdot)=1$. Hence,
\begin{equation}\label{IV-2}
\|\mathcal{T}w,L_{p}(\mathbb{R}^n,l^{s,\varphi}_{p}(\varrho))\|\leq c_{2}\|w,B^{s,\varphi}_{p}(\mathbb{R}^n,\varrho)\|.
\end{equation}

We conclude by \eqref{IV-1} and \eqref{IV-2} that that the norms of the operators
\begin{equation}\label{IV-3}
\mathcal{R}:L_p(\mathbb{R}^n,l^{s,\varphi}_{p}(\varrho))\rightarrow B^{s,\varphi}_{p}(\mathbb{R}^n,\varrho),
\end{equation}
\begin{equation}\label{IV-4}
\mathcal{R}:L_{p_{j}}(\mathbb{R}^n,l^{s_{j},\varphi_{j}}_{p_{j}}(\varrho))\rightarrow B^{s_{j},\varphi_{j}}_{p_{j}}(\mathbb{R}^n,\varrho),\quad j\in\{0,1\},
\end{equation}
and
\begin{equation}\label{IV-5}
\mathcal{T}:B^{s,\varphi}_{p}(\mathbb{R}^n,\varrho)\rightarrow L_p(\mathbb{R}^n,l^{s,\varphi}_{p}(\varrho))
\end{equation}
\begin{equation}\label{IV-6}
\mathcal{T}:B^{s_{j},\varphi_{j}}_{p_{j}}(\mathbb{R}^n,\varrho)\rightarrow L_{p_{j}}(\mathbb{R}^n,l^{s_{j},\varphi_{j}}_{p_{j}}(\varrho)),
\quad j\in\{0,1\},
\end{equation}
are bounded from above by  a certain number $c$ that does not depend on $\varrho\geq1$.

Interpolating operators \eqref{IV-4} and \eqref{IV-6} and putting
\begin{equation*}
\mathcal{L}_{\theta}(\varrho):=
[L_{p_0}(\mathbb{R}^n,l^{s_{0},\varphi_{0}}_{p_{0}}(\varrho)),
L_{p_1}(\mathbb{R}^n,l^{s_{1},\varphi_{1}}_{p_{1}}(\varrho))]_{\theta}
\end{equation*}
and
\begin{equation*}
\mathcal{B}_{\theta}(\varrho):=
[B^{s_{0},\varphi_{0}}_{p_{0}}(\mathbb{R}^n,\varrho),
B^{s_{1},\varphi_{1}}_{p_{1}}(\mathbb{R}^n,\varrho)]_{\theta},
\end{equation*}
we get two bounded operators
\begin{equation}\label{V-1}
\mathcal{R}:\mathcal{L}_{\theta}(\varrho)\to\mathcal{B}_{\theta}(\varrho) \quad\mbox{and}\quad
\mathcal{T}:\mathcal{B}_{\theta}(\varrho)\to\mathcal{L}_{\theta}(\varrho),
\end{equation}
whose norms does not exceed $c$. According to \cite[Theorem 1.18.4 and Remark 1.18.4/2]{Triebel95},
\begin{equation}\label{V-2}
\mathcal{L}_{\theta}(\varrho)=
L_{p}(\mathbb{R}^n,[l^{s_{0},\varphi_{0}}_{p_{0}}(\varrho),
l^{s_{1},\varphi_{1}}_{p_{1}}(\varrho)]_{\theta})
\end{equation}
with equality of norms. Let us describe the last interpolation space.

Observe that
\begin{equation}\label{V-3}
l^{s,\varphi}_{p}(\varrho)= l_{p}((\varkappa_{k}(s,\varphi,p,\varrho)\mathbb{C})^{\infty}_{k=0}).
\end{equation}
Here, given a sequence of Banach spaces $(E_{k})^{\infty}_{k=0}$, we let $ l_{p}((E_{k})^{\infty}_{k=0})$ denote the linear space of all sequences $\alpha=(\alpha_{k})^{\infty}_{k=0}$ such that each $\alpha_{k}\in E_{k}$ and
\begin{equation*}
\|\alpha,l_{p}((E_{k})^{\infty}_{k=0})\|^{p}:=
\sum^{\infty}_{k=0}\|\alpha_{k},E_{k}\|^{p}<\infty.
\end{equation*}
This space is Banach with respect to the norm $\|\cdot,l_{p}((E_{k})^{\infty}_{k=0})\|$. We consider the case where each $E_{k}=\varkappa_{k}(s,\varphi,p,\varrho)\mathbb{C}$, i.e. $E_{k}$ is the complex plain $\mathbb{C}$ endowed with the proportional norm
\begin{equation*}
\|\beta,\varkappa_{k}(s,\varphi,p,\varrho)\mathbb{C}\|:=
\varkappa_{k}(s,\varphi,p,\varrho)|\beta|,
\end{equation*}
with $\beta\in \mathbb{C}$. Owing to \cite[Theorem 1.18.1 and Remark 1.18.1/1]{Triebel95},
\begin{align}\label{VI-1}
\begin{split}
[l^{s_0,\varphi_0}_{p_0}(\varrho),
l^{s_1,\varphi_1}_{p_1}(\varrho)]_{\theta}
=& [l_{p_0}((\varkappa_{k}(s_{0},\varphi_{0},p_{0},\varrho)
\mathbb{C})^{\infty}_{k=0}),
l_{p_1}((\varkappa_{k}(s_{1},\varphi_{1},p_{1},\varrho)
\mathbb{C})^{\infty}_{k=0})]_{\theta}\\
= & l_{p}(([\varkappa_{k}(s_{0},\varphi_{0},p_{0},\varrho)\mathbb{C},
\varkappa_{k}(s_{1},\varphi_{1},p_{1},\varrho)
\mathbb{C}]_{\theta})_{k=0}^{\infty})\\
= & l_{p}((\varkappa^{1-\theta}_{k}(s_{0},\varphi_{0},p_{0},\varrho)
\varkappa^{\theta}_{k}(s_{1},\varphi_{1},p_{1},\varrho)
\mathbb{C})_{k=0}^{\infty})
\end{split}
\end{align}
because the functor $[\cdot,\cdot]_{\theta}$ is exact of type $\theta$ (i.e. it satisfies \eqref{estimate-norms-C-interp}; see Appendix~\ref{appendix}).

According to \cite[Lemma~4.2]{MikhailetsMurach14},
\begin{align}\label{VII-1}
\begin{split}
& \varkappa^{1-\theta}_{k}(s_{0},\varphi_{0},p_{0},\varrho)
\varkappa^{\theta}_{k}(s_{1},\varphi_{1},p_{1},\varrho)\\
&=  ((2^{k})^{s_{0}p_{0}}\varphi^{p_{0}}_{0}(2^{k})+ \varrho^{s_{0}p_{0}}\varphi^{p_{0}}_{0}(\varrho))^{(1-\theta)/p_{0}}\,
((2^{k})^{s_{1}p_{1}}\varphi^{p_{1}}_{1}(2^{k})+ \varrho^{s_{1}p_{1}}\varphi^{p_{1}}_{1}(\varrho))^{\theta/p_{1}}\\
&\asymp ((2^{k}+\varrho)^{s_{0}p_{0}}
\varphi^{p_{0}}_{0}(2^{k}+\varrho))^{(1-\theta)/p_{0}}\,
((2^{k}+\varrho)^{s_{1}p_{1}}
\varphi^{p_{1}}_{1}(2^{k}+\varrho))^{\theta/p_{1}}\\
&=  (2^{k}+\varrho)^{s}\varphi(2^{k}+\varrho)= ((2^{k}+\varrho)^{sp}\varphi^{p}(2^{k}+\varrho))^{1/p}\\
& \asymp ((2^{k})^{sp}\varphi^{p}(2^{k})+\varrho^{sp}\varphi^{p}(\varrho))^{1/p}
=\varkappa_{k}(s,\varphi,p,\varrho),
\end{split}
\end{align}
the weak equivalence $\asymp$ of positive values being with respect to $k$ and $\varrho$. Namely, we write $h_1(k,\varrho)\asymp h_2(k,\varrho)$ for positive values $h_1$ and $h_2$ depending on $k\in\{0,1,2,\ldots\}$ and $\varrho\geq1$ iff there exist positive numbers $c'$ and $c''$ such that $c'\leq h_1(k,\varrho)/h_2(k,\varrho)\leq c''$ for arbitrary $k$ and $\varrho$ indicated. Explaining the last weak equivalence in \eqref{VII-1}, we note that the function $\eta(t):=t^{sp}\varphi^{p}(t)$ of $t\geq1$ satisfies $\eta(2^{k}+\varrho)\asymp\eta(2^{k})+\eta(\varrho)$ by \cite[Lemma~4.2]{MikhailetsMurach14} (because $sp>0$ and $\varphi^{p}$ varies slowly at infinity), which yields the required equivalence. The first weak equivalence is justified in the same way.

Formulas \eqref{VI-1} and \eqref{VII-1} imply the following:
\begin{equation}\label{VII-2}
[l^{s_0,\varphi_0}_{p_0}(\varrho),
l^{s_1,\varphi_1}_{p_1}(\varrho)]_{\theta}=l^{s,\varphi}_{p}(\varrho),
\end{equation}
and there exists a number $c_{0}\geq1$ such that
\begin{equation*}\label{VII-3}
c^{-1}_{0}\|\alpha,l^{s,\varphi}_{p}(\varrho)\|\leq
\|\alpha,[l^{s_0,\varphi_0}_{p_0}(\varrho),
l^{s_1,\varphi_1}_{p_1}(\varrho)]_{\theta}\|\leq c_{0}\|\alpha,l^{s,\varphi}_{p}(\varrho)\|
\end{equation*}
for all $\alpha\in l^{s,\varphi}_{p}$ and $\varrho\geq1$. We hence infer by \eqref{V-2} that
\begin{equation}\label{VII-4}
\mathcal{L}_{\theta}(\varrho)=
[L_{p_0}(\mathbb{R}^{n},l^{s_0,\varphi_0}_{p_0}(\varrho)), L_{p_1}(\mathbb{R}^{n},l^{s_1,\varphi_1}_{p_1}(\varrho))]_{\theta}
\end{equation}
and
\begin{equation}\label{VIII-1}
c^{-1}_{0}\|f,\mathcal{L}_{\theta}(\varrho)\|\leq \|f,L_{p}(\mathbb{R}^{n},l^{s,\varphi}_{p}(\varrho))\|\leq
c\|f,\mathcal{L}_{\theta}(\varrho)\|,
\end{equation}
for every  $f\in\mathcal{L}_{\theta}(\varrho)$ and $\varrho\geq1$, where $c_0$ is independent of $f$ and $\varrho$.

Now for every $w\in B^{s,\varphi}_{p}(\mathbb{R}^{n})$ we get
\begin{align*}
\|w, B^{s,\varphi}_{p}(\mathbb{R}^{n},\varrho)\|&= \|\mathcal{R}\mathcal{T}w, B^{s,\varphi}_{p}(\mathbb{R}^{n},\varrho)\|
\leq  c\|\mathcal{T}w,L_{p}(\mathbb{R}^{n},l^{s,\varphi}_{p}(\varrho))\|\\
&\leq cc_{0}\|\mathcal{T}w,\mathcal{L}_{\theta}(\varrho)\|
\leq c^{2}c_{0}\|w,B_{\theta}(\varrho)\|
\end{align*}
due to \eqref{IV-3}, \eqref{VIII-1}, and \eqref{V-1}. Moreover,
\begin{align*}
\|w, B_{\theta}(\varrho)\|&=
\|\mathcal{R}\mathcal{T}w, B_{\theta}(\varrho)\|
\leq c\|\mathcal{T}w,L_{\theta}(\varrho)\|\\
&\leq cc_{0}\|\mathcal{T}w,L_{p}(\mathbb{R}^{n},l^{s,\varphi}_{p}\varrho)\|
\leq c^{2}c_{0}\|w,B^{s,\varphi}_{p}(\mathbb{R}^{n},\varrho)\|
\end{align*}
due to \eqref{V-1}, \eqref{VIII-1}, and \eqref{IV-5}. This means the required property \eqref{I-4} in the $G=\mathbb{R}^{n}$ case.

Property \eqref{I-3} in this case is proved by the same reasoning provided we rely on the fact that $H^{s,\varphi}_{p}(\mathbb{R}^n)$ is a retract of $L_p(\mathbb{R}^n,l^{s,\varphi}_{2})$ \cite[Theorem~2.5]{CobosFernandez88} and hence use only $l^{s,\varphi}_{2}$ spaces instead of $l^{s,\varphi}_{p}$ and  $l^{s,\varphi}_{p_j}$. (Other way to prove \eqref{I-3} is to adapt the consideration given in  \cite[Section~1.1]{GrubbKokholm93}.)

The other cases for $\Omega$ and $G$ are deduced from the case just considered by the same reasoning as that used in \cite[Proof of Lemma~4.1]{MikhailetsMurach14} for quadratic interpolation between
parameter-dependent Sobolev spaces with the integrability exponent $p=2$.
\end{proof}

Now we may complete the proof of Theorem~\ref{th4.9}.

\begin{proof}[Proof of assertion \rm(ii)\it]
If $\varphi(\cdot)\equiv1$, then this assertion is contained in Roitberg's result \cite[Theorem 9.1.1]{Roitberg96} (the $p=2$ case is studied by Agranovich and Vishik \cite[Theorem~6.1]{AgranovichVishik64} and
\cite[Theorem~3.2.1]{Agranovich97}; as to pseudodifferential parameter-elliptic problems, see Grubb and Kokholm's article \cite[Corollary~4.7]{GrubbKokholm93} and also \cite[Theorem~1.9]{Grubb95}). This result concerns the operator
\begin{equation*}
(A(\lambda),B(\lambda)):\widetilde{H}^{s,p,(r)}(\Omega,|\lambda|)\to
\widetilde{H}^{s-2q,p,(r-2q)}(\Omega,|\lambda|)\times
\prod_{j=1}^{q}B^{s-m_{j}-1/p}_{p}(\Gamma,|\lambda|)
\end{equation*}
where $s\in\mathbb{R}$, $p\in(1,\infty)$, and $r:=\max\{2q,m+1\}$. Here,
$\widetilde{H}^{\sigma,p,(\mu)}(\Omega,\varrho)$ stands for the  Roitberg--Sobolev space with the smoothness index $\sigma\in\mathbb{R}$, integrability exponent $p\in(1,\infty)$, specific integer-valued index $\mu\geq0$, the norm in this space depending on the parameter $\varrho>0$. If $\sigma\geq\max\{\mu-1+1/p,0\}$, then $\widetilde{H}^{\sigma,p,(\mu)}(\Omega,\varrho)=
H^{\sigma}_{p}(\Omega,\varrho)$ with the equivalence of norms in which constants do not depend on $\varrho\geq1$ \cite[Section~9.1.2, p.~298]{Roitberg96}. Roitberg \cite[Section~1.13]{Roitberg96} defines parameter-dependent norms in $H^{\sigma}_{p}(\Omega)$ and $B^{\sigma}_{p}(\Gamma)$ in a different way then that we use; however, these norms are equivalent to ours with constants not depending on the parameter provided that $\sigma>0$ (see  Remark~\ref{Roitberg-parameter-spaces} given below). Thus, the above-mentioned result \cite[Theorem 9.1.1]{Roitberg96} contains assertion (ii) in the  $\varphi(\cdot)\equiv1$ case because $s>\max\{2q,m+1/p\}$ by the hypothesis.

If $p=2$, then assertion (ii) for every $\varphi\in\Upsilon$ is deduced  from the $\varphi(\cdot)\equiv1$ case by the quadratic interpolation on the base of \cite[Lemma~4.1]{MikhailetsMurach14} (which is a version of Lemma~\ref{lemma4.2} for this interpolation) in the same way as  \cite[Proof of Theorem~4.9]{MikhailetsMurach14} (which deals with the case where $m\leq2q-1$).

If $p\neq2$, then assertion (ii) for every $\varphi\in\Upsilon$ is deduced on the base of Lemma~\ref{lemma4.2}. Consider first the case where $p\neq2$ and $2q>m+1/p$. Let us choose a number $p_{1}$ such that $1<p_{1}<p<2$ or $2<p<p_{1}<\infty$ and define parameters $\theta\in(0,1)$ and $\varphi_{0}\in\Upsilon$ according to Theorem~\ref{th-C-interp-Sobolev-got}. If $1<p_{1}<p<2$, we choose a number $s_1$ such that $2q<s_1<s$ and then define a number $s_0>s$ by the formula $s=(1-\theta)s_{0}+\theta s_{1}$. If  $2<p<p_{1}$, we choose a number $s_0$ such that $2q<s_0<s$ and then define a number $s_1>s$ by the same formula. By assertion~(i), we have the isomorphisms
\begin{equation}\label{isomorphism-Sobolev-classic}
(A(\lambda),B(\lambda)):H^{s_0,\varphi_0}_{2}(\Omega,|\lambda|)
\leftrightarrow\Xi^{s_0-2q,\varphi_0}_{2}(\Omega,\Gamma,|\lambda|)
\end{equation}
and
\begin{equation}\label{isomorphism-Sobolev-generalized}
(A(\lambda),B(\lambda)):H^{s_1}_{p_{1}}(\Omega,|\lambda|)\leftrightarrow
\Xi^{s_1-2q}_{p_{1}}(\Omega,\Gamma,|\lambda|)
\end{equation}
for every $\lambda\in K$ with $|\lambda|\geq\max\{\lambda_{0},1\}$. Here, \begin{equation*}
\Xi^{s-2q,\varphi}_{p}(\Omega,\Gamma,\varrho):=
H^{s-2q,\varphi}_{p}(\Omega,\varrho)\times
\prod_{j=1}^{q}B^{s-m_{j}-1/p,\varphi}_{p}(\Gamma,\varrho)
\end{equation*}
whenever $p\in(1,\infty)$, $s>\max\{2q,m+1/p\}$, $\varphi\in\Upsilon$, and $\varrho\geq1$. As we have indicated in the proof, these isomorphisms satisfy \eqref{4.28}, which means that the norms of operators \eqref{isomorphism-Sobolev-classic} and \eqref{isomorphism-Sobolev-generalized} and the norms of their inverses are bounded from above by a number $c>0$ that does not depend on $\lambda$. One of isomorphisms \eqref{isomorphism-Sobolev-classic} and \eqref{isomorphism-Sobolev-generalized} is an extension by the other due to our choice of $s_0$ and $s_1$. Hence, interpolating between \eqref{isomorphism-Sobolev-classic} and \eqref{isomorphism-Sobolev-generalized}, we obtain another isomorphism
\begin{equation}\label{isomorphism-got}
(A(\lambda),B(\lambda)):\bigl[H^{s,\varphi_0}_{2}(\Omega,|\lambda|),
H^{s}_{p_{1}}(\Omega,|\lambda|)\bigr]_{\theta}\leftrightarrow
\bigl[\Xi^{s_0-2q,\varphi_0}_{2}(\Omega,\Gamma,|\lambda|),
\Xi^{s_1-2q}_{p_{1}}(\Omega,\Gamma,|\lambda|)\bigr]_{\theta}
\end{equation}
for every $\lambda\in K$ with $|\lambda|\geq\max\{\lambda_{0},1\}$. Since the interpolation functor $[\cdot,\cdot]_{\theta}$ is exact of type $\theta$, the norms of operator \eqref{isomorphism-got} and its inverse are bounded from above by the same number~$c$. It follows from this by Lemma~\ref{lemma4.2} that the two-sided estimate \eqref{4.28} holds true with constants that do not depend on $\lambda$.

The case where $p\neq2$ and $2q<m+1/p$ is treated in the same way as Theorem~\ref{th-Fredholm} provided that we use Lemma~\ref{lemma4.2} when interpolating between isomorphisms.
\end{proof}

\begin{remark}
The right-hand side of the bilateral estimate \eqref{4.28} holds true for  all $\lambda\in\mathbb{C}$ with $|\lambda|\geq\max\{\lambda_{0},1\}$ without the assumption that the boundary-value problem \eqref{4.24}, \eqref{4.25} is parameter-elliptic in~$K$. This is known in the $\varphi(\cdot)\equiv1$ case (see, e.g., \cite[Lemma~9.1.1]{Roitberg96}) and is therefore substantiated for every $\varphi\in\Upsilon$ in the proof of assertion~(ii).
\end{remark}

\begin{remark}\label{Roitberg-parameter-spaces}
Let $s\in\mathbb{R}$, $p\in(1,\infty)$, and $1\leq n\in\mathbb{Z}$. Roitberg \cite[Subsections 1.13.1 and 1.13.3]{Roitberg96} defines equivalent parameter-dependent norms in the spaces $H^{s}_{p}(\mathbb{R}^{n})$ and $B^{s}_{p}(\mathbb{R}^{n})$ as follows:
\begin{equation*}
\|w,H^{s}_{p}(\mathbb{R}^{n})\|_{\varrho}:=
\|\mathcal{F}^{-1}[(1+|\xi|^{2}+\varrho^{2})^{s/2}(\mathcal{F}w)(\xi)],
L_{p}(\mathbb{R}^{n})\|,
\end{equation*}
where $w\in H^{s}_{p}(\mathbb{R}^{n})$, and
\begin{equation*}
\|w,B^{s}_{p}(\mathbb{R}^{n})\|_{\varrho}:=
\|\mathcal{F}^{-1}[(1+|\xi|^{2}+\varrho^{2})^{s/2}(\mathcal{F}w)(\xi)],
B^{0}_{p}(\mathbb{R}^{n})\|,
\end{equation*}
where $w\in B^{s}_{p}(\mathbb{R}^{n})$; here, $\varrho$ is a positive parameter. These norms induce equivalent para\-me\-ter-depen\-dent norms in the spaces $H^{s}_{p}(\Omega)$, with $s\geq0$, and $B^{s}_{p}(\Gamma)$, with $s\in\mathbb{R}$, in the same way as  \eqref{Sobolev-norm-domain} and \eqref{Besov-norm-boundary}, resp \cite[Subsections 1.13.5 and 1.13.6]{Roitberg96}.
If $s>0$, then there exists a number $c\geq1$ such that
\begin{equation}\label{equivalence-norms-parameter}
c^{-1}\|w,E^{s}_{p}(\mathbb{R}^{n},\varrho)\|\leq
\|w,E^{s}_{p}(\mathbb{R}^{n})\|_{\varrho}\leq c\,
\|w,E^{s}_{p}(\mathbb{R}^{n},\varrho)\|
\end{equation}
for all $w\in E^{s}_{p}(\mathbb{R}^{n})$ and $\varrho\geq1$; here, $c$ does not depend on $w$ and $\varrho$, whereas $E$ denotes either $H$ or $B$. This is shown in \cite[p.~170, formula~(1.6)]{GrubbKokholm93} for $E=H$ (Sobolev spaces) and is analogously proved for $E=B$ (Besov spaces) with the help of Mihlin's criterion for a function to be a Fourier multiplier in $L_{p}(\mathbb{R}^{n})$ (and hence in Sobolev spaces and then in Besov spaces by the real interpolation between Sobolev spaces). Of course, property \eqref{equivalence-norms-parameter} implies its analogs for the parameter-dependent norms in $H^{s}_{p}(\Omega)$ and $B^{s}_{p}(\Gamma)$.
\end{remark}

\appendix

\section{}\label{appendix}

We recall the definition and some properties of the quadratic interpolation with function parameter between Hilbert spaces. This interpolation is systematically used in the paper together with the well-known complex interpolation (with number parameter) between Banach spaces. Such a quadratic interpolation was introduced by Foia\c{s} and Lions \cite[p.~278]{FoiasLions61}. Expounding it, we mainly follow the monograph \cite[Section~1.1]{MikhailetsMurach14}. It is sufficient for our purposes to restrict ourselves to separable Hilbert spaces.

Let $E_{0}$ and $E_{1}$ be separable complex Hilbert spaces such that $E_{1}$ is a dense linear manifold in $E_{0}$ and that $\|w,E_{0}\|\leq c\,\|w,E_{1}\|$ for every vector $w\in E_{1}$, with the number $c>0$ not depending on~$w$ (in other words, the continuous dense embedding $E_{1}\hookrightarrow E_{0}$ holds true). The ordered pair $(E_{0},E_{1})$ of such spaces is called regular. For it, there exists a unique positive-definite self-adjoint operator $J$ given in the Hilbert space $E_{0}$ and such that $E_{1}$ is the domain of $J$ and that $\|Jw,E_{0}\|=\|w,E_{1}\|$ for every $w\in E_{1}$.

Let $\mathcal{B}$ denote the set of all Borel measurable functions
$\psi:(0,\infty)\rightarrow(0,\infty)$ such that $\psi$ is bounded on each compact interval $[a,b]$, with $0<a<b<\infty$, and that $1/\psi$ is bounded on every set $[r,\infty)$, with $r>0$. Given $\psi\in\mathcal{B}$ and applying the spectral theorem to the self-adjoint operator $J$, we obtain the (generally, unbounded) operator $\psi(J)$ in $E_{0}$. Let $(E_{0},E_{1})_{\psi}$ denote the domain of $\psi(J)$ endowed with the norm $\|u,(E_{0},E_{1})_{\psi}\|:=\|\psi(J)u,E_{0}\|$. The space $(E_{0},E_{1})_{\psi}$ is Hilbert and separable with respect to this norm and is continuously embedded in $E_0$.

We call a function $\psi\in\mathcal{B}$ an interpolation parameter if the following condition is satisfied for all regular pairs $(E_{0},E_{1})$ and $(G_{0},G_{1})$ of Hilbert spaces and for an arbitrary linear mapping $T$ given on whole $E_{0}$: if
\begin{equation}\label{restriction-T-bounded}
\mbox{the restriction of $T$ to $E_{j}$ is a bounded operator
$T:E_{j}\to G_{j}$ for each $j\in\{0,1\}$},
\end{equation}
then the restriction of $T$ to $(E_{0},E_{1})_{\psi}$ is a bounded operator from $(E_{0},E_{1})_{\psi}$ to $(G_{0},G_{1})_{\psi}$. We say in this case that $(E_{0},E_{1})_{\psi}$ is obtained by the quadratic interpolation with the function parameter $\psi$ between the spaces $E_{0}$ and $E_{1}$ and that $T:(E_{0},E_{1})_{\psi}\to (G_{0},G_{1})_{\psi}$ is the result of this interpolation applied to the operators $T:E_{j}\to G_{j}$ with $j\in\{0,1\}$.

A function $\psi\in\mathcal{B}$ is an interpolation parameter if and only if $\psi$ is pseudoconcave on the set $(r,\infty)$ for certain $r>0$, i.e. there exists a positive concave function $\psi_{1}(t)$ of $t\gg1$ such that
the functions $\psi/\psi_{1}$ and $\psi_{1}/\psi$ are bounded on this set. This fundamental fact follows from Peetre's \cite{Peetre68} description of all interpolation functions of positive order \cite[Theorem~1.9]{MikhailetsMurach14}. Specifically, the function $\psi(t):=t^{\sigma}\psi_{0}(t)$ of $t\geq1$ is an interpolation parameter provided that the number $\sigma$ satisfies $0<\sigma<1$ and that the function $\psi_{0}:[1,\infty)\to(0,\infty)$ is Borel measurable and varies slowly at infinity (in the sense of Karamata) \cite[Theorem~1.11]{MikhailetsMurach14}.

The norm of a linear operator obtained by the quadratic interpolation admits the following estimate \cite[Theorem~1.8]{MikhailetsMurach14}: for every interpolation parameter $\psi\in\mathcal{B}$ and each number $\nu>0$ there exists a number $c=c(\psi,\nu)>0$ such that
\begin{equation*}
\|T:(E_{0},E_{1})_{\psi}\to (G_{0},G_{1})_{\psi}\|\leq
c\,\max\bigl\{\|T:E_{j}\to G_{j}\|:j\in\{0,1\}\bigr\}.
\end{equation*}
Here, $(E_{0},E_{1})$ and $(G_{0},G_{1})$ are arbitrary regular pairs of Hilbert spaces for which the norms of the embedding operators $E_{1}\hookrightarrow E_{0}$ and $G_{1}\hookrightarrow G_{0}$ do not exceed $\nu$, whereas $T$ is an arbitrary linear mapping given on $E_{0}$ and satisfying \eqref{restriction-T-bounded}. This fact is used in the proof of assertion~(ii) of Theorem~\ref{th4.9} as well as the following estimate of the norm of an operator  obtained by the complex interpolation with a number parameter $\theta\in(0,1)$:
\begin{equation}\label{estimate-norms-C-interp}
\|T:[E_{0},E_{1}]_{\theta}\to[G_{0},G_{1}]_{\theta}\|\leq
\|T:E_{0}\to G_{0}\|^{1-\theta}\,\|T:E_{1}\to G_{1}\|^{\theta}.
\end{equation}
Here,  $(E_{0},E_{1})$ and $(G_{0},G_{1})$ are arbitrary interpolation pairs of Banach spaces, whereas $T$ is an arbitrary linear mapping acting from whole $E_{0}+E_{1}$ to $G_{0}+G_{1}$ and satisfying \eqref{restriction-T-bounded}. This estimate means that the interpolation functor $[\cdot,\cdot]_{\theta}$ is exact of type $\theta$ (see, e.g., \cite[Theorem 1.9.3(a) and Definition 1.2.2/2]{Triebel95}). This property is also used in the proof of Lemma~\ref{lemma4.2}.

As to interpolation of Fredholm bounded operators, we note that a version of Proposition~\ref{prop-interpolation-Fredholm} is valid for the quadratic interpolation with function parameter \cite[Theorem~1.7]{MikhailetsMurach14}.

\subsection*{Funding details.} The first named author was funded by the German Research Foundation (DFG), project 530831274 (https://gepris.dfg.de/gepris/projekt/530831274). The second named author was funded by the National Academy of Sciences of Ukraine, a grant from the Simons Foundation (1030291, A.A.M., and 1290607, A.A.M.), and Universities-for-Ukraine Non-residential Fellowship granted by UC Berkeley Economics/Haas. The second named author was also supported by the European Union's Horizon 2020 research and innovation programme under the Marie Sk{\l}odowska-Curie grant agreement No~873071 (SOMPATY: Spectral Optimization: From Mathematics to Physics and Advanced Technology).

\subsection*{Disclosure statement.} The authors report there are no competing interests to declare.

\end{document}